\documentclass[11pt,a4paper]{article}
\usepackage{graphicx} 

\usepackage{amsmath,amssymb,amsthm,color,bm,bbm,array,mathtools,mathrsfs}
\usepackage{soul}
\usepackage[left=1in,right=1in,top=1in,bottom=1in]{geometry}
\usepackage[british]{babel}
\usepackage{csquotes}
\usepackage{hyperref} 
\usepackage[shortlabels]{enumitem}
\usepackage{xcolor}
\usepackage{rotating}

\hypersetup{
    colorlinks=false,
    pdfborder={0 0 0},
}

\usepackage[style=ext-numeric-comp,backend=biber,giveninits=true,isbn=false,url=false,doi=false,articlein=false,maxbibnames=9]{biblatex}
\DeclareFieldFormat[article]{title}{#1} 
\DeclareFieldFormat[phdthesis]{title}{#1} 
 \AtEveryBibitem{%
  \clearfield{number}}
  
\usepackage{tikz,pgfplots}
\usetikzlibrary{decorations.markings}
\usetikzlibrary{decorations.pathreplacing}
\tikzset{mynode/.style = {inner = sep = 2pt, fill, outer sep =0, circle}}
\usetikzlibrary{plotmarks}
\usetikzlibrary{patterns}
\usetikzlibrary{calc}
\usetikzlibrary{shapes,arrows}
\usetikzlibrary{fadings}
\makeatletter
\pgfdeclareverticalshading{pgf@lib@fade@north}{100bp}
{color(0bp)=(pgftransparent!0);
 color(5bp)=(pgftransparent!10);
 color(60bp)=(pgftransparent!100); color(80bp)=(pgftransparent!100)}%
\pgfdeclarefading{myfade}{%
  \pgfuseshading{pgf@lib@fade@east}%
}

\usepgfplotslibrary{polar}
\pgfplotsset{ticks=none}
\pgfplotsset{compat=1.18}

\usepackage{titlesec}
\titleformat{\paragraph}[runin]{\normalfont\normalsize\itshape}{\theparagraph}{1em}{}
\titlespacing*{\paragraph}{0pt}{3.25ex plus 1ex minus .2ex}{1em}

\newtheorem{theorem}{Theorem}[section]
\newtheorem{corollary}[theorem]{Corollary}
\newtheorem{proposition}[theorem]{Proposition}
\newtheorem{lemma}[theorem]{Lemma}
\newtheorem{conjecture}[theorem]{Conjecture}

\numberwithin{equation}{section}
\theoremstyle{definition}
\newtheorem{definition}[theorem]{Definition}

\theoremstyle{remark}
\newtheorem{remark}[theorem]{Remark}

\newtheorem*{remark*}{Remark}

\newcommand{\1}[1]{{\mathbbm{1}\mkern -1.5mu}{\{#1\}}}
\newcommand{\ind}{\mathbbm{1}}
\newcommand{\2}[1]{{\mathbbm{1}}_{#1}}

\newcommand{\R}{{\mathbb R}}

\newcommand{\Z}{{\mathbb Z}}
\newcommand{\N}{{\mathbb N}}
\newcommand{\ZP}{{\mathbb Z}_+}
\newcommand{\RP}{{\mathbb R}_+}

\DeclareMathOperator{\Exp}{\mathbb{E}}
\let\Pr\relax
\DeclareMathOperator{\Pr}{\mathbb{P}}

\DeclareMathOperator{\Var}{\mathbb{V}ar}
\DeclareMathOperator{\Cov}{\mathbb{C}ov}

\DeclareMathOperator{\card}{\#}

\newcommand{\Poi}{\mathrm{Po}\,}
\newcommand{\unif}[2]{\mathrm{Unif}\left( {#1}, {#2} \right)}

\newcommand{\eqd}{\overset{\textup{d}}{=}}

\newcommand{\eps}{\varepsilon}

\newcommand{\re}{{\mathrm{e}}}
\newcommand{\ud}{{\mathrm d}}
\newcommand{\rc}{{\mathrm{c}}}

\newcommand{\cB}{{\mathcal B}}

\newcommand{\cC}{{\mathcal C}}
\newcommand{\cCtp}{\cC_{\theta,\phi}}
\newcommand{\cCo}{\cC^\circ}
\newcommand{\cCotp}{\cCo_{\theta,\phi}}
\newcommand{\cD}{{\mathcal D}}
\newcommand{\cDtp}{\cD_{\theta,\phi}}
\newcommand{\cDtpl}{\cD^{(\lambda)}_{\theta,\phi}}

\newcommand{\cL}{{\mathcal L}}
\newcommand{\tcL}{\widetilde{\cL}}
\newcommand{\cLdlf}[1]{\mathop{\cL_{\mathrm{DLF}}}\left( {#1} \right)}

\newcommand{\cN}{{\mathcal N}}
\newcommand{\cP}{{\mathcal P}}
\newcommand{\cQ}{{\mathcal Q}}
\newcommand{\cR}{{\mathcal R}}
\newcommand{\cS}{{\mathcal S}}

\newcommand{\cU}{{\mathcal U}}
\newcommand{\cV}{{\mathcal V}}
\newcommand{\cW}{{\mathcal W}}
\newcommand{\cX}{{\mathcal X}}
\newcommand{\cY}{{\mathcal Y}}

\newcommand{\tJ}{\tilde J}
\newcommand{\bulk}{\mathrm{blk}}
\newcommand{\bndry}{\mathrm{bdy}}
\newcommand{\intm}{\mathrm{itm}}
\newcommand{\cRbulk}{{\cR}_\lambda^{\bulk}}
\newcommand{\cRbndry}{{\cR}_\lambda^{\bndry}}
\newcommand{\cRint}{{\cR}_\lambda^{\intm}}
\newcommand{\cLbulk}{{\cL}_\lambda^{\bulk}}
\newcommand{\cLbndry}{{\cL}_\lambda^{\bndry}}
\newcommand{\cLint}{{\cL}_\lambda^{\intm}}
\newcommand{\tcLbulk}{\widetilde{\cL}_\lambda^{\bulk}}
\newcommand{\tcLbndry}{\widetilde{\cL}_\lambda^{\bndry}}
\newcommand{\tcLint}{\widetilde{\cL}_\lambda^{\intm}}

\newcommand{\0}{\boldsymbol{0}}
\newcommand{\bu}{\boldsymbol{u}}

\newcommand{\bw}{\boldsymbol{w}}
\newcommand{\bx}{\boldsymbol{x}}
\newcommand{\by}{\boldsymbol{y}}
\newcommand{\bz}{\boldsymbol{z}}

\newcommand{\Eo}{E^\mathrm{o}}
\newcommand{\Ea}{E^\mathrm{a}}

\newcommand{\TT}{\mathbb{T}}
\newcommand{\tDelta}{{\widetilde \Delta}}

\newcommand{\toP}{\overset{\mathrm{P}}\to}

\newcommand{\tod}{\overset{\mathrm{d}}\to}
\newcommand{\toL}[1]{\overset{L^{{#1}}}\to}

\DeclarePairedDelimiter\floor{\lfloor}{\rfloor}

\DeclareMathOperator*{\dprime}{\prime \prime}

\newlist{myenumi}{enumerate}{10}
\setlist[myenumi]{leftmargin=0pt, labelindent=\parindent, listparindent=\parindent, labelwidth=0pt, itemindent=!, itemsep=1pt, parsep=4pt}

\newlist{thmenumi}{enumerate}{10}
\setlist[thmenumi]{leftmargin=0pt, labelindent=\parindent, listparindent=\parindent, labelwidth=0pt, itemindent=!}

\title{Gaussian and non-Gaussian limits for the total edge-length of random directional nearest-neighbour graphs}

\author{Nicholas Georgiou\footnote{Department of Mathematical Sciences, 
Durham University, 
United Kingdom.} 
\and 
Qasem M.~Tawhari\footnote{Department of Mathematics,
Faculty of Science,
Jazan University, 
Saudi Arabia.} 
\and 
Andrew Wade\footnotemark[1]}

\begin{document}

\maketitle

\begin{abstract}  
We study planar random spatial graphs defined by inserting an edge between every point
of a homogeneous Poisson process on the unit square and 
a nearest-neighbour within a fixed cone of directions (if there is such a point). We prove large-sample distributional limit theorems for the total Euclidean edge length of the graph, where the limit distribution is either Gaussian (due to contributions from the bulk) or, when one or both of the boundaries of the cone of directions align with the edges of the square, a convolution of Gaussian with a non-Gaussian law arising from boundary effects due to long edges near a boundary.
This work extends previously known cases, including the ordinary nearest-neighbour graph (full cone of directions), where the Gaussian limit theorem
was obtained by Bickel \& Breiman and Avram \& Bertsimas, 
and Bhatt \& Roy's  minimal directed spanning forest 
(with `south-west' directions), where the non-Gaussian limit theorem was obtained by Penrose \& Wade.
\end{abstract}

\medskip

\noindent
{\em Key words:} Minimal directed spanning tree; random spatial graph; nearest neighbour; central limit theorem.

\medskip

\noindent
{\em AMS Subject Classification:} 60D05 (Primary) 60F05 (Secondary).

\tableofcontents

\section{Introduction}
\label{sec:intro}

\subsection{Overview}
\label{sec:overview}

An established branch of spatial probability is concerned with graphs whose vertices are located at random points distributed in Euclidean space, and whose edges are constructed using a geometrical rule based on proximity of vertices.
Examples of such graphs include the nearest-neighbour graph, 
the optimal Euclidean travelling salesman tour,
the Euclidean minimal spanning tree, 
and various other geometric graphs.
These types of graphs, collectively known as  spatial graphs, 
are in part motivated by problems in applied probability,
since in real-world networks, nodes often carry spatial information relevant
 to a network's structure or properties, for example, in communications,
 transport, and geographical networks. It is natural to study simple models of \emph{random} spatial networks, for
 inference or prediction, for insight into typical behaviour, or for
 assessing performance of algorithms or processes that take place on
 networks. We refer to~\cite{steele-book,yukich-book,penrose-book} for relevant monographs, and~\cite{kesten-lee,bhh,aldous-steele,penrose-yukich-lln,penrose-yukich-clt,bary-yukich,trauthwein,lpy} for a small selection from the large literature.

One of the most fundamental spatial graphs is the nearest-neighbour graph (NNG), which is a directed graph that connects each point to its nearest neighbour. Nearest-neighbour graphs and distances have numerous applications in
 statistics, machine learning, computational geometry, ecology, and so on, for example in classification and clustering. The subject of this paper is
a variant of the ordinary NNG that introduces a restriction on the possible spatial directions of edges. Applications and properties of constrained nearest-neighbour graphs have been studied
over several decades, an early example being Gilbert's ``exodic tree''~\cite{gilbert65} in which edges are directed from a point to its nearest neighbour that is closer to the origin, a model that has recently
been studied in depth under the name \emph{radial spanning tree}~\cite{baccelli-bordenave,schulte-thale}.  Other examples of constrained nearest-neighbour graphs include~\cite{smith,manna2004scale,manna-sen,nandi-manna}.

The specific model we consider 
is the \emph{minimal directed spanning forest} (MDSF), introduced by Bhatt and Roy~\cite{bhatt-roy},
as a potential model for constrained telecommunications or  drainage networks, 
and subsequently studied in~\cite{blp,pw-dickman,pw-south-west,pw-survey,wade-llns,pw-south,bhattacharjee}.
In the version of the MDSF that we study here, the directional constraint is specified by a permitted \emph{cone} of directions, and each point is joined by a (directed) edge to its nearest neighbour restricted to lie within the permitted cone.

We take the underlying (random) vertex set to be a homogeneous, intensity-$\lambda$ Poisson point process on the unit square $[0,1]^2$ in the plane. Our quantity of interest is the
\emph{total Euclidean edge-length} $L_\lambda$ of the MDSF constructed on the Poisson points, that is, the sum of the Euclidean lengths corresponding to each edge in the graph. 
Our goal is to study distributional asymptotics for the total edge-length, in the $\lambda \to \infty$ limit. For the ordinary NNG (with no constraints) a classical result due to
Bickel and Breiman~\cite{bickel-breiman} and Avram and Bertsimas~\cite{avram-bertsimas} says that $L_\lambda - \Exp L_\lambda$ has a Gaussian limit as $\lambda \to \infty$
(an alternative approach was also proposed by Ramey~\cite{ramey}). Underlying this result is the fact that contributions from individual edge-lengths are asymptotically negligible, and there is a \emph{local dependence} in that edge-lengths associated with distant vertices are almost independent.

Introducing directional constraints leads to new phenomena, 
due to the possibility of long edges near to the boundaries of the domain, when those boundaries and the directional constraints align. One sees this, for example, in Bhatt and Roy's original ``south-west'' MDSF~\cite{bhatt-roy}, in which the permitted cone is a quadrant. Figure~\ref{fig:nng-mdsf} gives a picture. Here, due to the boundary effects, a result of Penrose and Wade~\cite{pw-south-west} shows that $L_\lambda - \Exp L_\lambda$ has a distributional limit as $\lambda \to \infty$ which is \emph{non-Gaussian}, but which can be decomposed as an independent sum of a Gaussian component (associated with contributions in the ``bulk'') and a non-Gaussian component that is characterized by a distributional fixed-point equation associated with a one-dimensional approximation in the neighbourhood of the boundary.
The situation is similar in the case where the permitted cone is a half-plane, which leads to the ``south'' MDSF~\cite{pw-south}.

\begin{figure}[!h]
\centering
\includegraphics[width=0.48\textwidth, height = 0.48\textwidth]{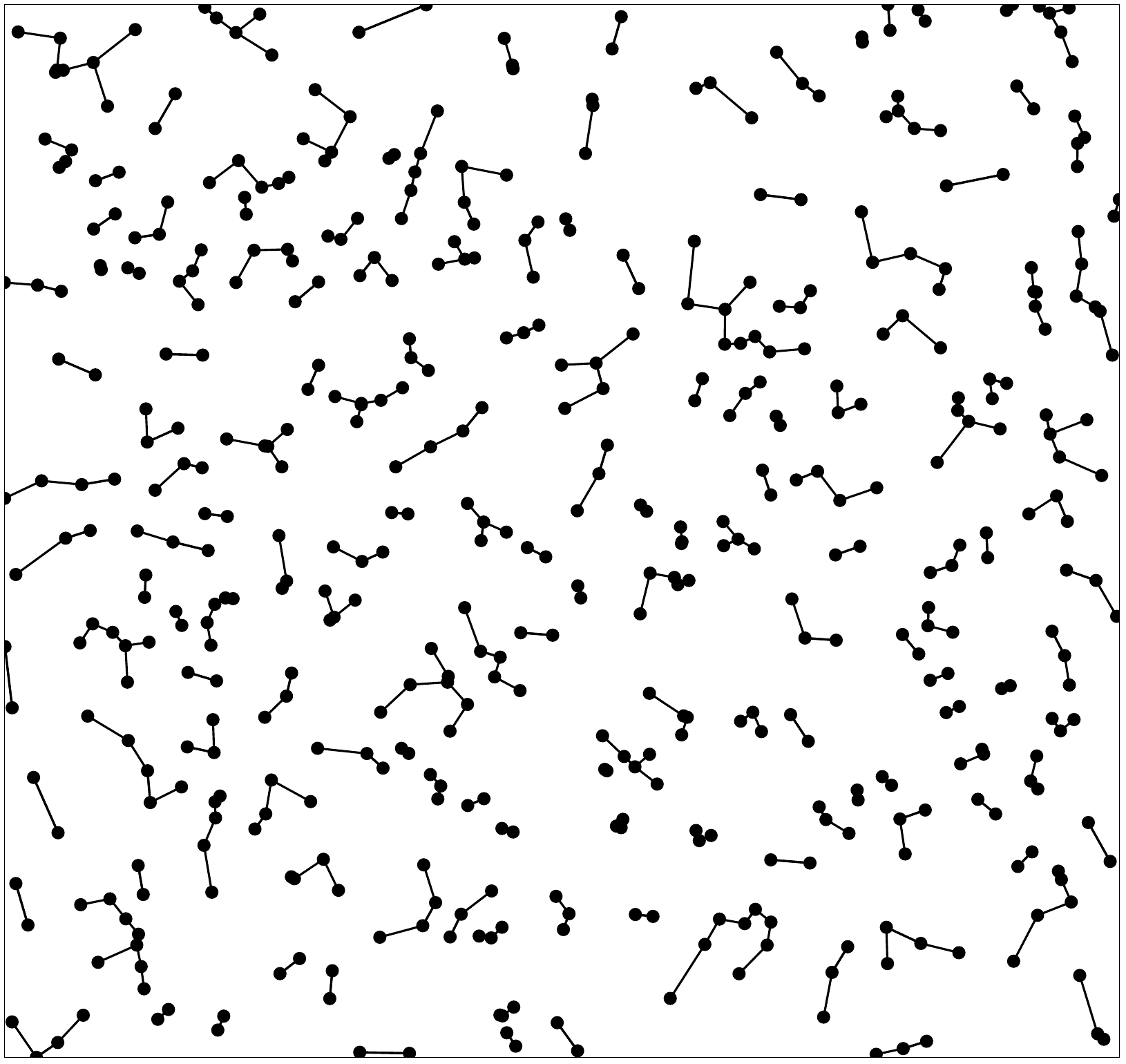}
\hfill 
\includegraphics[width=0.48\textwidth, height = 0.48\textwidth]{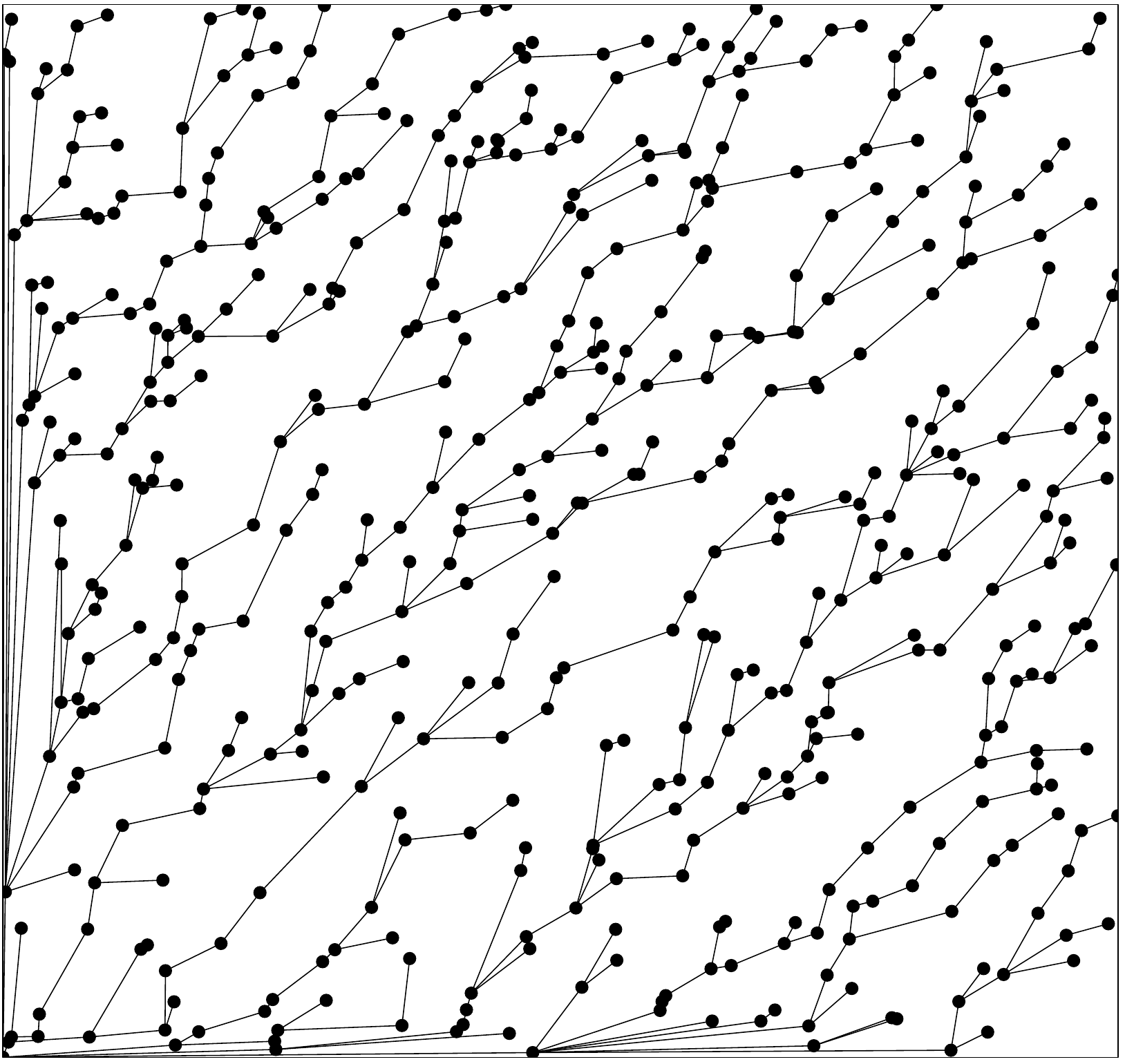}
\caption{Simulated realizations of NNG (\emph{left}) and ``south-west'' MDSF (\emph{right}) in which every point is joined by an edge to its nearest-neighbour with no restriction (\emph{left}) and with the restriction that the neighbour must be below and to the left (\emph{right}). One can see  much longer edges in the MDSF, which accounts for the disruption of Gaussian limit behaviour. Another difference is that degrees of vertices in the MDSF do not, generally, have a uniform upper bound.}
\label{fig:nng-mdsf}
\end{figure}

For general directional constraints in the plane, first-order behaviour was quantified by the law of large numbers of~\cite{wade-llns}. The goal of this paper is to establish second order (i.e., weak convergence) results for
the general case, building a framework to encompass both all the new ``non-rectangular'' cases as well as the phenomena observed in the ``south'' and ``south-west'' cases~\cite{pw-south,pw-south-west}. Our contribution is thus both to settle a conjecture posed in~\cite[\S 7.1.3]{wade-thesis}, as well as to provide a framework that may provide the basis for further extensions: see Section~\ref{sec:discussion} below for some open directions.

\subsection{Minimal directed spanning forest}
\label{sec:mdsf-cones}

The MDSF that we study can be defined on any locally finite subset of $\R^2$. The directional restriction on edges is specified via a cone of permitted directions, parametrized by two angles, $\theta \in [0, 2\pi)$ and $\phi \in (0,  \pi)$. Throughout the paper, our convention is that angles are measured 
anticlockwise from the vertically upward direction.
 
 \begin{definition}[Cone of directions] 
 \label{def:half-cone-closed}
For $\theta \in [0, 2\pi)$ and $\phi \in (0, \pi)$, let $\cCtp(\bx)$ denote the closed cone with apex at $\bx \in \R^2$
consisting of all rays from $\bx$ with angular range given by the  interval $[\theta, \theta+\phi]$. Also define $\cCotp (\bx ) := \cCtp(\bx) \setminus \{ \bx \}$, the cone excluding the apex.
 \end{definition}

\begin{definition}[Directed nearest neighbours]
\label{def:directed-nn}
Fix $\theta \in [0,2\pi)$, $\phi \in (0,\pi)$.
    Let $\cX \subseteq \R^2$ be locally finite and suppose that $\bx \in \R^2$. 
The \emph{directed nearest-neighbour distance} from $\bx$ in $\cX$ (note that $\bx$ need not be an element of $\cX$) is defined to be
\[ 
\cDtp(\bx, \cX) := \begin{cases}
  \inf\left\{\|\by-\bx\|: \by \in \cX \cap \cCotp(\bx)  \right\} & \text{if } \cX \cap \cCo_{\theta, \phi } (\bx )   \neq \emptyset, \\
  0 & \text{if } \cX \cap \cCo_{\theta, \phi } (\bx )   = \emptyset.
\end{cases}
\]
By local finiteness of $\cX$, if $\cX \cap \cCotp (\bx) \neq \emptyset$, then $\cDtp(\bx, \cX) >0$ is
    attained by at least one 
    $\by \in \cX \cap \cCotp(\bx)$,
    and we define a \emph{directed nearest neighbour}
    $N_{\theta, \phi}(\bx, \cX)$ of $\bx$ in $\cX$ to be one such $\by$, chosen arbitrarily if there is more than one. On the other hand,
    if $\cX \cap \cCotp(\bx) = \emptyset$, set 
       $N_{\theta, \phi}(\bx, \cX) := \bx$. In either case, it holds that
    \[
   \| \bx-N_{\theta, \phi}(\bx, \cX)\| = \cDtp(\bx, \cX).
    \]
\end{definition}

\begin{definition}[Minimal directed spanning forest]    
\label{def:mdsf}
Fix $\theta \in [0,2\pi)$, $\phi \in (0,\pi)$.
    Let $\cX \subseteq \R^2$ be locally finite. The MDSF on $\cX$ is the directed graph obtained by joining each point $\bx \in \cX$ by an edge to its directed nearest neighbour $N_{\theta, \phi}(\bx, \cX)$ when the latter is not equal to $\bx$.

    Let $\cL(\cX)$ be the \emph{total edge length}  of the MDSF on $\cX$, i.e.,
\begin{equation}
\label{eq:edge-length}
    \cL(\cX) := \sum_{\bx \in \cX} \cDtp(\bx,\cX) = \sum_{\bx \in \cX}\| \bx- N_{\theta, \phi}(\bx, \cX)\| .
\end{equation}
\end{definition}

Our limit theory for the MDSF will apply to all cases $\theta \in [0,2\pi)$ and $\phi \in (0,\pi)$, with different behaviour among 3 distinct classes of parameters, depending on whether two, one, or none of the extremal directions of the cone align with the directions of the sides of the square, i.e., the set $\frac{\pi}{2} \Z$. The classes of cone geometries are partitioned as follows.

\begin{definition}[Cone classification]
\label{def:cone-types}
For $\theta \in [0,2\pi)$, $\phi \in (0,\pi)$, we apply the following terminology. 
    \begin{enumerate}[label=(\alph*)]
        \item \label{def:cone-types-a} Angles $(\theta,\phi)$ are called \emph{doubly-aligned} if  $\{ \theta , \theta+\phi \} \subseteq  \frac{\pi}{2}\Z$.
        \item \label{def:cone-types-b} Angles $(\theta,\phi)$ are called \emph{singly-aligned} if exactly one of $\theta, \theta+ \phi$ belongs to $\frac{\pi}{2}\Z$.
        \item \label{def:cone-types-c} Angles $(\theta,\phi)$ are called \emph{unaligned} if neither $\theta$ nor $\theta+\phi$ belongs to $\frac{\pi}{2}\Z$.
    \end{enumerate}
\end{definition}

    Cases~\ref{def:cone-types-b} and~\ref{def:cone-types-c} together we call \emph{non-rectangular} cones, and it is these cases that are the main focus of the present paper; see Figure~\ref{fig:mdsf-examples} for some examples. 
    The doubly aligned case was the subject of~\cite{pw-south-west}, while~\cite{pw-south} dealt with the aligned case where $\phi = \pi$.

 \begin{figure}[!h]
\centering
\includegraphics[width=0.32\textwidth]{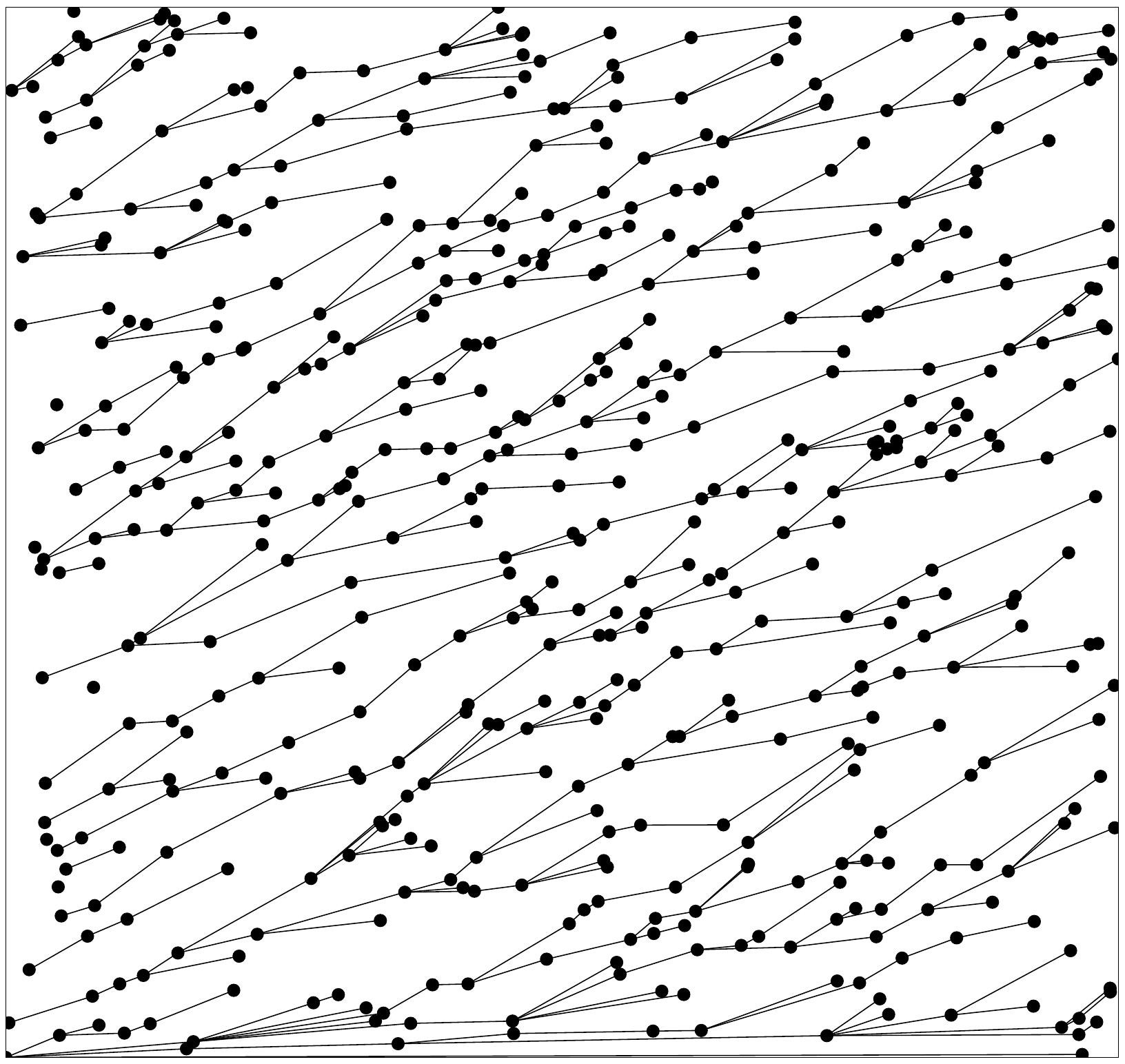}
\,
\includegraphics[width=0.32\textwidth]{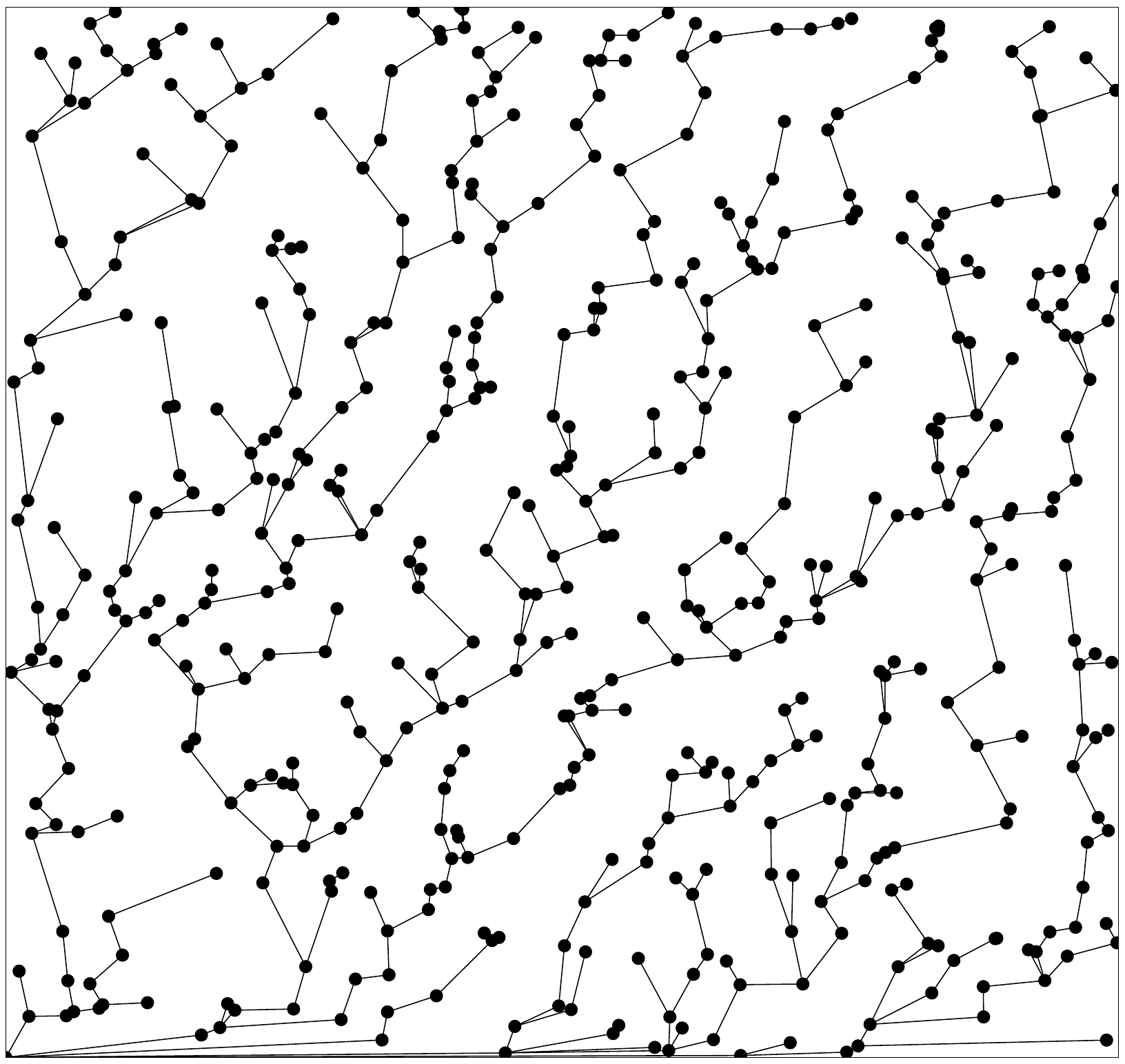}
\,
\includegraphics[width=0.32\textwidth]{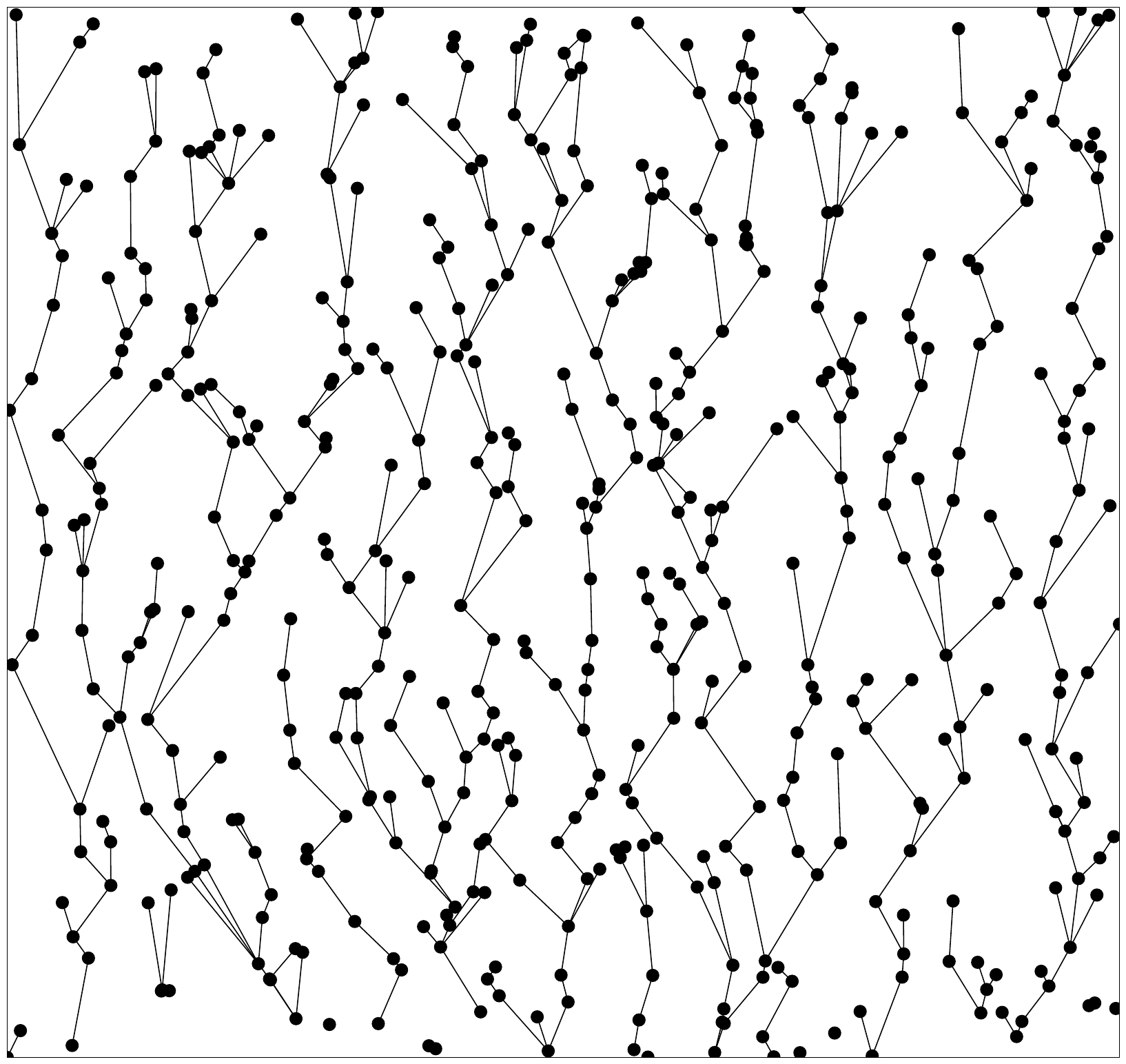}
\caption{Simulated realizations of the MDSF on random points for two singly-aligned cases, the acute cone $(\theta,\phi) = (\frac{\pi}{2},\frac{\pi}{4})$ (\emph{left}) and the obtuse cone $(\theta,\phi) = (\frac{\pi}{2},\frac{3\pi}{4})$ (\emph{middle}), as well as an unaligned case $(\theta,\phi) = (\frac{\pi}{4},\frac{\pi}{2})$ (\emph{right}). In the first two pictures, atypically long edges occur near the bottom edge of the square, as directed neighbours can be much farther away than one would see in the bulk.}
\label{fig:mdsf-examples}
\end{figure}

\begin{remark}[Partial order description]
    \label{rem:partial-order}
Under the given restriction on $\theta, \phi$, there  is a correspondence between the cone description of the MDSF and an equivalent description of the MDSF in terms of a \emph{partial order}. Define relation ``$\preccurlyeq$'' by declaring that 
$\bx \preccurlyeq \by$ if and only if
$\bx \in \cCtp(\by)$. Then $\preccurlyeq$ is  a partial order on $\R^2$ if and only if $0 < \phi < \pi$,
and, for $\phi = \pi$, is also a partial order on subsets of $\R^2$ with distinct projections onto every  direction normal to the boundary; this latter case includes the ``south'' MDSF studied in~\cite{pw-south}. A more general formulation of the MDSF is to permit only  edges that respect a given partial order: see~\cite{bhatt-roy,wade-thesis,pw-survey} for more details. 
\end{remark}

\subsection{Main results}
\label{sec:results}

Let $\cP_\lambda$ denote
a homogeneous Poisson point process on $[0,1]^2$ of intensity $\lambda >0$. 
With $\cL (\cP_\lambda)$ denoting the total edge length of the MDSF, as defined at~\eqref{eq:edge-length}, 
our object of study here is the $\lambda \to \infty$ behaviour of the centred random variable $\tcL_{\lambda} := \cL(\cP_{\lambda})- \Exp \cL(\cP_{\lambda})$. 

Our main results are parts~\ref{thm:generalresult-unaligned} and~\ref{thm:generalresult-singly-aligned} of the following theorem;
part~\ref{thm:generalresult-doubly-aligned}
is due to Penrose and Wade~\cite{pw-south-west} and is included here for completeness. 
We have not included in Theorem~\ref{thm:generalresult} the other previously-known special case, the ``south'' partial order when $\phi = \pi$ and $\theta \in \frac{\pi}{2} \Z$, since in that case it is not the distribution $\cQ$ that appears, but a bi-directional relative~\cite{pw-south}.
We write $\cN (0,1)$ for the standard normal distribution on $\R$,
and use ``$\tod$'' for convergence in distribution.

\begin{theorem}
\label{thm:generalresult}
Denote by
 $\cQ$ the (non-Gaussian) distribution on $\R$ characterized
    as the unique mean-zero, finite-variance solution to~\eqref{eq:fixed-point} below. 
Let $\theta \in [0,2\pi)$, $\phi \in (0,\pi)$.
    There exist  constants $s_{\phi} \in (0,\infty)$, depending only on $\phi$,
    such that if $Q_1$, $Q_2$, $Z$ are mutually independent with $Q_1, Q_2 \sim \cQ$ and $Z \sim \cN(0,1)$, then the following distributional limits hold. 
 \begin{thmenumi}[label=(\roman*)]
        \item 
        \label{thm:generalresult-unaligned} 
        If $(\theta, \phi)$ is unaligned, then, as $\lambda \to \infty$,
        \begin{align}
           \tcL_\lambda\tod s_{\phi}Z.
        \end{align}
          \item 
        \label{thm:generalresult-singly-aligned} 
        If $(\theta, \phi)$ is singly-aligned, then, as $\lambda \to \infty$, 
        \begin{align}
        \label{eq:generalresult-singly-aligned} 
           \tcL_\lambda\tod s_{\phi}Z+Q_{1}.
        \end{align}
         \item
\label{thm:generalresult-doubly-aligned} 
(Penrose and Wade~\cite{pw-south-west}.)
If $(\theta, \phi)$ is doubly-aligned, then, as $\lambda \to \infty$, 
        \begin{align}
           \tcL_\lambda\tod s_{\phi}Z+Q_{1}+Q_{2}.
        \end{align}
    \end{thmenumi}
\end{theorem}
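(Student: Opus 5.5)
Our plan is to decompose the square into a bulk region and thin strips along the sides, and to treat their contributions separately. Fix a width $\delta_\lambda$ with $\lambda^{-1/2} \ll \delta_\lambda \ll 1$; for definiteness take $\delta_\lambda = \lambda^{-1/2}\log^2\lambda$. A point contributes to the boundary part $\cLbndry$ if it lies within $\delta_\lambda$ of a side $S$ whose direction is one of the extremal directions $\theta$ or $\theta+\phi$ of the cone, and the cone at such points points along $S$. All other points, including those near the remaining sides and corners, contribute to the bulk part $\cLbulk$, with an intermediate part $\cLint$ mopping up the corner regions. Then $\tcL_\lambda = \tcLbulk + \tcLbndry + \tcLint$, and we will show the following three facts:
\begin{enumerate}[label=(\alph*)]
\item $\tcLint \to 0$ in $L^2$;
\item $\tcLbulk \tod s_\phi Z$;
\item $\tcLbndry \tod Q_1$ for each aligned side, with distinct aligned sides asymptotically independent of each other and of the bulk.
\end{enumerate}
The number of aligned sides is zero, one or two in the unaligned, singly- and doubly-aligned cases, which gives the three parts of Theorem~\ref{thm:generalresult}. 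Part~\ref{thm:generalresult-doubly-aligned} is \cite{pw-south-west}, but it also falls out of the same scheme.

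\emph{Bulk.} For a point away from any aligned side, the directed nearest neighbour lies at distance $O(\lambda^{-1/2}\log\lambda)$ with overwhelming probability. This holds even near an unaligned side, because the cone, being non-aligned with that side, still contains a sector of positive angle inside the square. Hence $\cDtp$ is exponentially stabilizing in the sense of Penrose--Yukich, with moments of all orders. We then apply the general CLT for stabilizing functionals of Poisson processes, via the Baryshnikov--Yukich/Penrose--Yukich theorem or Last--Peccati--Schulte second-order Poincaré bounds, to get Gaussian fluctuations. The limiting variance is $s_\phi^2 = \lim \Var \cLbulk$, which is an integral of a translation-invariant covariance on $\R^2$ that, since the functional is rotation-covariant, depends only on the cone's opening angle $\phi$. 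Positivity of $s_\phi$ will follow from the standard argument that the add-one cost is non-degenerate: a local configuration can change the total length by a non-constant amount. The boundary-layer part of the bulk has volume $O(\delta_\lambda)$ and its variance is negligible.

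\emph{Boundary.} Near an aligned side, say the bottom side with extremal direction horizontal, the cone at a point $(x,y)$ with $y \ll \lambda^{-1/2}$ contains only a thin wedge of the square along the side. Its nearest neighbour is then typically at distance of order $1$ rather than $\lambda^{-1/2}$. Rescaling the vertical coordinate by $\lambda$ so that the strip looks one-dimensional, the edges become approximately those of a one-dimensional ``nearest point to the left'' structure on the projected points, with the correct weights. We will couple $\cLbndry$ to the corresponding one-dimensional functional. We then use the self-similarity of that one-dimensional model: splitting $[0,1]$ at a uniform-type first point yields the recursion \eqref{eq:fixed-point}. The contraction method of Rösler, as in \cite{pw-south-west,pw-south}, gives convergence to $\cQ$ in the Zolotarev or Wasserstein metric; alternatively we can quote the one-dimensional limit of \cite{pw-south-west} directly once the coupling error is shown to be $o(1)$ in $L^2$. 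Singly-aligned acute and obtuse cones both reduce to the same one-sided one-dimensional model, since only the aligned ray matters near the side. This is the reason only $\cQ$ appears.

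\emph{Independence and the main obstacle.} Asymptotic independence holds because the bulk and boundary functionals are determined, up to $o(1)$ in $L^2$, by Poisson points in disjoint regions: the bulk variance comes from points at distance $\gg \delta_\lambda$ from the side, and the boundary limit from points at distance $O(\lambda^{-1})$ to $o(\delta_\lambda)$. We will make this precise by showing that the covariance between the layers vanishes. The main obstacle is the intermediate regime, namely points at heights between about $\lambda^{-1}$ and $\delta_\lambda$ above an aligned side. There the cone wedge inside the square has angle about $\lambda^{1/2} y$, so edges have length of order $(\lambda y)^{-1/2}$, which interpolates between the two scales. We must show that their total fluctuation has vanishing variance. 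We will attempt this by a direct Efron--Stein or Poincaré variance bound, integrating the second moment of the add-one cost over $y$. The required integral is $\int_{\lambda^{-1}}^{\delta_\lambda} \lambda \cdot \lambda^{-1}y^{-1}\,\ud y$-type, and a logarithmic divergence would have to be killed by the correlation decay. This bound is the delicate step. It also requires handling the angle between the non-aligned ray and the side, which is specific to the non-rectangular cones.
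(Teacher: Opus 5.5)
You have the paper's architecture: a stabilization CLT for the bulk, a coupling of a boundary strip to Penrose--Wade's directed linear forest, and an independence step. The bulk step is fine. But the geometry of the boundary layer is wrong, and this breaks both the coupling and the intermediate estimate.

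For $\bx=(x_1,y)$ near the aligned bottom side, $\cCtp(\bx)\cap[0,1]^2$ is, up to a triangle of diameter $O(y)$, the horizontal strip $[0,x_1]\times[0,y]$. It is not a wedge of angle $\lambda^{1/2}y$. Its area within distance $r\gg y$ is $\asymp ry$, so for $\lambda^{-1}\ll y\ll\lambda^{-1/2}$ the edge length is $\asymp 1/(\lambda y)$, not $(\lambda y)^{-1/2}$. Equivalently, the point of rank $i$ in height order has edge $\asymp 1/i$, so this whole range is DLF territory. Two things follow.
\begin{itemize}
\item[(1)] Your width $\delta_\lambda=\lambda^{-1/2}\log^2\lambda$ is too large for the coupling. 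The $\asymp\lambda\delta_\lambda$ strip points above height $\lambda^{-1/2}$ have genuinely two-dimensional edges of length $\asymp\lambda^{-1/2}$, while their DLF edges are $\asymp 1/(\lambda y)$. So $\cLbndry-\cLdlf{\cV_\lambda}$ has mean of order $\log^2\lambda$, not $o(1)$ in $L^2$. The coupling only works for a strip of height $\lambda^{-\sigma}$ with $\sigma>1/2$. There, off the rare events that the MDSF neighbour lies in the thin triangle $\Eo_i$ (obtuse) or the DLF neighbour lies in $\Ea_i$ (acute), one has $|\Delta_i|=O(\lambda^{-\sigma})$ with $O(\lambda^{1-\sigma})$ points. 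The rare events are controlled by $\Pr(A_i)=O(i\lambda^{-\sigma})$ and spacing bounds.
\item[(2)] The layer you call the main obstacle, heights from about $\lambda^{-1}$ to $\delta_\lambda$, contains exactly the DLF points whose fluctuations produce $Q$. Theorem~\ref{thm:PW-DLF} needs the number of points to diverge, and only $O(1)$ points lie at height $O(\lambda^{-1})$. So you cannot expect its variance to vanish near the lower end. With the correct scale, your Poincar\'e integrand is $\lambda(\lambda y)^{-2}$, whose integral from $y\approx\lambda^{-1}$ is of order one. In any case you leave that estimate unproved.
\end{itemize}

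What actually has to be negligible is the layer $[\lambda^{-\sigma},RS]$, with $S\asymp\sqrt{c\log\lambda/\lambda}$. The paper handles it without Efron--Stein. It cuts the layer into $\ell_\lambda\approx\lambda^{1-\sigma}/\log^2\lambda$ columns $\Gamma_i$. With overwhelming probability the boundary cell $\beta_{i-2}$ contains a Poisson point that is a candidate neighbour of every point of $\Gamma_i$. So these edges are at most $5/\ell_\lambda$ and depend only on nearby columns, giving $\Var\cLint=O(\lambda^{3\sigma-2}\log^7\lambda)\to0$; this is where $\sigma<2/3$ is needed.

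Your independence step also has a gap. Vanishing covariance does not give joint convergence to $s_\phi Z+Q_1$ with independent summands: $Q_1$ is non-Gaussian, so uncorrelated does not mean independent. Also, with no buffer between your bulk and boundary strip, bulk points within $O(\lambda^{-1/2})$ of the interface typically have their neighbour inside the strip. The paper inserts the intermediate buffer and uses the companion tiles of Proposition~\ref{prop:cone-companions} to make $[0,1]\times[RS,1]$ a compatible bulk. Let $F_\lambda$ be the event that every tile of side $S$ in the bulk and intermediate rows is occupied; $\Pr(F_\lambda^{\rc})\to0$ for $c$ large. On $F_\lambda$, $\tcLbulk\ind_{F_\lambda}$ is a function of $\cP_\lambda$ above height $\lambda^{-\sigma}$, and is therefore exactly independent of $\tcLbndry$.
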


Theorem~\ref{thm:generalresult} shows that
the limit law of the total edge length of the MDSF is determined by the number of cone boundary 
rays aligned with the sides of the square:
unaligned cones give a purely Gaussian limit, while each aligned boundary ray contributes one independent copy of the fixed-point distribution $\cQ$.
In particular, for almost all parameters $(\theta,\phi)$, there is a purely Gaussian limit, but for  singly-aligned or doubly-aligned cones  there are significant boundary effects that are manifest in a non-Gaussian limit. 

As stated in Theorem~\ref{thm:generalresult}, boundary effects 
are given in terms of a distribution $\cQ$ characterized by a fixed-point equation, first appearing, to our knowledge, in~\cite{pw-south-west}.
By ``$\eqd$'' we denote equality in distribution. The characterizing fixed-point equation for $\cQ$ is
 \begin{equation}
    Q \eqd U + UQ^{\prime}+(1-U)Q^{\dprime}+U \log U +(1-U) \log (1-U),  \label{eq:fixed-point} 
\end{equation}
where $Q, Q^{\prime},Q^{\dprime} \sim \cQ$, $U \sim \unif{0}{1}$ and $U, Q', Q''$ are independent. Theorem $3$ of R\"osler~\cite{rosler} implies that there is a unique mean $0$, finite-variance, distribution $\cQ$ corresponding to~\eqref{eq:fixed-point}, and this distribution has finite moments of all orders. The higher order moments of $\cQ$ may be obtained recursively from~\eqref{eq:fixed-point};
for example  $\Var Q  = 2-\frac{\pi^{2}}{6}$ and $\Exp ( Q^3 ) \approx 0.154 >0$~\cite{pw-south-west}, which shows that $\cQ$ is not Gaussian.

\subsection{Discussion and open problems}
\label{sec:discussion}

We make some comments on open problems.

\paragraph{General convex domains.}
Replace the unit square $[0,1]^2$ with a compact, convex planar domain $\cD$. The boundary $\partial \cD$ is partitioned into \emph{faces}, i.e., open line segments of the form $\{ \bu_1 + \lambda (\bu_2 - \bu_1) : \lambda \in (0,1) \}$, between \emph{vertices} $\bu_1, \bu_2$, 
and \emph{extreme points} which are either the endpoint vertices of faces, or comprise curved arcs of the boundary. 

\begin{conjecture}
For general convex domains,  an analogue of Theorem~\ref{thm:generalresult} will hold, with $0, 1$, or $2$ ``boundary terms'' $Q_i$ (scaled by the length of each contributing face) according to whether $\cD$ has exactly $0, 1$, or $2$ faces whose outwards-pointing normal vectors coincide with the inwards-pointing normal vectors to the boundary rays defining the cone $\cCtp$, $\theta \in [0,2\pi)$, $\phi \in (0,\pi)$. \end{conjecture}

\paragraph{Reflex cones.} In this paper, the angular width $\phi$ of the  cone of directions is restricted to $\phi \in (0,\pi)$ (see Definition~\ref{def:half-cone-closed}). This restriction is needed for the cone to specify a plane partial order (see Remark~\ref{rem:partial-order}), and is necessary for Theorem~\ref{thm:generalresult} to hold as stated. However, it is natural to ask about the case  $\phi \in (\pi, 2\pi)$ (cases $\phi=\pi$, if aligned, and $\phi=2\pi$ being already covered by the ``south'' MDSF~\cite{pw-south} and the ordinary NNG~\cite{avram-bertsimas}, respectively). Here we expect new phenomena, particularly if combined with the extension to general convex domains mentioned above.

\paragraph{Higher dimensions.} Consider $[0,1]^d$, $d >2$, and a direction specified by a cone with a fixed central ray and opening angle $\phi \in (0,\pi)$. A natural example is the cone corresponding to the coordinatewise  partial order. Here one expects more complex boundary effects, since, near to an aligned boundary face of the cube, the MDSF is well-approximated by an on-line version in which points arrive sequentially on $[0,1]^{d-1}$. The special case of $\phi = \pi$ was considered in~\cite{pw-south}.

\subsection{Outline of the paper}
\label{sec:outline}

The main aim of the paper is to prove Theorem~\ref{thm:generalresult} parts~\ref{thm:generalresult-unaligned} and~\ref{thm:generalresult-singly-aligned}. 
A key technical element of the proof is to quantify the local dependence of contributions to the total edge length, and the machinery required for this is presented in Section~\ref{sec:partioning-the-square}. In Section~\ref{sec:bulk} we use stabilization methodology and results of Penrose~\cite{mp-gaussian} to show that the contribution to the total edge length of the MDSF from edges in the bulk satisfies a  central limit theorem. In the case of Theorem~\ref{thm:generalresult}\ref{thm:generalresult-unaligned}, where the limit is purely Gaussian, 
this completes the proof. For Theorem~\ref{thm:generalresult}\ref{thm:generalresult-singly-aligned} we need to also control the boundary contributions, which is the subject of Section~\ref{sec:boundary-effects}, where we show that the contribution from edges close to the boundary converges in distribution to a non-normal random variable, whose distribution can be characterized by a fixed-point equation. The proof for the singly-aligned case is completed in Section~\ref{sec:asymptotic-independence}, where we show the asymptotic independence between the two contributions.

\section{Partitioning the square: companions, boundary, and bulk}
\label{sec:partioning-the-square}

\subsection{Introduction}

The goal of this section is to define sub-regions of the unit square for our later analysis: a \emph{boundary}, where there may be long edges, a \emph{bulk}, where there is local dependence and controllable interaction with the boundary, and an \emph{intermediate region} separating the two. It is convenient to build these regions on a grid with a small mesh. To quantify local dependence, we introduce the concept of \emph{companion} tiles of a grid tile, which give locations for candidate nearest neighbours. The geometry of the companions and the boundary influences the choice of the bulk. This section is arranged as follows. 
In Section~\ref{sec:tiles-and-companions}, we introduce the concepts of tiles and companions, and quantify their role in local dependence. In  Section~\ref{sec:identifying-companions}, we   identify some convenient  choices of companions for the different cone geometries.
In Section~\ref{sec:unit-square}, we describe the concept of a compatible bulk and finally, in Definition~\ref{def:3-regions}, complete the construction of the three regions described above.

\subsection{Tiles and companions}
\label{sec:tiles-and-companions}

In this section we introduce a construction for dealing with local dependence,
which involves dividing the whole of $\R^2$ into a grid of boxes, which we call `tiles'. 
We will associate to a `tile' one or more `companions', with the property that a point in a companion is necessarily a candidate nearest-neighbour of a point in the tile, hence bounding the dependency radius.
The  details will depend on the geometry of the cones and corresponding 
  choice of companion (we see this in~Section~\ref{sec:identifying-companions}). First we give the formal definition of tile.

Fix a scale parameter $S$, which will always be the reciprocal of a positive integer. 

\begin{definition}[Tiles]\label{dfin-tiling-in-R^2}
For every $p=(p_1,p_2) \in \Z^{2}$ we define the \emph{tile}
\[ T(p) := [(p_1-1)S,p_1S]\times[(p_2-1)S,p_2S ] \]
and for any $V \subseteq \Z^{2}$ define $T(V) := \cup_{q = (q_{1},q_{2}) \in V} T(q)$. 
\end{definition}

\begin{definition}\label{finiteS_{p,R}}
    Let $\cS_{p,R}$
    be the ball of radius $R$ such that $\cS_{p,R} = \{q \in \Z^{2}: d(p,q) \leq R\}$, where $d(p,q) := \max (|p_{1}-q_{1}|, |p_{2}-q_{2}|)$ denotes the $\ell^\infty$-distance on $\Z^2$.
\end{definition}

The next result says that if two points, one in each of two tiles, are close, then the tiles are close.

\begin{lemma}
\label{lem:proof1.5.1.}
Suppose $\|\bz-\bz^{\prime}\| \leq CS$ for $\bz \in T(p)$, $\bz^{\prime} \in T(q)$ and $C > 0$. Then $d(p,q) \leq \floor*{C}+1$.
\end{lemma}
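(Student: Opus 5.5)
The plan is to pass from the Euclidean distance to coordinatewise bounds and then to integer indices. Write $\bz = (z_1,z_2)$ and $\bz' = (z_1',z_2')$. For each coordinate $i \in \{1,2\}$,
\[
|z_i - z_i'| \le \|\bz - \bz'\| \le CS .
\]
Membership $\bz \in T(p)$ means $(p_i - 1)S \le z_i \le p_i S$. Likewise $\bz' \in T(q)$ gives $(q_i-1)S \le z_i' \le q_i S$.

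Fix $i$ and suppose, without loss of generality, that $p_i \ge q_i$. Chaining the inequalities gives
\[
(p_i - 1)S \le z_i \le z_i' + CS \le q_i S + CS .
\]
Dividing by $S>0$ yields $p_i - q_i \le C + 1$. The case $q_i \ge p_i$ is symmetric, so $|p_i - q_i| \le C+1$.

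The left-hand side is an integer, so it is bounded by the largest integer not exceeding $C+1$:
\[
|p_i - q_i| \le \floor*{C+1} = \floor*{C} + 1 .
\]
Taking the maximum over $i \in \{1,2\}$ gives $d(p,q) \le \floor*{C}+1$, as required.

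There is no real obstacle here. The only points needing care are the closed tile boundaries and the integer rounding. Closed tiles allow $\bz$ and $\bz'$ to lie at extreme corners, which is why the bound carries the extra $+1$ rather than being just $\floor*{C}$. The rounding step uses only the identity $\floor*{C+1} = \floor*{C}+1$.
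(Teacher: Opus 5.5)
Your proof is correct and is the same coordinatewise argument the paper uses. You write it directly, chaining $(p_i-1)S \le z_i \le z_i' + CS \le q_iS + CS$ and invoking integrality, whereas the paper argues by contradiction via a separating strip of $\lfloor C\rfloor+1$ tiles. One small quibble with your closing remark: the extra $+1$ does not come from the tiles being closed, since points in adjacent tiles can be arbitrarily close whether or not tile boundaries are included.
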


\begin{proof}
Let $x$ be the horizontal (Euclidean) distance between $\bz$ and $\bz^{\prime}$, and $y$ be the vertical (Euclidean) distance between $\bz$ and $\bz^{\prime}$. Then   $x \leq CS$ and $y \leq CS$.
 Suppose for a contradiction that $|p_1-q_1| \geq \floor*{C}+2$. This implies that there must be a vertical strip of at least $\floor*{C}+1$ squares wide separating $p$ and $q$, which means the horizontal distance $x$ is at least $(\floor*{C}+1)S$,  
 which contradicts our assumption $x\leq CS$.
Similarly, $y \leq CS$ implies $|q_2-p_2|\leq \floor*{C}+1$. Hence, $d(p,q)= \max\{|p_1-q_1|,|p_2-q_2|\} \leq \floor*{C}+1$.
\end{proof}

Now, we define the concept of companion;
recall  $\cCtp(\bx)$ from Definition~\ref{def:half-cone-closed}.

\begin{definition}[Companion]\label{def:anycone}
    For $\theta \in [0,2\pi)$ and $\phi \in (0,\pi)$, for each $p \in \Z^{2}$, call $q \in \Z^{2}$ a \emph{companion} of $p$ if for all $\bx \in T(p)$, it holds that $T(q) \subseteq \cCtp(\bx)$.  
\end{definition}
Note that, $q$ is a companion of $p$ if and only if $q-p$ is a companion of $\0 = (0,0)$.

\begin{definition}[Ball of stabilization]
\label{def:mu,qa}
For $r= (r_1,r_2)$   a companion of $\0$, define 
    $\rho(r) := |r_1|+|r_2|+2$. 
    For $p \in \Z^2$ define 
  \begin{equation}
    \cS_{p,\rho(r)} =\{ q \in \Z^2 :d(q,p) \leq |r_1|+|r_2|+2 \}, \nonumber
\end{equation}
the \emph{ball of stabilization} associated with $p$ and $r$.
Note that, $\cS_{p,\rho(r)}=p+\cS_{\0,\rho(r)}$.
\end{definition}

The next result shows how companions lead to local dependence. Recall by Definition~\ref{def:directed-nn} that $N_{\theta, \phi}(\bx, \cX) $ is a nearest neighbour of $\bx$ on the locally finite set $\cX$.

\begin{proposition}
       \label{prop:geom}
    Suppose $r \in \Z^{2}$ is a companion of $\0$. Then
    for any locally finite set $\cX \subseteq \R^2$, $p \in \Z^{2}$, and $\bx \neq \by \in \cX$ with $\bx \in T(p)$, $\by \in T(p+r)$, we have $N_{\theta,\phi}(\bx,\cX) \in T( \cS_{p,\rho(r)})$. 
\end{proposition}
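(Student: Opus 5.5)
Since $r$ is a companion of $\0$, the tile $p+r$ is a companion of $p$. By Definition~\ref{def:anycone} this gives $T(p+r) \subseteq \cCtp(\bx)$. The point $\by$ lies in $T(p+r)$ and differs from $\bx$, so $\by \in \cX \cap \cCotp(\bx)$. In particular this set is non-empty, so by Definition~\ref{def:directed-nn} the neighbour $N_{\theta,\phi}(\bx,\cX)$ is a genuine point of $\cX$. Its distance from $\bx$ is
\[
\cDtp(\bx,\cX) \le \|\bx-\by\|.
\]
The whole argument therefore reduces to a metric bound: every point within distance $\|\bx - \by\|$ of $\bx$ lies in a tile within $\ell^\infty$-distance $\rho(r)$ of $p$.

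Next I bound $\|\bx-\by\|$ in units of $S$. Both points lie in tiles of side $S$ whose indices differ by $r=(r_1,r_2)$. Hence the horizontal separation of $\bx$ and $\by$ is at most $(|r_1|+1)S$, and the vertical separation is at most $(|r_2|+1)S$. This gives
\[
\|\bx-\by\| \le \bigl(|r_1|+|r_2|+2\bigr)S = \rho(r)\,S .
\]
The Euclidean norm is at most the $\ell^1$ norm, which is what produces the sum $|r_1|+|r_2|$ in $\rho(r)$. So $\bz := N_{\theta,\phi}(\bx,\cX)$ satisfies $\|\bz - \bx\| \le \rho(r) S$.

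Finally, let $q$ be a tile index with $\bz \in T(q)$; such a $q$ exists since the tiles cover $\R^2$. I then show $d(p,q) \le \rho(r)$. Lemma~\ref{lem:proof1.5.1.} with $C=\rho(r)$, an integer, gives only $d(p,q)\le \rho(r)+1$, which is one too many. This off-by-one is the only real obstacle, and I will sharpen the estimate coordinatewise instead.

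Horizontally, $|x$-coordinate of $\bz - \bx| \le (|r_1|+1)S + (|r_2|+1)S$ is too crude. A better route is to bound each coordinate of $\bz-\bx$ by $\|\bz-\bx\| \le \|\bx-\by\|$. Then $|p_1-q_1| \le \lfloor \|\bx-\by\|/S \rfloor + 1$, by the argument in the proof of Lemma~\ref{lem:proof1.5.1.}. It remains to check that $\|\bx-\by\| < \rho(r) S$, so that the floor is at most $\rho(r)-1$.

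Strict inequality holds because the Euclidean bound $\sqrt{a^2+b^2} \le a+b$ is strict unless one of $a,b$ vanishes. The degenerate cases have one coordinate gap zero, for example $\bx,\by$ vertically aligned. In those cases I use $\|\bx-\by\| \le \max(|r_1|+1,|r_2|+1)S + 0 \le (\rho(r)-1)S$ directly. In all cases $\|\bx-\by\|\le(\rho(r)-1)S$ or $<\rho(r)S$, so $|p_i-q_i|\le \rho(r)$ for $i=1,2$. Hence $q\in\cS_{p,\rho(r)}$ and $N_{\theta,\phi}(\bx,\cX)\in T(\cS_{p,\rho(r)})$.
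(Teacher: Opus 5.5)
Your proof is correct and follows the paper's route. Both arguments use $\by$ as a candidate neighbour to get $\cDtp(\bx,\cX)\le\|\bx-\by\|$, then show this is strictly less than $\rho(r)S$, and then apply Lemma~\ref{lem:proof1.5.1.} with $C<\rho(r)$ to get $d(p,q)\le\rho(r)$. The only difference is that the paper gets the margin from the explicit estimate $\sqrt{(|r_1|+1)^2+(|r_2|+1)^2}\le |r_1|+|r_2|+\tfrac32$, which makes your degenerate-case split unnecessary; strictness also follows directly from $(|r_1|+1)(|r_2|+1)\ge 1>0$.
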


\begin{proof}
    Take $\0$ with companion $r$. Suppose $\bx,\by \in \cX$ with $\bx \in T(p)$, $\by \in T(p+r)$. By Definition~\ref{def:anycone}, it holds that $T(p+r) \subseteq \cCtp(\bx)$ and hence $\by \in \cCtp(\bx)$, so $\by$ is a candidate nearest-neighbour of $\bx$. Moreover, $\|\by-\bx\| \leq S\sqrt{(|r_1|+1)^{2}+(|r_2|+1)^{2}}$. Also
    \begin{align}
        (|r_1|+|r_2|+3/2)^{2} &= r_1^{2}+3|r_1|+9/8 +r_2^{2}+3|r_2|+9/8+2|r_1||r_2| \nonumber\\ 
        & > (|r_1|+1)^{2}+(|r_2|+1)^{2}, \nonumber
    \end{align}
    implying $\|\by-\bx\| \leq (|r_1|+|r_2|+3/2)S$. %, see Figure~\ref{fig:my-geom12}. 
    Since $\bx$ has at least one candidate nearest-neighbour (namely $\by$), by Definition~\ref{def:directed-nn}, we have 
    \begin{align}
        \cDtp(\bx, \cX)= \| \bx-N_{\theta, \phi}(\bx, \cX)\|  \leq \|\bx - \by\| \leq (|r_1|+|r_2|+3/2)S \nonumber.
    \end{align}
    Finally, let $q \in \Z^{2}$ be such that $N_{\theta, \phi}(\bx, \cX) \in T(q)$. Then Lemma~\ref{lem:proof1.5.1.} implies $d(p,q) \leq |r_1|+|r_2|+2$, 
hence we verify that $q \in  \cS_{p,\rho(r)}$, and so $N_{\theta,\phi}(\bx,\cX) \in T(q) \subseteq T( \cS_{p,\rho(r)})$.
    \end{proof}

The next result, 
which follows from Proposition~\ref{prop:geom}, 
shows that if a tile and its companion each contain a point,
the distance from the point in the tile to its nearest neighbour is not affected by any changes outside its ball of stabilization.

\begin{corollary}
    \label{def:locall-finite-set}
  Let $r \in \Z^{2}$ be a companion of $\0$. Let $\cX \subseteq \R^2$ be  locally finite, and $\bx \neq \by \in \cX$ satisfy $\bx \in T(p)$ and $\by \in T(p+r)$ for some $p \in \Z^{2}$. Then,
    for any locally finite $\cY \subseteq \R^2 \setminus  T(\cS_{p,\rho(r)})$ 
    \begin{equation}
        \cDtp(\bx, \cX) = \cDtp(\bx, \cX \cap T( \cS_{p,\rho(r)}) ) = \cDtp(\bx, ( \cX \cap T( \cS_{p,\rho(r)}) ) \cup \cY ). \nonumber
    \end{equation}
\end{corollary}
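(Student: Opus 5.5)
The plan is to derive both equalities from Proposition~\ref{prop:geom}, applied separately to the three point sets appearing in the statement. Write $B := T(\cS_{p,\rho(r)})$, and let $\cX$ stand for any one of $\cX$, $\cX \cap B$, or $(\cX \cap B)\cup \cY$. First I will check that in each case $\bx$ and $\by$ both lie in the set. This holds because $\bx \in T(p) \subseteq B$ (since $d(p,p)=0$), and $\by \in T(p+r) \subseteq B$ (since $d(p,p+r) = \max(|r_1|,|r_2|) \le \rho(r)$). Each of the three sets is locally finite, since a subset of a locally finite set, or a union of two locally finite sets, is locally finite. So Proposition~\ref{prop:geom} applies to each set. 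For each set $\cW$ it gives that $\bx$ has a candidate neighbour in $\cCotp(\bx)$, and that every directed nearest neighbour $N_{\theta,\phi}(\bx,\cW)$ lies in $B$. In fact the proof of the proposition shows more: any point of $\cW \cap \cCotp(\bx)$ at distance at most $\|\bx-\by\|$ from $\bx$ lies in $B$, by Lemma~\ref{lem:proof1.5.1.}.

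The key step is then a general observation. If $\cW$ is locally finite, contains $\bx$ and $\by$, and $\cW \cap B = \cX \cap B$, then
\[
\cDtp(\bx,\cW) = \inf\{\|\bz-\bx\| : \bz \in \cW \cap \cCotp(\bx)\}.
\]
This infimum may be restricted to those $\bz$ with $\|\bz-\bx\| \le \|\by-\bx\|$, because $\by$ itself is admissible. All such $\bz$ lie in $B$, by the remark at the end of the previous paragraph. Hence the infimum equals $\inf\{\|\bz-\bx\| : \bz \in \cX\cap B\cap\cCotp(\bx),\ \|\bz-\bx\|\le\|\by-\bx\|\}$, and this expression depends only on $\cX \cap B$.

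It remains to check the hypothesis $\cW \cap B = \cX \cap B$ for the three sets. For $\cW=\cX$ and $\cW = \cX\cap B$ this is immediate. For $\cW = (\cX\cap B)\cup\cY$ it uses $\cY \cap B = \emptyset$, which is exactly the hypothesis $\cY \subseteq \R^2\setminus B$. So all three directed nearest-neighbour distances coincide.

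The only delicate point is the strengthened form of Proposition~\ref{prop:geom}, namely that every point within distance $\|\by-\bx\|$ of $\bx$ lies in $B$, and not merely the chosen nearest neighbour. If I do not want to re-derive this, an alternative is to use the proposition as stated and argue by contradiction. Suppose the distances differ. Then in one of the sets there is a point $\bz$ in the cone that is strictly closer to $\bx$ than anything in the other set. Choosing the tie-break so that $N_{\theta,\phi}(\bx,\cdot)$ equals this point, the proposition forces $\bz \in B$. But then $\bz$ belongs to both sets, which is a contradiction.
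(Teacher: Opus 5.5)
Your proposal is correct and is essentially the paper's argument: use Proposition~\ref{prop:geom} (or its proof together with Lemma~\ref{lem:proof1.5.1.}) to confine the nearest neighbour of $\bx$ to $T(\cS_{p,\rho(r)})$, then compare point sets that agree on that region. The paper is slightly more economical, proving the second equality directly for $(\cX\cap T(\cS_{p,\rho(r)}))\cup\cY$ and obtaining the first as the special case $\cY=\cX\setminus T(\cS_{p,\rho(r)})$; also, no tie-breaking is needed in your fallback argument, since whichever nearest neighbour is chosen in the set with the smaller distance already gives the contradiction.
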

\begin{proof}
   Proposition~\ref{prop:geom} implies that $N_{\theta, \phi}(\bx,( \cX \cap T(\cS_{p,\rho(r)})) \cup \cY)$ can not be in $\cY$ since $T(\cS_{p,\rho(r)}) \cap \cY = \emptyset$, so $\cD_{\theta, \phi}(\bx, (\cX \cap T(\cS_{p,\rho(r)}) )\cup \cY) = \cD_{\theta, \phi}(\bx, \cX \cap T(\cS_{p,\rho(r)}))$ for any $\cY \subseteq \R^2 \setminus T(\cS_{p,\rho(r)})$. Moreover, the identity $\cD_{\theta, \phi}(\bx, \cX) = \cD_{\theta, \phi}(\bx, \cX \cap T(\cS_{p,\rho(r)}))$ is just the special case of $\cY= \cX \setminus  T( \cS_{p,\rho(r)})$. The statement follows from the definition of $\cDtp(\bx, \cX)$.
   \end{proof}

\subsection{Identifying companions for different cone geometries}
\label{sec:identifying-companions}

The previous section showed how companions can be used to quantify local dependence. In this section, we identify explicit convenient choices of companions,
depending on the different cone geometries.
There are five cases: 
\begin{enumerate}[label=(\alph*)]
    \item  singly-aligned, obtuse case,
    \item  singly-aligned, acute case,
    \item  unaligned, contains no axes,
    \item unaligned, contains one axis,
    \item  unaligned, contains two axes.
\end{enumerate}
Proposition~\ref{prop:cone-companions}   identifies companions in each case,
which by symmetries of the square reduce to the following. We will from this point onwards, without loss of generality, assume that for singly-aligned cases we have $\theta = \frac{\pi}{2}$ and for unaligned cases we have $0 < \theta < \frac{\pi}{2}$.    

\begin{proposition}\label{prop:cone-companions}
    \begin{thmenumi}[label=(\alph*)]
        \item\label{prop:cone-companions-a} \emph{(Obtuse):} Let $\theta = \pi/2$, $\phi >\pi/2$. Set $h = \cot(\phi- \frac{\pi}{2})$. Then $(0, -(\floor*{h}+2))$ is a companion of $\0$.
        \item\label{prop:cone-companions-b}  \emph{(Acute):} Let  $\theta = \pi/2$, $\phi < \pi/2$. Set $h = 2\cot \phi$. Then $(-(\floor*{h}+2),-1)$ is a companion of~$\0$.
        \item \label{prop:cone-companions-c} \emph{(Unaligned case 0):} Let $0< \theta < \frac{\pi}{2}$ and $\theta+\phi < \frac{\pi}{2}$. Then there exists a companion $r$ of $\0$ with $d(\0,r) \leq \big\lfloor {\frac{3}{\sqrt{2}}(1+\csc(\phi/2))} \big\rfloor +1$.
        \item \label{prop:cone-companions-d} \emph{(Unaligned case 1):} Let $0< \theta < \frac{\pi}{2}$ and $\frac{\pi}{2}< \theta+\phi < \pi$. Set $h = \max(\cot(\frac{\pi}{2}-\theta),\cot(\theta + \phi - \frac{\pi}{2}))$. Then,  $(-(\floor*{h}+2),0)$ is a companion of $\0$.
 \item \label{prop:cone-companions-e} \emph{(Unaligned case 2):} Let $0< \theta < \frac{\pi}{2}$, $\theta+\phi > \pi$, where $\phi < \pi$. Set
  $h_1 = \cot(\frac{\pi}{2} - \theta)$ and $h_2 = \cot(\theta + \phi -\pi)$.
 Then $(-(\floor*{h_{1}}+2),0)$ and $(0,-(\floor*{h_{2}}+2))$ are both companions of $\0$.
    \end{thmenumi}
\end{proposition}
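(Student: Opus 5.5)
The plan is to reduce each part to finitely many elementary inequalities by using convexity. Since $\phi<\pi$, the cone $\cCtp(\0)$ is convex: it is the intersection of two closed half-planes whose boundary lines are spanned by the rays at angles $\theta$ and $\theta+\phi$. Also $\cCtp(\bx)=\bx+\cCtp(\0)$. Hence $q$ is a companion of $\0$ if and only if $\by-\bx\in\cCtp(\0)$ for all $\bx\in T(\0)$ and all $\by\in T(q)$.

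For fixed $\by$, the set $\{\bx:\by-\bx\in\cCtp(\0)\}=\by-\cCtp(\0)$ is convex. So it contains the square $T(\0)$ as soon as it contains the four corners of $T(\0)$. By the same argument in $\by$, it suffices to check the differences between corners. The set of these differences is $\{(v_1,v_2): v_1\in\{a-1,a,a+1\}S,\ v_2\in\{b-1,b,b+1\}S\}$ for $q=(a,b)$. Each membership test is two linear inequalities, one for each bounding half-plane, and the binding ones are the extreme corners.

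With the convention that the ray at angle $\alpha$ has direction $(-\sin\alpha,\cos\alpha)$, I would handle the cases as follows.

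\emph{Case (a).} The ray at $\theta=\pi/2$ points west, so the first half-plane is $\{v_2\le 0\}$. The ray at $\theta+\phi\in(\pi,3\pi/2)$ points down-right, making angle $\phi-\pi/2$ with the downward vertical, so the second half-plane is $\{v_1\le |v_2|\tan(\phi-\pi/2)\}=\{v_1\le -v_2/h\}$ for $v_2\le 0$. For $q=(0,-k)$ with $k=\floor*{h}+2$, every difference has $v_2\le -(k-1)S<0$ and $v_1\le S$. It then suffices that $1\le (k-1)/h$, which holds since $k-1>h$.

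\emph{Case (b).} The half-planes are $v_2\le0$ and the one bounded by the ray at angle $\pi/2+\phi$, pointing down-left at angle $\pi/2-\phi$ below the horizontal. For $q=(-k,-1)$, the worst corner difference is $(-(k-1)S,0)$ for the west constraint and $(-(k-1)S,-2S)$ for the slanted one. The inequality needed is $2\ge$ ... which amounts to $(k-1)\tan(\pi/2-\phi)\cdot$ ... i.e.\ $k-1\ge 2\cot\phi=h$ up to the direction of inequality, and this holds for $k=\floor*{h}+2$.

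\emph{Cases (d) and (e).} These are identical in spirit, with the companion along a coordinate axis that lies strictly inside the cone. Here $h$ (respectively $h_1,h_2$) is the cotangent of the angle between that axis and the nearer bounding ray. The same computation with $|v_\perp|\le S$ and parallel displacement at least $(k-1)S$ gives the result, and in (d) taking the maximum handles both rays at once.

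\emph{Case (c)} is the main obstacle, because no axis lies in the cone and an explicit lattice point has to be chosen. I would take the bisecting ray $\bu=(-\sin(\theta+\phi/2),\cos(\theta+\phi/2))$. The point $D\bu$ lies at distance $D\sin(\phi/2)$ from the boundary of $\cCtp(\0)$. Therefore the closed ball of radius $D\sin(\phi/2)$ about $D\bu$ lies in the cone, and also in $\cCtp(\bx)$ for every $\bx$ within distance $\sqrt2 S/2$ of the origin, after shrinking the radius by that amount. I would work relative to the centre of $T(\0)$. Then I choose $q$ to be the lattice index of a tile containing $D\bu$, so that $T(q)$ lies within distance $\sqrt2 S$ of $D\bu$. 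The requirement is $D\sin(\phi/2)\ge \frac{3}{\sqrt2}S$, which accounts for the half-diagonal of $T(\0)$ plus the full diagonal of $T(q)$. I would take $D=\frac{3}{\sqrt2}S\,(1+\csc(\phi/2))$, which meets this requirement with a safety margin, and the margin can absorb the offset between tile centres and corners. Finally, the bound $\|D\bu\|\le D$ together with Lemma~\ref{lem:proof1.5.1.}, applied with $C=D/S$, gives $d(\0,r)\le\floor*{\frac{3}{\sqrt2}(1+\csc(\phi/2))}+1$. The delicate points are bookkeeping the tile-centre versus tile-corner offsets consistently and checking the sign conventions of the bounding half-planes, and I would verify these carefully in the order above.
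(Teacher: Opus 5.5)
Your convexity reduction (checking the two half-plane inequalities at the four corners of $T(q)-T(\0)$) is sound, and cases (a), (c), (d), (e) go through as you describe. Case (b), however, is wrong as written. The ray at angle $\frac{\pi}{2}+\phi$ has direction $(-\cos\phi,-\sin\phi)$. So it lies at angle $\phi$ below the westward horizontal; it is $\frac{\pi}{2}-\phi$ from the \emph{downward vertical}. The slanted half-plane is therefore $\{v_2\ge v_1\tan\phi\}$. At your (correctly identified) worst corner $(-(k-1)S,-2S)$ this reads $2\le (k-1)\tan\phi$, i.e.\ $k-1\ge 2\cot\phi=h$, which holds because $k-1=\floor*{h}+1>h$. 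With the angle you stated, the same computation gives $2\le (k-1)\cot\phi$, i.e.\ $k-1\ge 2\tan\phi$, which is not what is claimed. The phrase ``up to the direction of inequality'' is hiding a real error, so replace that sentence with the one-line computation above.

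Apart from this, your route is the paper's geometry in different packaging. In (a) and (b) the paper bounds the tangent of the angle that $\bz-\bx$ makes with a coordinate axis (downward in (a), leftward in (b)). This is equivalent to your half-plane test, and your separate $v_2\le 0$ check makes explicit a sign condition the paper leaves implicit in (b).

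For (d) and (e) the paper does no new computation. It writes $\cCtp(\bx)$ as an intersection (d) or union (e) of two singly-aligned obtuse cones and applies (a) to each. Your direct check needs one extra fact, which is true: in (e) the far half-plane is non-binding. For the west companion, the line through the ray at angle $\theta+\phi$ makes angle $\frac{\pi}{2}-(\theta+\phi-\pi)>\frac{\pi}{2}-\theta$ with the west axis, since $\phi<\pi$; the south companion is symmetric.

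In (c) the paper uses an intermediate apex $\by$ with $\by\in\cCtp(\bx)$ for all $\bx\in T(\0)$, and then a disc of radius $\sqrt2 S$ inside $\cCtp(\by)$. That disc is centred on the bisector at distance $\frac{3}{\sqrt2}S\csc(\phi/2)$ from the centre of $T(\0)$, which is exactly your minimal $D$, so the constructions are the same. The paper then bounds $\|\bw-\bx\|$ over all $\bw\in T(r)$ and $\bx\in T(\0)$, and that is where its extra ``$1+$'' comes from. Lemma~\ref{lem:proof1.5.1.} needs only one pair of points, so your version shows that $D=\frac{3}{\sqrt2}S\csc(\phi/2)$ already gives $d(\0,r)\le\floor*{\frac{3}{\sqrt2}\csc(\phi/2)}+1$. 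In particular there is no centre/corner offset left to absorb.
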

\begin{proof}
For part~\ref{prop:cone-companions-a}, we need to show that $r = (0, -(\floor*{h}+2))$ is a companion of $\0$ for $h = \cot (\phi-\frac{\pi}{2})$. It is enough to show that for every $\bx = (x_1, x_2) \in T(\0)$ and every $\bz = (z_1, z_2) \in T(r)$, $\bz \in  \cCtp(\bx)$. Observe $|x_{1} - z_{1}|  \leq S$ and $|x_{2}- z_{2}| \geq (\floor*{h}+1)S$. Let $\alpha$ be the angle of ray from $\bx$ to $\bz$ measured anticlockwise relative to the downward vertical. Then,
\begin{align}
    \left|\tan \alpha \right| = \frac{|x_{1}-z_{1}|}{|x_{2}-z_{2}|} \leq \frac{1}{\floor*{h}+1} < \frac{1}{h} = \tan\left(\phi - \frac{\pi}{2}\right). \nonumber
\end{align}
%since $\frac{1}{\floor*{h}+1} < \frac{1}{h} = \tan(\phi - \frac{\pi}{2})$. 
In other words, angle $\alpha$ is contained in $(-(\phi - \frac{\pi}{2}),\phi - \frac{\pi}{2})$, which implies that $\bz \in  \cCtp(\bx)$.
 
For part~\ref{prop:cone-companions-b}, we need to show that $r = ( -(\floor*{h}+2),-1)$ is a companion of $\0$. It is enough to show that
for every $\bx = (x_1, x_2) \in T(\0)$ and every $\bz =(z_1,z_2) \in T(r)$, $\bz \in  \cCtp(\bx)$. Observe now $|x_{1} - z_{1}|  \geq (\floor*{h}+1)S$ and $|x_{2}- z_{2}| \leq 2S$. Let $\alpha$ be the angle of ray from $\bx$ to $\bz$ measured anticlockwise relative to the leftward horizontal. Then,
\begin{align}
    \left|\tan \alpha \right| \leq \frac{|x_{2}-z_{2}|}{|x_{1}-z_{1}|} \leq \frac{2}{\floor*{h}+1} < \frac{2}{h} = \tan \phi, \nonumber
\end{align}
by the definition of $h = 2 \cot \phi$. Then $|\alpha| < \phi$, which implies $\bz \in  \cCtp(\bx)$.

For part~\ref{prop:cone-companions-c}, we make use of the fact that if $\by \in \cCtp(\bx)$, then $\bx \in \cC_{\pi +\theta, \phi}(\by)$, since $\cC_{\pi +\theta, \phi}(\by)$ is a rotation by $\pi$ of the cone $\cCtp(\by)$.%, see Figure~\ref{fig:unaligned-case2}.

Consider tile $T(\0)$ and the circumcircle $C_{1}$ of radius $\frac{S}{\sqrt{2}}$ that contains $T(\0)$. Let $\by \in \R^2$ be a point such that $\cC_{\pi +\theta, \phi}(\by)$  contains $C_{1}$, and hence all points in $T(\0)$. We can choose $\by$ so that $\sup_{\bx \in T(\0)} \|\bx-\by\| \leq \ell = \frac{S}{\sqrt{2}}(1+\csc
(\phi/2))$
since $\|\bx- \by\| \leq \|\bx- \bz_{1}\| + \|\bz_{1}- \by\|$ where $\bz_{1}$ is the centre of the $C_{1}$, and $\by$ can be chosen so that $\|\bz_{1}- \by\| = \frac{S}{\sqrt{2}}\csc
(\phi/2)$. By the first statement, this means that $\by \in \cCtp(\bx)$ for all $\bx \in T(\0)$, and hence $\cCtp(\by) \subseteq \cap_{\bx \in T(\0)} \cCtp(\bx)$. Hence ${r}$ is a companion of $\0$ if $T(r) \subseteq \cCtp(\by)$. Let $C_{2}$ be a circle inside $\cCtp(\by)$ large enough to guarantee that it contains some tile $T(r)$. The circle of radius $\sqrt{2}S$ centred at $\bz_{2}$ with distance $\|\bz_{2}- \by\| = \sqrt{2}S\csc(\phi/2)$ will achieve this.
%, see Figure~\ref{fig:unaligned-case2}.
Then for any $\bw \in T(r)$ and $\bx \in T(\0)$, we have 
\begin{align}
    \|\bw-\bx\| & \leq  \|\bw-\bz_{2}\| + \|\bz_{2} - \by\| + \|\by - \bx\|
    \leq \frac{3}{\sqrt{2}}S(1+\csc(\phi/2)).\nonumber
\end{align}
Hence by Lemma~\ref{lem:proof1.5.1.}, we obtain the claimed bound
\begin{equation}
    d(\0,r) \leq \floor*{\frac{3}{\sqrt{2}}(1+\csc(\phi/2))}+1. \nonumber
\end{equation}
  
For part~\ref{prop:cone-companions-d}, we use the fact that $ \cCtp(\bx) = \cC_{0,\theta+ \phi}(\bx) \cap \cC_{\theta, \pi-\theta}(\bx)$ where both $\cC_{0,\theta+ \phi}(\bx)$ and $\cC_{\theta, \pi-\theta}(\bx)$ are singly-aligned cones with obtuse angles at $\bx$. Hence we can deduce using part~\ref{prop:cone-companions-a} that for $\cC_{0,\theta+ \phi}(\bx)$, any square $(-r_{1},0)$ is a companion of $\0$ when $r_{1} \geq \floor*{\cot(\theta+ \phi-\frac{\pi}{2})}+2$, and for $\cC_{\theta, \pi-\theta}(\bx)$, any square $(-r_{1},0)$ is a companion of $\0$ when $r_{1} \geq \floor*{\cot(\frac{\pi}{2}-\theta)}+2$. Therefore $r =(-(\floor*{h} +2),0)$ is a companion of $\0$ for both cones, for $h$ claimed in the proposition, and hence is a companion of $\0$ for their intersection.
 
  For part~\ref{prop:cone-companions-e}, we use the fact that  $ \cCtp(\bx) = \cC_{\frac{\pi}{2},\theta+ \phi-\frac{\pi}{2}}(\bx) \cup \cC_{\theta, \pi-\theta}(\bx)$
   where both $\cC_{\frac{\pi}{2},\theta+ \phi-\frac{\pi}{2}}(\bx)$ and $\cC_{\theta, \pi-\theta}(\bx)$ are singly-aligned cones with obtuse angle at $\bx$. 
Then from part~\ref{prop:cone-companions-a} we know  
    \begin{equation}
        r_{2}=\left(0,-\left(\floor*{\cot\left(\theta+ \phi-\tfrac{\pi}{2} - \tfrac{\pi}{2}\right)}+2\right)\right) = (0,-(\floor*{\cot(\theta+ \phi-\pi)}+2)), \nonumber
    \end{equation}
    is a companion of $\0$ for $\cC_{\frac{\pi}{2},\theta+ \phi-\frac{\pi}{2}}(\bx)$, and therefore a companion of $\0$ for $\cCtp(\bx)$. Similarly, we know $r_{1} = (-(\floor*{\cot(\frac{\pi}{2}-\theta)}+2), 0)$ is a companion of $\0$ for $\cC_{\theta, \pi-\theta}(\bx)$, and therefore a companion of $\0$ for $\cCtp(\bx)$. 
\end{proof}

\subsection{Bulks and boundaries}
\label{sec:unit-square}

In this section we define the notion of a compatible bulk and then finalize our definitions of the three regions of the square. Recall by Definition~\ref{def:mdsf} that $\cL(\cX)$ is the total length of MDSF on $\cX \subseteq [0,1]^2$.
We will define two regions of $[0,1]^2$, $\cRbndry$ (where long edges may appear) and $\cRbulk$ (where a local dependence property holds, as we explain more in Section~\ref{sec:bulk}).
In order to discuss the contribution to $\cL(\cX)$ coming from different regions of $[0,1]^2$, we define the following notation.
\begin{definition}\label{regions}
For $\cR \subseteq \R^2$, define 
\begin{equation}
    \cL(\cX,\cR) := \sum_{\bx \in \cX \cap \cR} \cDtp(\bx,\cX). \nonumber
\end{equation}
\end{definition}
For the unaligned cone, we may take $\cRbndry$ to be empty; 
for the singly-aligned cones $\cRbndry$ will be a narrow strip $[0,1]\times[0,\lambda^{-\sigma}]$ along the bottom boundary of $[0,1]^2$. 
Once $\cRbndry$ is chosen, the bulk $\cRbulk$ is chosen to satisfy two conditions:
\begin{enumerate}
    \item \label{cond1} that the terms  $\cDtp(\bx, \cX)$ contributing to $\cL(\cX,\cRbulk)$ exhibit local  dependence, and
    \item \label{cond2} that any nearest-neighbour $N_{\theta,\phi}(\bx, \cX)$ of a  point $\bx \in \cX \cap \cRbulk$ is not in $\cRbndry$.
\end{enumerate}
For the random sets $\cP_{\lambda}$ that we will consider, condition~\ref{cond1}) will allow us to prove a CLT for  $\cL(\cP_{\lambda},\cRbulk)$ and condition~\ref{cond2}) will ensure an asymptotic independence of $\cL(\cP_{\lambda},\cRbulk)$ and $\cL(\cP_{\lambda},\cRbndry)$  (see Section~\ref{sec:asymptotic-independence-proof} for more details). This motivates the definition of compatible bulk given below in Definition~\ref{def:properties}.
Concerning condition~\ref{cond1}),
for $\bx \in \cX \subseteq [0,1]^2$, local dependence of $\cDtp(\bx, \cX)$ can be achieved in two ways. Either, 
we find a companion $p+r$ of $p$, where $p$ satisfies $\bx \in T(p)$, to apply Proposition~\ref{prop:geom} which requires that $T(p+r) \cap [0,1]^2 \neq \emptyset$; alternatively, $N_{\theta,\phi}(\bx, \cX) \in T(\cS_{p(\bx),R})$ may hold simply because all candidate nearest-neighbours are within a bounded distance of $\bx$, for example because $\bx$ is close to the boundary of $[0,1]^2$, and $\cCtp(\bx) \cap [0,1]^2$ has small diameter. We quantify this second case as follows. 

\begin{definition}\label{def:longestEuclidean}
For $S> 0$ and $p \in \Z^2$, define
    \begin{align}
        \nu(p) := \sup\{
        \|\bx-\by\| : \bx \in T(p), \by \in \cCtp(\bx) \cap [0,1]^2
        \}. \nonumber
    \end{align}
\end{definition}

 \begin{lemma}\label{Lemma443}
     Fix $\theta$, $\phi$. Let $\cX \subseteq [0,1]^2$ be a locally finite set. Suppose $\nu(p) \leq \delta S$ for some $\delta > 0$. Then for all $\bx \in \cX \cap T(p)$, we have $N_{\theta, \phi}(\bx, \cX) \in T( \cS_{p,\floor*{\delta}+1}) \cap [0,1]^2$.
 \end{lemma}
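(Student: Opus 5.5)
The argument splits according to whether or not $\bx$ has a candidate nearest neighbour in $\cX$; in both cases the goal is to show $\|\bx - N_{\theta,\phi}(\bx,\cX)\| \leq \delta S$ and then convert this into a statement about tiles using Lemma~\ref{lem:proof1.5.1.}.

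Fix $\bx \in \cX \cap T(p)$.

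\emph{Case $\cX \cap \cCotp(\bx) = \emptyset$.} By Definition~\ref{def:directed-nn} we have $N_{\theta,\phi}(\bx,\cX) = \bx$. This lies in $T(p) \subseteq T(\cS_{p,\floor*{\delta}+1})$, since $d(p,p)=0$, and it lies in $[0,1]^2$ because $\cX \subseteq [0,1]^2$.

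\emph{Case $\cX \cap \cCotp(\bx) \neq \emptyset$.} Here $\by := N_{\theta,\phi}(\bx,\cX)$ belongs to $\cX \cap \cCotp(\bx) \subseteq \cCtp(\bx) \cap [0,1]^2$. Definition~\ref{def:longestEuclidean} then gives $\|\bx - \by\| \leq \nu(p) \leq \delta S$. Let $q \in \Z^2$ be any index with $\by \in T(q)$; such a $q$ exists because the tiles cover $\R^2$. Applying Lemma~\ref{lem:proof1.5.1.} with $C = \delta$ yields $d(p,q) \leq \floor*{\delta}+1$, so $q \in \cS_{p,\floor*{\delta}+1}$. Hence $\by \in T(q) \subseteq T(\cS_{p,\floor*{\delta}+1})$, and also $\by \in \cX \subseteq [0,1]^2$.

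There is no real obstacle. The only points needing care are the degenerate case, where the nearest neighbour is $\bx$ itself, and the observation that membership in $[0,1]^2$ comes for free from $\cX \subseteq [0,1]^2$.
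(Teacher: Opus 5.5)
Your proof is correct and follows the paper's argument: bound $\|\bx - N_{\theta,\phi}(\bx,\cX)\|$ by $\nu(p) \leq \delta S$, then apply Lemma~\ref{lem:proof1.5.1.} with $C=\delta$. The paper avoids your case split by noting that $N_{\theta,\phi}(\bx,\cX) \in \cCtp(\bx) \cap \cX$ holds in both cases, since the closed cone contains its apex, but this difference is only cosmetic.
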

 \begin{proof}
     Suppose $\bx \in \cX \cap T(p)$ and consider $N_{\theta, \phi}(\bx, \cX)$ which by definition is a point in $\cCtp(\bx) \cap \cX \subseteq \cCtp(\bx) \cap [0,1]^2$. Therefore, 
  $\| \bx-N_{\theta, \phi}(\bx, \cX)\| \leq \nu(p) \leq \delta S$.  Let $q$ be such that $N_{\theta, \phi}(\bx, \cX) \in T(q)$. By Lemma~\ref{lem:proof1.5.1.}, $d(p,q) \leq \floor*{\delta}+1$, in other words, $q \in \cS_{p,\floor*{\delta}+1}$.
 \end{proof}

\begin{definition}\label{tile-notation}
    Let $\TT_{S}$ be the set of all tiles in $[0,1]^2$, given by
$\TT_{S} := \{ p \in \Z^2: T(p) \subseteq [0,1]^2
        \}$.
        \end{definition}
Note that, for a particular scale $S$ being the reciprocal of a positive integer, $\TT_{S}$ consists  of all $p =(p_{1}, p_{2}) \in \Z^{2}$ satisfying $1 \leq p_{1}, p_{2} \leq 1/S$. 

We now give the definition of compatible bulk. In Definition~\ref{def:properties} below, $\cRbndry$ is an arbitrary region of $[0,1]^2$ that we will later set to be a boundary region of $[0,1]^2$
where we expect to see long edges.
 The conditions~\ref{stat.1} \&~\ref{stat.2}/\ref{stat.3} ensure the conditions~\ref{cond1}) \&~\ref{cond2}) described above are satisfied for $\cRbulk$ and $\cRbndry$. For now, think of $\lambda$ as an arbitrary parameter that determines the region $\cRbndry$ and $S$ as an arbitrary scale. In fact, we will see in Section~\ref{sec:bulk} that $\lambda$ will be the intensity of our Poisson process and $S$ will be chosen as a function of $\lambda$, with $ S \sim \sqrt{\frac{c \log \lambda}{\lambda}}$.

\begin{definition}[Compatible bulk] \label{def:properties}
     Fix $\theta, \phi, S$. Let $C_{0} \subseteq \Z^{2}$ be a finite (non-empty) set of companions of $\0$. Let $\cRbndry \subseteq [0,1]^2$ and let $\delta \geq 0$. 
     Define the following properties of $p \in \TT_{S}$: 
     \begin{enumerate} [label=(\Alph*)]
         \item  \label{stat.1} $p+r \in \TT_{S} $  and  $T( \cS_{p,\rho(r)}) \cap \cRbndry=\emptyset$ for some $r \in C_{0}$.
     \end{enumerate}
     \begin{enumerate}[label=(${B}_{{\arabic*}}$)]
         \item \label{stat.2} $\nu(p) \leq \delta S$.
         \item \label{stat.3} $( \cup_{\bx \in T(p)} \cCtp(\bx)) \cap \cRbndry = \emptyset
         $.
     \end{enumerate}
     Let $\cB \subseteq \TT_{S}$ be any set of squares each satisfying either~\ref{stat.1} or both~\ref{stat.2} and~\ref{stat.3}. Let 
     $\cRbulk = T(\cB) \subseteq [0,1]^2$. We call $\cRbulk$ a \emph{compatible bulk} (for $\theta$, $\phi$, $C_{0}$, $\cRbndry$, $\delta$ and $S$).
\end{definition}

Now in each of the five cone geometries described in Section~\ref{sec:identifying-companions} we specify $\cRbndry$ and a  compatible bulk,
using the companions described in Proposition~\ref{prop:cone-companions}.
 
\begin{lemma}[{Obtuse}]\label{obtuse'lemma} 
    Set $\theta = \frac{\pi}{2}$ and $\phi \in (\frac{\pi}{2}, \pi)$. 
    Define $h$ as in Proposition~\ref{prop:cone-companions}\ref{prop:cone-companions-a}. 
    Let $C_{0} = \{r\} =\{(0,-(\floor*{h}+2))\}$, $\cRbndry=[0,1]\times[0,\lambda^{-\sigma}]$ for $\sigma \in (\frac{1}{2}, \frac{2}{3})$, $\delta = 0$, $R =\floor*{h}+5 $, and $R+1 \leq \frac{1}{S} < \lambda^{\sigma}$. Then, $\cRbulk=[0,1]\times[RS,1]$ is a compatible bulk.
\end{lemma}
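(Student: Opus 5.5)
We verify Definition~\ref{def:properties} by showing that every tile $p \in \TT_S$ with $T(p) \subseteq [0,1]\times[RS,1]$ satisfies condition~\ref{stat.1}. The $B$-conditions are not needed here, which is consistent with the choice $\delta=0$. We then set $\cB$ to be exactly this set of tiles, so that $T(\cB)=[0,1]\times[RS,1]$. Since $1/S$ is an integer with $1/S \geq R+1$, the strip $[RS,1]$ is a non-empty union of tile rows. The relevant tiles are the $p=(p_1,p_2)$ with $1\le p_1\le 1/S$ and $R+1 \le p_2 \le 1/S$, because $T(p)$ has bottom edge at height $(p_2-1)S \ge RS$.

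The first step is to check that $p+r \in \TT_S$. Here $r=(0,-(\floor*{h}+2))$, which by Proposition~\ref{prop:cone-companions}\ref{prop:cone-companions-a} is a companion of $\0$. We have $p+r = (p_1, p_2-\floor*{h}-2)$, with first coordinate in $[1,1/S]$. For the second coordinate, $p_2 - \floor*{h}-2 \ge R+1-\floor*{h}-2 = 4 \ge 1$ and $p_2-\floor*{h}-2 \le 1/S$. Hence $p+r\in\TT_S$.

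The second step, which is the only one needing care, is to show that $T(\cS_{p,\rho(r)})$ misses $\cRbndry=[0,1]\times[0,\lambda^{-\sigma}]$. Here $\rho(r) = |r_1|+|r_2|+2 = \floor*{h}+4$. Any $q\in \cS_{p,\rho(r)}$ has $q_2 \ge p_2-\floor*{h}-4 \ge R+1-\floor*{h}-4 = 2$. So $T(q)$ lies in $\{y \ge (q_2-1)S\}\subseteq\{y\ge S\}$. Since $1/S<\lambda^\sigma$ we have $S>\lambda^{-\sigma}$, so $T(q)\cap \cRbndry=\emptyset$; note that tiles of $\cS_{p,\rho(r)}$ lying outside $[0,1]^2$ do not matter, since $\cRbndry \subseteq [0,1]^2$ and only their heights are relevant. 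This is where the specific constants $R=\floor*{h}+5$ and $S>\lambda^{-\sigma}$ are used: the margin $R-\rho(r)=1$ leaves at least one full tile row, of height $S>\lambda^{-\sigma}$, between the stabilization ball and the bottom edge of the square.

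With both parts of~\ref{stat.1} verified for every $p$ in the strip, $\cRbulk=T(\cB)=[0,1]\times[RS,1]$ is a compatible bulk. The restriction $\sigma\in(\frac12,\frac23)$ plays no role in this argument; it matters only later. The argument is essentially bookkeeping with integer tile indices, and the only point to get right is the off-by-one count between $R$, $\rho(r)$, and the tile rows.
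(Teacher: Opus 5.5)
Your proof is correct and follows essentially the same route as the paper: both verify condition~\ref{stat.1} for every tile with $p_2 \geq R+1 = \floor*{h}+6$, using $\rho(r) = \floor*{h}+4$ and $S > \lambda^{-\sigma}$. The only cosmetic difference is that the paper takes $\cB$ to be the set of \emph{all} tiles satisfying~\ref{stat.1} and identifies that set exactly, whereas you only check sufficiency for the strip tiles, which is all that Definition~\ref{def:properties} requires.
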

\begin{proof}
    Let $\cB$ be the collection of all squares in $\TT_{S}$ satisfying property~\ref{stat.1}, so that by definition $T(\cB)$ is a compatible bulk. 
 We need to show that $T(\cB) = [0,1] \times [RS,1]$.  First, by definition of $r$ and Definition~\ref{def:mu,qa}, we have 
$\cS_{p,\rho(r)} = \cS_{p,\floor*{h}+4}$.  Also,  since $\lambda^{-\sigma} < S$, the region $\cRbndry$ is contained in the bottom row of tiles of $\TT_{S}$.  Now for $p = (p_{1},p_{2}) \in \TT_{S}$ to satisfy~\ref{stat.1},
    we need $T(\cS_{p,\floor*{h}+4}) \cap \cRbndry = \emptyset$ which holds if $p_{2} - \floor*{h}-4 \geq 2$ since this guarantees that none of $\cS_{p,\lfloor h \rfloor +4}$ is in the bottom row of tiles in $\TT_{S}$.
    %(which necessarily contains $\cRbndry$ since $S> \lambda^{-\sigma}$). 
    For~\ref{stat.1} we also need $p+r \in \TT_{S}$ which requires $1 \leq p_{1} \leq \frac{1}{S}$ and $p_{2} - \floor*{h}-2 \geq 1$. Hence,
$\cB = \{
        p \in \Z^2 : 1 \leq p_{1} \leq \frac{1}{S}, \floor*{h}+6 \leq p_{2} \leq \frac{1}{S}
        \}$, 
   and then, recalling Definition~\ref{dfin-tiling-in-R^2}, we have  $T(\cB) = [0,1] \times [(\floor*{h}+5)S,1]$. %, see Figure~\ref{fig:obtuse-angle}.
\end{proof}

\begin{lemma}[Acute]\label{acute'lemma} 
    Set $\theta = \frac{\pi}{2}$ and $\phi \in (0 , \frac{\pi}{2})$. 
    Define $h$ as in 
    Proposition~\ref{prop:cone-companions}\ref{prop:cone-companions-b}.
    Let $C_{0} = \{r\} =\{(-(\floor*{h}+2),-1)\}$ and $\cRbndry=[0,1]\times[0,\lambda^{-\sigma}]$. Set $\delta = (\floor*{h}+2) \sec \phi$, $R = \max (\floor*{h}+6, \floor*{(\floor*{h}+2) \tan \phi} +2)$, and $R+1 \leq \frac{1}{S} < \lambda^{\sigma}$. Then, $\cRbulk=[0,1]\times[RS,1]$ is a compatible bulk.
\end{lemma}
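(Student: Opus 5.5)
The plan is to follow the proof of Lemma~\ref{obtuse'lemma}. I would show that every tile $p=(p_1,p_2)\in\TT_S$ with $p_2\ge R+1$ satisfies either~\ref{stat.1} or both~\ref{stat.2} and~\ref{stat.3}. Taking $\cB=\{p\in\TT_S: p_2\ge R+1\}$ then gives $T(\cB)=[0,1]\times[RS,1]$, which is a compatible bulk by Definition~\ref{def:properties}. This set is non-empty because $R+1\le 1/S$.

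Orientation is the first thing to fix. With $\theta=\pi/2$ the cone $\cCtp(\bx)$ is bounded by the leftward horizontal ray from $\bx$ and the ray obtained by rotating it through $\phi<\pi/2$ anticlockwise, which points down and to the left. Hence every $\by\in\cCtp(\bx)$ satisfies $y_1\le x_1$ and $0\le x_2-y_2\le (x_1-y_1)\tan\phi$. Also, since $\lambda^{-\sigma}<S$, the strip $\cRbndry$ lies inside the bottom row of tiles $p_2=1$, just as in the obtuse case. I would then split the tiles with $p_2\ge R+1$ according to $p_1$.

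\emph{Case $p_1\ge \floor*{h}+3$.} I would verify~\ref{stat.1} with $r=(-(\floor*{h}+2),-1)$, which is a companion by Proposition~\ref{prop:cone-companions}\ref{prop:cone-companions-b}.
\begin{itemize}
\item We have $p+r\in\TT_S$, since $p_1-\floor*{h}-2\ge 1$ and $p_2-1\ge 1$.
\item Definition~\ref{def:mu,qa} gives $\rho(r)=\floor*{h}+2+1+2=\floor*{h}+5$.
\item $T(\cS_{p,\rho(r)})$ avoids the bottom row of tiles as soon as $p_2-(\floor*{h}+5)\ge 2$, that is, $p_2\ge \floor*{h}+7$. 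This holds because $R\ge\floor*{h}+6$.
\end{itemize}

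\emph{Case $p_1\le \floor*{h}+2$.} Here the companion tile may fall outside the square, so I would use~\ref{stat.2} and~\ref{stat.3}.
\begin{itemize}
\item For~\ref{stat.2}: take $\bx\in T(p)$ and $\by\in\cCtp(\bx)\cap[0,1]^2$. Then $0\le x_1-y_1\le x_1\le (\floor*{h}+2)S$. Since the direction from $\bx$ to $\by$ makes an angle at most $\phi$ with the leftward horizontal, $\|\bx-\by\|\le (x_1-y_1)\sec\phi\le (\floor*{h}+2)S\sec\phi=\delta S$. So $\nu(p)\le\delta S$.
\item For~\ref{stat.3}: it suffices to consider points of the cone that lie in $[0,1]^2$, since $\cRbndry\subseteq[0,1]^2$. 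Any such $\by$ has $y_2\ge x_2-(\floor*{h}+2)S\tan\phi\ge (p_2-1)S-(\floor*{h}+2)S\tan\phi$. This exceeds $S>\lambda^{-\sigma}$ once $p_2>2+(\floor*{h}+2)\tan\phi$.
\item That inequality is guaranteed by $p_2\ge R+1\ge \floor*{(\floor*{h}+2)\tan\phi}+3$, which explains the second term in the definition of $R$.
\end{itemize}

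The main obstacle is the bookkeeping in~\ref{stat.3}. One must restrict to the part of the cone inside $[0,1]^2$, because the full cone extends downwards indefinitely. One must also check the floor arithmetic so that the strict inequality survives, including the edge case where $(\floor*{h}+2)\tan\phi$ is an integer. Everything else is a direct check against Definition~\ref{def:properties}.
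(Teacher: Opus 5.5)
Your proposal is correct and follows essentially the same route as the paper. It uses the same $\cB=\{p\in\TT_S: p_2>R\}$ and the same split at $p_1=\floor*{h}+2$. On one side it verifies~\ref{stat.1} via the companion $r$ with $\rho(r)=\floor*{h}+5$; on the other it verifies~\ref{stat.2} and~\ref{stat.3} via $\nu(p)\le(\floor*{h}+2)S\sec\phi$ and $x_2-y_2\le \ell S$, where $\ell=(\floor*{h}+2)\tan\phi$. Your floor-arithmetic worry is harmless, since $\floor*{\ell}+3>\ell+2$ always holds, including when $\ell$ is an integer.
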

\begin{proof}
Define $  \cB = \{
        p \in \TT_{S} : p_{2} > R
     \}$. We will show that all $p \in \cB$ either satisfy~\ref{stat.1} or both~\ref{stat.2} and~\ref{stat.3}, which implies that $ T(\cB) = \cRbulk$ is a compatible bulk.

     Suppose $p \in \cB$ has $p_{1} > \floor*{h}+2$, then $p+r \in \TT_{S}$ since $p_{1} + r_{1} > 0$ (and $p_{2} + r_{2} > R-1 > 0$). Also, since $\rho(r) = \floor*{h}+5$ and $p_{2} > R \geq \floor*{h}+6$, we have $T(\cS_{p,\floor*{h}+5}) \cap \cRbndry = \emptyset$ (because $p_{2} - \floor*{h}-5> 1$). Therefore $p$ satisfies~\ref{stat.1}.
     
     Otherwise, suppose $p \in \cB$ has $p_{1} \leq \floor*{h}+2$, then Definition~\ref{def:longestEuclidean} implies that $\nu(p) \leq (\floor*{h}+2)S \sec \phi = \delta S$ by choice of $\delta$, so $p$ satisfies~\ref{stat.2}. 
      To prove that $p$ satisfies~\ref{stat.3}, it is enough to show that for any $\bx \in T(p)$ any $\by = (y_1,y_2) \in \cCtp(\bx) \cap [0,1]^2$ has $y_{2} > \lambda^{-\sigma}$ (this forces $\by \notin \cRbndry$). But this follows, since $p_{2}> R \geq \floor*{\ell} +2$ where $\ell := (\floor*{h}+2) \tan \phi$, i.e., $p_{2} \geq \floor*{\ell} +3$, and $\bx = (x_1,x_2) \in T(p)$ means $x_{2} \geq  (p_{2}-1)S \geq (\floor*{\ell} +2)S$ and $x_{2} - y_{2} \leq \ell S \leq (\floor*{\ell } +1)S$. Then $y_{2} \geq S > \lambda^{-\sigma}$ so $\by \notin \cRbndry$, and therefore $p$ satisfies~\ref{stat.3}. Hence $T(\cB) = [0,1] \times [RS,1] = \cRbulk$ is a compatible bulk.
\end{proof}

\begin{lemma}[Unaligned]\label{unaligned'lemma}
   Set $0<\theta < \frac{\pi}{2}$ and $\phi < \pi$ with $\theta+\phi \notin \{\frac{\pi}{2},\pi\}$. Let $\cRbndry= \emptyset$ (no boundary) and let $\cRbulk= [0,1]^2$. 
   There exists a finite set $C_{0}$ of companions of  $\0$, large enough $\delta$ and small enough $S$ such that $\cRbulk$ is a compatible bulk. 
\end{lemma}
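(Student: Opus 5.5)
The approach is to exploit the fact that $\cRbndry=\emptyset$. With no boundary region, the second half of~\ref{stat.1} (that $T(\cS_{p,\rho(r)})\cap\cRbndry=\emptyset$) and all of~\ref{stat.3} hold automatically. It therefore suffices to find a finite set $C_0$ of companions of $\0$ and a constant $\delta$, both independent of $S$, such that every $p\in\TT_S$ satisfies one of the following:
\begin{itemize}
\item $p+r\in\TT_S$ for some $r\in C_0$;
\item $\nu(p)\le\delta S$.
\end{itemize}
Since $0<\theta<\frac{\pi}{2}$ and $\theta+\phi\notin\{\frac{\pi}{2},\pi\}$, we are in one of cases~\ref{prop:cone-companions-c}, \ref{prop:cone-companions-d}, \ref{prop:cone-companions-e} of Proposition~\ref{prop:cone-companions}. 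Take $C_0$ to be the companion(s) given there. In each case the tiles failing the first alternative lie within a bounded number of tile-widths of one or two sides of the square. The argument is then the same throughout: the cone is strictly tilted away from those sides, so $\cCtp(\bx)\cap[0,1]^2$ has diameter $O(S)$. We take $S$ small enough (e.g.\ $1/S$ larger than the $\ell^\infty$-length of every $r\in C_0$) so that the relevant index ranges are non-degenerate.

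\emph{Case~\ref{prop:cone-companions-d}} ($\frac{\pi}{2}<\theta+\phi<\pi$). Here $r=(-(\floor*{h}+2),0)$. If $p_1>\floor*{h}+2$ then $p+r\in\TT_S$. Otherwise every $\bx\in T(p)$ has $x_1\le(\floor*{h}+2)S$. The directions of the cone lie in $[\theta,\theta+\phi]\subset(0,\pi)$, so every $\by\in\cCtp(\bx)$ lies to the left of $\bx$, with $|x_2-y_2|\le |x_1-y_1|\max(\cot\theta,\cot(\pi-\theta-\phi))$. Since $y_1\ge 0$, this gives $\nu(p)\le(\floor*{h}+2)S\sqrt{1+\max(\cot\theta,\cot(\pi-\theta-\phi))^2}$, which is of the required form $\delta S$.

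\emph{Case~\ref{prop:cone-companions-e}} ($\theta+\phi>\pi$). There are two companions, one pointing left and one pointing down. A tile with $p_1>\floor*{h_1}+2$ or $p_2>\floor*{h_2}+2$ uses the corresponding companion. The remaining tiles form a bounded block in the bottom-left corner, with $x_1\le(\floor*{h_1}+2)S$ and $x_2\le(\floor*{h_2}+2)S$. Split the cone's directions into $[\theta,\frac{\pi}{2}]$, $[\frac{\pi}{2},\pi]$ and $[\pi,\theta+\phi]$; note $\theta+\phi<\frac{3\pi}{2}$.
\begin{itemize}
\item In the first range the ray moves left, and its upward displacement is at most $\cot\theta$ times the leftward one.
\item In the second range the displacement is both leftward and downward, so it is bounded by the corner box.
\item In the third range the ray moves down, and its rightward displacement is at most $\tan(\theta+\phi-\pi)$ times the downward one.
\end{itemize}
In all three ranges the displacement is $O(S)$, giving $\nu(p)\le\delta S$.

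\emph{Case~\ref{prop:cone-companions-c}} ($\theta+\phi<\frac{\pi}{2}$). The cone points strictly into the up-left quadrant. I expect this to be the main obstacle, because Proposition~\ref{prop:cone-companions}\ref{prop:cone-companions-c} only gives the existence of some companion $r$ with a bound on $d(\0,r)$, not its explicit form. First note that $T(r)\subseteq\cCtp(\bx)$ for all $\bx\in T(\0)$ forces $r_1\le 0\le r_2$; in fact $r_1\le -1$ and $r_2\ge1$, since a point of $T(r)$ must lie strictly left of and strictly above every corner of $T(\0)$. Then $p+r\in\TT_S$ holds whenever $p_1>|r_1|$ and $p_2\le 1/S-r_2$. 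The remaining tiles lie within $|r_1|S$ of the left side or within $r_2S$ of the top side.
\begin{itemize}
\item Near the left side, every $\by$ in the cone has leftward displacement at most $|r_1|S$ and upward displacement at most $\cot\theta$ times that.
\item Near the top side, the upward displacement is at most $r_2S$ and the leftward displacement at most $\tan(\theta+\phi)$ times that.
\end{itemize}
Both are finite because $0<\theta<\theta+\phi<\frac{\pi}{2}$. In each case taking $\delta$ to be the maximum of the resulting constants, and $\cB=\TT_S$, shows that $\cRbulk=T(\TT_S)=[0,1]^2$ is a compatible bulk.
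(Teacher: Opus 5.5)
Your proposal is correct and follows essentially the same route as the paper. With $\cRbndry=\emptyset$ you reduce to the dichotomy ``$p+r\in\TT_S$ for some $r\in C_0$, or $\nu(p)\le\delta S$'', take the companions from Proposition~\ref{prop:cone-companions}\ref{prop:cone-companions-c}--\ref{prop:cone-companions-e}, and bound $\nu(p)$ for the leftover tiles (near the left/top sides, the left side, or the bottom-left corner) by splitting the cone's angular range, just as the paper does. Your explicit verification that the case-\ref{prop:cone-companions-c} companion has $r_1\le -1$ and $r_2\ge 1$ makes precise the paper's brief appeal to ``the cone geometry''.
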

\begin{proof}
Since $\cRbndry= \emptyset$, condition~\ref{stat.1} reduces to $p+r \in \TT_{S}$ for some $r \in C_{0}$ and~\ref{stat.3} is trivially satisfied. Hence, we just need to show for all $p \in \TT_{S}$ that either $p+r \in \TT_{S}$ for some $r \in C_{0}$, or $\nu(p) \leq \delta S$. The argument for the $3$ types of unaligned cone are very similar but rely on different parts of Proposition~\ref{prop:cone-companions}, so we prove the $3$ cases sequentially.  

First consider Case 0 (corresponding to Proposition~\ref{prop:cone-companions}\ref{prop:cone-companions-c}).
Set $D = \lfloor {\frac{3}{\sqrt{2}}(1+\csc(\phi/2))} \rfloor +1$, and let $r$ be a companion with $ d(\0,r) \leq D$, which exists by Proposition~\ref{prop:cone-companions}\ref{prop:cone-companions-c}. Let $C_{0} = \{r\}$, let $\frac{1}{S} \geq D$ and define $\cC_{\text{left}} = \{p=(p_1,p_2) \in \Z^{2} : 1 \leq p_1 \leq D, 1\leq p_2 \leq \frac{1}{S}\}$ and $\cC_{\text{top}} = \{p=(p_1,p_2) \in \Z^{2} : 1 \leq p_1 \leq \frac{1}{S}, \frac{1}{S} - (D-1) \leq p_2 \leq \frac{1}{S}\}$.
If $p \in  \TT_{S} \setminus (\cC_{\text{left}} \cup \cC_{\text{top}})$, then $p+r \in \TT_{S}$ because $p+r \in \cS_{p,D}$ and (by the cone geometry) $p+r$ is  above and to the left of $p$.

If $p \in \cC_{\text{left}}$, then $\nu(p) \leq DS \csc \theta$ and if $p \in \cC_{\text{top}}$, then $\nu(p) \leq DS \sec (\theta + \phi)$. Hence taking $\delta = \max(D \csc \theta, D \sec (\theta+\phi))$, we have $\nu(p) \leq \delta S$ for all $p \in \cC_{\text{left}} \cup \cC_{\text{top}}$.

 Next consider Case 1 (corresponding to Proposition~\ref{prop:cone-companions}\ref{prop:cone-companions-d}).
    Set $D = \floor*{h} + 2$, where $h$ is as defined in Proposition~\ref{prop:cone-companions}\ref{prop:cone-companions-d}. Then $r = (-D,0)$ is a companion of  $\0$. Set $C_{0} =  \{r\}$, let $\frac{1}{S} \geq D$ and define 
$\cC_{\text{left}}$
as before. If $p \in  \TT_{S} \setminus \cC_{\text{left}}$, then  $p+r \in \TT_{S}$. 

If $p \in \cC_{\text{left}}$ then $\nu(p) = \max (\nu_{1}(p),\nu_{2}(p))$ where $\nu_{1}(p) = \sup\{
        \|\bx-\by\| : \bx \in T(p), \by \in \cCtp(\bx) \cap [0,1]^2, y_{2} \geq x_{2}
        \}$
and 
$\nu_{2}(p) = \sup\{
        \|\bx-\by\| : \bx \in T(p), \by \in \cCtp(\bx) \cap [0,1]^2, y_{2} \leq x_{2}
        \}$
    and $\nu_{i}(p) \leq DS \sec \phi_{i}$ for each $i = 1,2$, where $\phi_{1}, \phi_{2}$ are angles satisfying $0 < \phi_{i} < \frac{\pi}{2}$ and $\phi = \phi_{1}+ \phi_{2}$. Hence taking $\delta = \max(D \sec \phi_{1}, D \sec \phi_{2})$, we have $\nu(p) \leq \delta S$ for all $p \in \cC_{\text{left}}$.

Finally, consider Case 2 (corresponding to Proposition~\ref{prop:cone-companions}\ref{prop:cone-companions-e}).
Set $D = \max(\floor*{h_{1}} + 2, \floor*{h_{2}} + 2)$, where $h_{1}$ and $h_{2}$ are as defined in Proposition~\ref{prop:cone-companions}\ref{prop:cone-companions-e}. The squares $r_{1} = (-(\floor*{h_{1}} + 2),0)$ and $r_{2} = (0,-(\floor*{h_{2}} + 2))$ are both companions of  $\0$. Set $C_{0} = \{r_{1}, r_{2}\}$, let $\frac{1}{S} \geq D$ and define $\cC_{\text{left}}$ as before. Let $\cC_{\text{bottom}} = \{p=(p_1,p_2) \in \Z^{2} : 1 \leq p_1 \leq \frac{1}{S}, 1\leq p_2 \leq D\}$. If $p \in  \TT_{S} \setminus \cC_{\text{left}}$, then  $p+r_{1} \in \TT_{S}$ and if $p \in  \TT_{S} \setminus \cC_{\text{bottom}}$, then $p+r_{2} \in \TT_{S}$. Hence for all $p \in  \TT_{S} \setminus (\cC_{\text{left}} \cap \cC_{\text{bottom}})$, there is an $r \in C_{0}$ with $p+r \in \TT_{S}$.

If $p \in \cC_{\text{left}} \cap \cC_{\text{bottom}}$, then $\nu(p) = \max (\nu_{1}(p),\nu_{2}(p), \nu_{3}(p))$ where $\nu_{1}(p) = \sup\{
        \|\bx-\by\| : \bx \in T(p), \by \in \cCtp(\bx) \cap [0,1]^2, y_{2} \geq x_{2}
        \}$,
     $\nu_{2}(p) = \sup\{
        \|\bx-\by\| : \bx \in T(p), \by \in \cCtp(\bx) \cap [0,1]^2, y_{1} \geq x_{1}
        \}$, and 
        $\nu_{3}(p) = \sup\{
        \|\bx-\by\| : \bx \in T(p), \by \in \cCtp(\bx) \cap [0,1]^2, y_{1} \leq x_{1}, y_{2} \leq x_{2}
        \}$, and $\nu_{i}(p) \leq DS \sec \phi_{i}$ for each $i =1,2$, where $\phi_{1}, \phi_{2}$ are angles satisfying $0 < \phi_{i} < \frac{\pi}{2}$ and $\phi = \phi_{1}+ \phi_{2} + \frac{\pi}{2}$. Also $\nu_{3}(p) \leq \sqrt{2} DS$. Hence taking $\delta = \max (D \sec \phi_{1}, D \sec \phi_{2}, \sqrt{2}D)$, then $\nu(p) \leq \delta S$ for all $p \in \cC_{\text{left}} \cap \cC_{\text{bottom}}$.      
\end{proof}

The above constructions give  a compatible bulk in every cone geometry. We are thus in a position to  conclude this section by formalizing the definitions of the three regions of the unit square that we use in our subsequent analysis.

\begin{definition}\label{def:3-regions}
Let $(\theta,\phi)$ be singly-aligned. Let $S > \lambda^{-\sigma}$, and $\sigma \in (\frac{1}{2},\frac{2}{3})$, and define
\begin{align}
\cRbulk &:= \left[0,1 \right] \times [RS,1] ;
\label{eq:R-1-def}  \\
\cRint &:= \left[0,1 \right] \times \left[\lambda^{-\sigma}, RS\right]  ;
\label{eq:R-2-def} \\
\cRbndry &:= \left[0,1 \right]  \times \left[0, \lambda^{-\sigma}\right],
\label{eq:R-3-def}
\end{align}
where $R= R(\theta,\phi) \in \ZP$ is as specified in the construction of
the compatible bulk described in  Lemmas~\ref{obtuse'lemma} and~\ref{acute'lemma}, in the obtuse and acute cases, respectively. Otherwise, if $(\theta,\phi)$ is unaligned, we take $\cRbulk = [0,1]^2$ and $\cRint = \cRbndry = \emptyset$, so that $\cRbulk$ is a compatible bulk according to Lemma~\ref{unaligned'lemma}.
In every case, we assume  that $\lambda$ is sufficiently large and $S$ chosen so that the conditions of the relevant Lemma~\ref{obtuse'lemma}--\ref{unaligned'lemma} hold.
\end{definition}

Recall  that $\cP_\lambda$ is a homogeneous Poisson point process on $[0,1]^2$ with intensity $\lambda$. The next definition is to define the total length for all regions (recall Definition~\ref{regions}) in $[0,1]^2$. 

\begin{definition}\label{allregions}
    For $* \in \{ \mathrm{blk}, \mathrm{itm}, \mathrm{bdy} \}$, define 
    $\cL^{*}_{\lambda} := \cL(\cP_{\lambda},\cR^{*}_{\lambda})$,
    and $\widetilde{\cL}_\lambda^{*} := \cL^{*}_{\lambda} - \Exp[\cL^{*}_{\lambda}]$, its centred version.
\end{definition}

\section{Gaussian limits from the bulk}
\label{sec:bulk}

\subsection{Introduction}

In this section we prove a CLT for the contribution to the total edge length in the MDSF from edges coming from the \emph{bulk} (Theorem~\ref{thm:bulk-clt} below). The bulk takes up, asymptotically, all but a vanishing fraction of the area of the unit square. The precise definition of the bulk depends on the parameters $\theta, \phi$ specifying the directional constraint; recall from  Definition~\ref{def:3-regions} that $\cRbulk = \left[0,1 \right] \times [RS,1]$, for the constant $R \in \ZP$ given in Lemmas~\ref{obtuse'lemma},~\ref{acute'lemma}, and~\ref{unaligned'lemma}, and a scale parameter $S$ that we will choose appropriately in terms of $\lambda$. The associated total edge length (see Definition~\ref{allregions}) is $\cLbulk$. Without loss of generality, from this point onwards in the paper we will always assume that if $(\theta, \phi)$ is singly-aligned, then $\theta = \pi/2$.

We now specify the scale parameter $S$ in the definition of the bulk. 
For $c>0$ define
\begin{equation}
    \label{eq:a-b-def}
b(c,\lambda) := \sqrt{\frac{c \log \lambda}{\lambda}}, \text{ for $\lambda \geq 1$, and }
a(c,\lambda) := \min \left[ \frac{1}{2}, b(c,\max(3, \lambda)) \right] \text{ for } \lambda >0. 
\end{equation}
Then set
\begin{equation}
    \label{eq:S-choice}
    S := S(c,\lambda ) = \frac{1}{\lfloor 1/a (c,\lambda) \rfloor}.
\end{equation}
The definitions through~\eqref{eq:a-b-def}--\eqref{eq:S-choice} ensure that $S \sim \sqrt{ c \lambda^{-1} \log \lambda}$ as $\lambda \to \infty$, that $1/S$ is an integer, and that $S$ is well-behaved even when $\lambda$ is small. Note that, for all $\lambda$ sufficiently large, 
$S$ satisfies the conditions of Lemmas~\ref{obtuse'lemma}--\ref{unaligned'lemma}.
 
\begin{theorem}
\label{thm:bulk-clt}
For all $\theta \in [0,2\pi)$, $\phi \in (0,\pi)$ either singly-aligned or unaligned, there exist positive constants $s_\phi^2$ (depending only on $\phi$) and $c_{0}$ such that the following holds.
Take $c > c_0$ and define $S = S (c,\lambda)$ by~\eqref{eq:S-choice}. 
Let $\cRbulk$ be the compatible bulk for $\theta,\phi$ specified in Lemmas~\ref{obtuse'lemma}--\ref{unaligned'lemma}, i.e., $\cRbulk = \left[0,1 \right] \times [RS,1]$, where $R = 0$ (unaligned) or $R = R(\theta,\phi) \in \N$ (singly-aligned).  Then
 \begin{thmenumi}[label=(\roman*)]
    \item As $\lambda \to \infty$,
\begin{equation}
\label{eq:bulk-clt}
\tcLbulk \tod \cN(0,s^2_\phi), \text{ and}
\end{equation}
    \item
\begin{equation}
\label{eq:bulk-variance-limit}
   \lim_{\lambda \to \infty}\Var \cLbulk  = s_{\phi}^{2}. 
\end{equation}
\end{thmenumi}
\end{theorem}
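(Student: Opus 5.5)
We plan to apply Penrose's CLT for sums of random variables with local dependence on a grid, as in \cite{mp-gaussian}, to the tile decomposition $\cLbulk = \sum_{p \in \cB} \xi_p$, where $\xi_p := \sum_{\bx \in \cP_\lambda \cap T(p)} \cDtp(\bx,\cP_\lambda)$ and $\cB$ is the tile set of the compatible bulk.

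\textbf{Step 1: localized surrogate.} For each $p \in \cB$, define $\xi_p' := \sum_{\bx \in \cP_\lambda \cap T(p)} \cDtp\bigl(\bx, \cP_\lambda \cap T(\cS_{p,R_0})\bigr)$, where $R_0 := \max\{\max_{r\in C_0}\rho(r), \floor*{\delta}+1\}$. By construction, $\xi_p'$ depends only on $\cP_\lambda \cap T(\cS_{p,R_0})$, so the family $(\xi_p')$ has a dependency graph with bounded degree $(2R_0+1)^4$.

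Consider the event $E_\lambda$ that every tile $T(p+r)$, $p \in \TT_S$, $r \in C_0$, contains a Poisson point. Its complement has probability at most $\lambda S^{-2} e^{-\lambda S^2} \le \lambda^{2-c}$, since $\lambda S^2 \sim c\log\lambda$. Choosing $c_0$ large makes this $O(\lambda^{-k})$ for any fixed $k$. On $E_\lambda$, every $p \in \cB$ either satisfies~\ref{stat.1}, and Corollary~\ref{def:locall-finite-set} gives $\xi_p = \xi_p'$, or satisfies~\ref{stat.2}, and Lemma~\ref{Lemma443} gives the same conclusion. Hence $\cLbulk = \sum_p \xi_p'$ on $E_\lambda$.

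On $E_\lambda^\rc$, crude bounds suffice: $|\xi_p|, |\xi_p'| \le \sqrt2 \cdot \card(\cP_\lambda \cap T(p))$. So $\Exp[(\cLbulk - \sum_p \xi'_p)^2] \le \Pr(E_\lambda^\rc)^{1/2}\, O(\lambda^2) \to 0$ by Cauchy--Schwarz. It therefore suffices to prove both parts for $\sum_p \xi'_p$.

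\textbf{Step 2: moment bounds and CLT.} On $E_\lambda$, $\xi_p' \le \card(\cP_\lambda\cap T(p))\cdot O(S)$. Since the Poisson count in a tile has mean $\lambda S^2 = O(\log \lambda)$, we get $\Exp|\xi_p'|^q = O(S^q(\log\lambda)^q)$ for each fixed $q$. Penrose's normal approximation bound for dependency graphs (Stein's method, as in Baldi--Rinott/Chen--Shao) then gives a Kolmogorov distance of order
\[
\frac{|\cB|\,D^2\, \max_p \Exp|\xi_p'|^3}{(\Var \sum_p \xi'_p)^{3/2}} = O\!\left(\frac{S^{-2}\, S^{3}(\log\lambda)^3}{1}\right) = O\bigl(\lambda^{-1/2}(\log\lambda)^{7/2}\bigr) \to 0,
\]
provided the variance stays bounded below. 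Thus part (i) follows from part (ii) with $s_\phi^2>0$.

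\textbf{Step 3: variance asymptotics (main obstacle).} Write $\Var\cLbulk = \sum_{\bx}$-type Mehlin/Palm integrals. By scaling, with $\bx$ at distance $\gg \lambda^{-1/2}$ from the relevant sides of the square, $\lambda^{1/2}\cDtp(\bx,\cP_\lambda)$ is distributed as the directed nearest-neighbour distance in a unit-intensity Poisson process on $\R^2$, up to exponentially small error. The standard stabilizing-functional variance formula (Penrose--Yukich; Baryshnikov--Yukich) then yields
\[
\Var \cLbulk \to \Exp[\xi(\0,\cP)^2] + \int_{\R^2}\bigl(\Exp[\xi(\0,\cP\cup\{\bz\})\,\xi(\bz,\cP\cup\{\0\})] - (\Exp \xi(\0,\cP))^2\bigr)\,\ud\bz =: s_\phi^2 .
\]
The integrand decays exponentially because of the companion-based stabilization. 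The total contribution near the boundary, a strip of width $O(S)$ with $O(S\lambda)$ points each of variance $O(\lambda^{-1})$, is $o(1)$; covariance terms are handled the same way using the bounded dependency range. Rotation invariance of the Poisson process shows that $s_\phi^2$ depends only on $\phi$.

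The hardest part is proving $s_\phi^2>0$. We would try a conditioning/resampling argument in the spirit of Avram--Bertsimas. Fix a configuration in a small box which, with positive probability, can be locally modified, for example by moving one point, so that the total length changes by an amount bounded below, while the outside is unaffected because of stabilization. Summing over $\asymp\lambda$ disjoint boxes gives $\Var \ge c' \lambda \cdot \lambda^{-1} > 0$ via the conditional variance lower bound.
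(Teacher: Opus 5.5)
Your argument is correct in outline, but for part (i) it takes a different route from the paper. The paper does not use the tile decomposition in Section~\ref{sec:bulk}. It verifies the hypotheses of Theorems~2.1 and~2.2 of~\cite{mp-gaussian} for the rescaled functional $\cDtpl$: translation invariance, homogeneous stabilization, the uniform moment bounds of Proposition~\ref{prop:moments-bounds}, and power-law stabilization from Lemmas~\ref{lem:tau-lambda} and~\ref{lem:tau-moments}, which is where $c_0$ comes from. This gives the CLT and the variance limit together. Positivity of $s_\phi^2$ is Proposition~\ref{prp:proof1.5.4.}, whose proof is omitted with a pointer to the same Avram--Bertsimas argument you sketch~\cite{avram-bertsimas}. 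The dependence on $\phi$ alone comes from rotation invariance, as in your proposal.

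You instead use the companions and compatible bulk of Section~\ref{sec:partioning-the-square} to get exact finite-range dependence on a high-probability event, and then apply a Stein-type dependency-graph bound. This is essentially the Avram--Bertsimas strategy, and it is the use that condition~\ref{cond1}) in Section~\ref{sec:unit-square} anticipates.
\begin{itemize}
\item What your route buys: part (i) becomes elementary and self-contained, with an explicit rate, and you only need a crude choice of $c$ rather than the power-law exponent $q=304$.
\item What it costs: it only reduces (i) to a variance lower bound. You must then prove (ii) by hand: the Mecke formula, coupling with a Poisson process on $\R^2$ for points far from the sides, dominated convergence, and a negligible boundary strip. That is exactly what Theorem~2.2 of~\cite{mp-gaussian} supplies in the paper.
\end{itemize}

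Two small corrections are needed.

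First, as written $E_\lambda$ asks for Poisson points in tiles $T(p+r)$ that may lie outside $[0,1]^2$, so it has probability zero. Require instead that every tile of $\TT_S$ be occupied. This gives $\Pr(E_\lambda^\rc)\le S^{-2}\re^{-\lambda S^2}\le\lambda^{1-c}$, and it suffices because property~\ref{stat.1} forces $p+r\in\TT_S$.

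Second, in Step~3 the integrable decay of the rescaled covariance integrand does not come from the companions. Their range is $\asymp\sqrt{c\log\lambda}$ in rescaled units, so on their own they would only give an $O(\log\lambda)$ bound on the integral. The decay comes from sector-area tail bounds of the type in Lemma~\ref{lem:tau-lambda}. Also, for points at distance $KS$ from the sides, the discrepancy with the infinite-plane functional is of order $\lambda^{-bcK^2}$. This is polynomially rather than exponentially small in $\lambda$, which is harmless once $c$ is large.
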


Theorem~\ref{thm:bulk-clt} suffices to prove our main result,
Theorem~\ref{thm:generalresult} in the case of unaligned cones, when the limit is purely Gaussian, and forms a key ingredient in the singly-aligned case, when non-Gaussian boundary contributions must also be considered. 

\begin{proof}[Proof of Theorem~\ref{thm:generalresult}\ref{thm:generalresult-unaligned}]
In the unaligned case, $\cRbulk = [0,1]^2$ (see Lemma~\ref{unaligned'lemma})
and so Theorem~\ref{thm:bulk-clt} implies the result.
\end{proof}

The aim of the rest of this section is to prove Theorem~\ref{thm:bulk-clt}. The proof is presented in 
Section~\ref{sec:clt-proof} below. The proof uses the methodology of \emph{stabilization} and appeals to results of Penrose~\cite{mp-gaussian}; the necessary definitions and technical estimates are presented in    Section~\ref{sec:stabilization}. Our approach is somewhat different from that in~\cite{pw-south-west}, which adapted results from~\cite{penrose-yukich-clt} (the results of~\cite{mp-gaussian} being not yet available).

\subsection{Stabilization estimates}
\label{sec:stabilization}

Stabilization is a well-established framework for quantifying and exploiting local dependence
in problems in geometric probability, with a variety of related formulations that have been successfully applied to many models; see e.g.~\cite{penrose-percolation,MR2165584,penrose-yukich-clt,mp-gaussian,MR2201885,bhatt-roy}, among others. 
We will use the formulation and results from~\cite{mp-gaussian}, and so closely align our notation with that paper. We recall the definitions and results that we need in this section.

We will consider a family $\Omega_\lambda \subseteq [0,1]^2$ of sets that converge to $\Omega := [0,1]^2$, which will determine which Poisson points contribute scores; we will ultimately take $\Omega_\lambda = \cRbulk$, the bulk specified in Theorem~\ref{thm:bulk-clt}, but for now we state only the key properties of $\Omega_\lambda$ that will be used. 
Take 
\begin{equation}
    \label{eq:Omega-lambda} 
    \Omega_\lambda := [0,1] \times [\alpha_{\lambda},1],
\end{equation}
where there are two cases, namely  
 \begin{itemize}
     \item if ($\theta, \phi$) is unaligned, when $\alpha_\lambda \equiv 0$ for all $\lambda$, 
     \item if ($\theta, \phi$) is singly aligned, when $\alpha_{\lambda} \in (0,1/2)$ is any sequence such %\red{that $\alpha_{\lambda} \sqrt{\lambda} \geq 2$ for all $\lambda$}, and
          that $\lim_{\lambda \to \infty} \alpha_{\lambda} = 0$ and $\alpha_{\lambda} > \sqrt{\frac{c \log \lambda}{\lambda}}$ for large enough $\lambda$, where $c> c_{0}$ will be chosen sufficiently large later in this section.
     \end{itemize}
 The latter case includes the choice $\alpha_{\lambda} = RS$ 
 with $S = S(c,\lambda)$ as in Theorem~\ref{thm:bulk-clt}. Indeed,  for $S = 1/ \floor*{ 1 / {a_{\lambda}}}$, we have $Ra_{\lambda} \leq \alpha_{\lambda} = RS = 
 \frac{R}{\floor*{ 1 / {a_{\lambda}}}} \leq
\frac{R}{ (1 / {a_{\lambda}} )-1} = \frac{Ra_{\lambda}}{1-a_{\lambda}}
 \leq 2Ra_{\lambda}$, meaning  $\alpha_{\lambda}$ and $a_{\lambda}$ exhibit, up to multiplicative constants,  the same asymptotics. 

 Recall from Definition~\ref{def:half-cone-closed} that $\cCtp(\bx)$ is a cone with apex at $\bx \in \R^2$ formed as a union of rays from $\bx$ with angle $\alpha \in [\theta,\theta + \phi]$, measured anticlockwise from the vertical direction, and $\cCotp (\bx)$ is the same cone but with the apex at $\bx$ deleted. 
Recall  from Definition~\ref{def:directed-nn} that, 
for $\cX \subset \R^2$ locally finite, and $\bx \in \R^2$,
$\cDtp(\bx, \cX)$ denotes the distance from point $\bx$ to the nearest neighbour to $\bx$ among $\cX \cap \cCotp(\bx)$, unless there is no such neighbour, in which case $\cDtp (\bx, \cX) = 0$. 
The goal of this section is to develop some \emph{stabilization} estimates for $\cDtp$ which quantify its \emph{local dependence} when acting on Poisson points. 

From the underlying norm,  $\cDtp$ inherits  \emph{homogeneity} under scaling of its arguments, i.e.,
\begin{equation}
\label{eq:homogeneous}
    \cDtp(s\bx,s\cX) = s\cDtp(\bx,\cX),  \text{ for all } s > 0. 
\end{equation}
Moreover, 
 $\cDtp$ is \emph{translation invariant}, meaning that
\begin{equation}
\label{eq:translation-invariant}
     \cDtp (\bx, \cX) =  \cDtp (\by + \bx, \by + \cX),   \text{ for all } \by \in \R^2. 
\end{equation}
(Here and elsewhere, for scalar $s>0$, vector $\by \in \R^2$ and $\cX \subseteq \R^2$, $\by + s \cX := \{ \by + s \bx : \bx \in \cX\}$.)

The Poisson process $\cP_\lambda$ has intensity $\lambda$ (which we think of as large, since we are interested in $\lambda \to \infty$), so that the scaled-up process $\lambda^{1/2} \cP_\lambda$ has unit intensity. Thus to study the collection $\cDtp (\bx, \cP_\lambda)$ over $\bx \in \cP_\lambda$ (at least in the bulk $\Omega_\lambda$) on a correctly-calibrated scale, it is useful to define a re-scaled version of $\cDtp$ as follows.
\begin{definition}
\label{def:nn-dist-scaled}
For $\lambda >0$, define $\cDtpl$ for locally finite $\cX \subseteq \R^2$ and $\bx \in \R^2$, 
 \begin{align}
     \cDtpl (\bx,\cX) := \cDtp(\lambda^{1/2}\bx, \lambda^{1/2} \cX) \ind_{\Omega_\lambda}(\bx) = \lambda^{1/2} \cDtp(\bx,\cX)\ind_{\Omega_\lambda}(\bx) , \label{eq:lambda-scaling} 
 \end{align}
where we have used~\eqref{eq:homogeneous} for the last equality in~\eqref{eq:lambda-scaling}.
\end{definition}

With Definition~\ref{def:nn-dist-scaled}, we can then use~\eqref{eq:lambda-scaling} to express the total bulk length as
\begin{equation}
\label{eq:bulk-scaled}
     \cLbulk = \sum_{\bx \in \cP_{\lambda} \cap \Omega_\lambda} \cDtp (\bx, \cP_{\lambda}) = \lambda^{-1/2} \sum_{\bx \in \cP_{\lambda}} \cDtpl(\bx,\cP_{\lambda}),
 \end{equation}
where now, roughly-speaking, the $O(\lambda)$-many terms in the right-most sum in~\eqref{eq:bulk-scaled}
are random-variables of size $O(1)$,
and, for large $\lambda$, each $\cDtpl(\bx,\cP_{\lambda})$ looks roughly like
$\cDtp$ evaluated at a point of an \emph{infinite} unit-intensity Poisson process on $\R^2$.
The key ingredient in establishing central limit theorem and variance asymptotics  is to appropriately quantify the local dependence in the sum on the right-hand side of~\eqref{eq:bulk-scaled}. This is the role of the radius of stabilization that we define next.

Recall that $\Vert \cdot \Vert$ is the Euclidean norm on $\R^2$. Let 
\begin{equation}
    \label{eq:ball-def}
B_r(\bx) := \{ \by \in \R^2 :  \Vert \by-\bx \Vert \leq r \}, 
\end{equation}
 the closed Euclidean ball centred at $\bx \in \R^2$ with radius $r >0$.   The next definition follows~\cite{mp-gaussian}. 

\begin{definition}
\label{def:radius-of-stabilization}
\begin{enumerate}[label=(\roman*)]
    \item\label{def:radius-of-stabilization-i}
The radius of stabilization $R(\bx, \cX, A)$ of $\cDtp$ at $\bx \in \R^2$ with respect to the locally finite set $\cX \subseteq \R^2$ and $A \subseteq \R^2$ is defined to be the smallest $r \in \ZP$ such that
\begin{align}
\label{eq:radius-stabilization}
    \cDtp (\bx, \cX \cap B_r(\bx)) = \cDtp(\bx,(\cX \cap B_r(\bx)) \cup \cY), \text{ for all finite } \cY \subseteq A \setminus B_r(\bx);
\end{align}
if no such $r$ exists, set $R(\bx, \cX, A) = \infty$.
Also set $R (\bx , \cX ) := R (\bx , \cX , \R^2 )$.
\item\label{def:radius-of-stabilization-ii}
For $\lambda >0$ and $\bx \in [0,1]^2$, with $R(\bx,  \cX, A)$  as  in part~\ref{def:radius-of-stabilization-i}, define
\begin{equation}
\label{eq:radius-stabilization-scaled}
    R_\lambda(\bx) := R(\lambda^{1/2} \bx, \lambda^{1/2} \cP_{\lambda}, [0,\lambda^{1/2}]^2) .
\end{equation}
\end{enumerate}
\end{definition}

 It is useful to observe a deterministic upper bound on $\cDtp (\bx, \cX)$:  for $\bx \in [0,1]^2$  define 
    \begin{equation}
    \label{eq:nu-def}
        \nu_{\theta,\phi}(\bx) := \sup\{
        \|\bx - \by\| : \by \in \cCtp(\bx) \cap [0,1]^2
        \}, 
    \end{equation} then $\cDtp (\bx, \cX)  \leq \nu_{\theta,\phi}(\bx)$ for all $\bx \in [0,1]^2 $  and $ \cX \subseteq [0,1]^2$.
Definition~\ref{def:radius-of-stabilization} immediately yields the following properties for $R(\bx, \cX)$ and $R_{\lambda}(\bx)$.

\begin{lemma}
\label{lem:R-to-D-relations}
For all locally finite $\cX \subseteq \R^2$ and all $\bx\in \R^2$, it holds that
\[
   R(\bx, \cX) = 
\begin{cases}
\lceil \cDtp(\bx,\cX )\rceil & \text{if}\ \cDtp(\bx,\cX ) > 0, \\  
 \infty & \text{if}\ \cDtp(\bx,\cX ) = 0. 
\end{cases}
\]
For all $\bx \in [0,1]^2$ and all $\lambda >0$, it holds that
\[
       R_{\lambda}(\bx)
 \begin{cases}
  = \lceil \lambda^{\frac{1}{2}} \cDtp(\bx,\cP_{\lambda})\rceil & \text{if}\  \cDtp(\bx,\cP_{\lambda}) > 0, \\ 
 \leq \lceil \lambda^{\frac{1}{2}} \nu_{\theta,\phi}(\bx) \rceil & \text{if}\ \cDtp(\bx,\cP_{\lambda}) = 0. 
\end{cases}
\]
\end{lemma}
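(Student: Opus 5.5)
The proof works directly from Definition~\ref{def:radius-of-stabilization}. For the first statement, fix a locally finite $\cX$ and $\bx$, and treat the cases $\cDtp(\bx,\cX)>0$ and $\cDtp(\bx,\cX)=0$ separately.

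\emph{Case $d := \cDtp(\bx,\cX) > 0$.} Let $r = \lceil d \rceil$.

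First we show that $r$ satisfies~\eqref{eq:radius-stabilization}. The nearest neighbour $N_{\theta,\phi}(\bx,\cX)$ lies in $\cX \cap \cCotp(\bx)$ at distance $d \le r$, so it lies in $B_r(\bx)$. Hence $\cDtp(\bx,\cX\cap B_r(\bx)) = d$. Any added point $\by \in \cY \subseteq \R^2\setminus B_r(\bx)$ has $\|\by-\bx\|>r\ge d$, so it cannot decrease the infimum. Therefore the value stays $d$.

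Next we show minimality. Take any integer $r' < r$; then $r' < d$. Then $\cX \cap B_{r'}(\bx) \cap \cCotp(\bx) = \emptyset$, because every point of $\cX$ in the cone is at distance at least $d$. So $\cDtp(\bx, \cX\cap B_{r'}(\bx)) = 0$. Now choose $\cY = \{\by\}$ with $\by \in \cCotp(\bx)\setminus B_{r'}(\bx)$, for instance on the central ray at distance $r'+1$. This changes the value to something positive. So $r'$ fails (with $A=\R^2$), and $R(\bx,\cX)=\lceil d\rceil$.

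\emph{Case $\cDtp(\bx,\cX)=0$.} Then the cone $\cCotp(\bx)$ contains no point of $\cX$. For every $r\in\ZP$, the left side of~\eqref{eq:radius-stabilization} is $0$. Adding a single point of the cone outside $B_r(\bx)$, which always exists since the cone is unbounded in $\R^2$, makes the right side positive. So no $r$ works, and $R=\infty$.

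For the scaled statement we apply the same reasoning to $\lambda^{1/2}\bx$ and $\lambda^{1/2}\cP_\lambda$, with $A = [0,\lambda^{1/2}]^2$, using the homogeneity~\eqref{eq:homogeneous}. This gives $\cDtp(\lambda^{1/2}\bx,\lambda^{1/2}\cP_\lambda)=\lambda^{1/2}\cDtp(\bx,\cP_\lambda)$.

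\emph{Positive case.} The proof that $r=\lceil\lambda^{1/2}\cDtp(\bx,\cP_\lambda)\rceil$ stabilizes is unchanged. Minimality needs a witness $\by$ in $A\setminus B_{r'}$ lying in the cone. Such a point exists: the actual nearest neighbour $\lambda^{1/2}N_{\theta,\phi}(\bx,\cP_\lambda)$ is itself in $[0,\lambda^{1/2}]^2 \cap \cCotp$ at distance greater than $r'$, so taking $\cY$ to be that point works.

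\emph{Zero case.} We only need an upper bound. Take $r = \lceil \lambda^{1/2}\nu_{\theta,\phi}(\bx)\rceil$. Every point of $A$ lying in the cone at $\lambda^{1/2}\bx$ is within distance $\lambda^{1/2}\nu_{\theta,\phi}(\bx)\le r$ of $\lambda^{1/2}\bx$, by scaling~\eqref{eq:nu-def}. So every $\cY \subseteq A\setminus B_r$ misses the cone, and both sides of~\eqref{eq:radius-stabilization} equal $0$. Hence $R_\lambda(\bx) \le r$.

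The only delicate point is the restricted set $A$ in the scaled version. It is why the zero case gives an inequality rather than $\infty$, and why minimality in the positive case must use a witness inside $A$.
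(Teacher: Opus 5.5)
Your proof is correct and follows the same route as the paper. The paper gives no separate argument, only asserting that the lemma follows immediately from Definition~\ref{def:radius-of-stabilization}; your direct verification from that definition (that $\lceil d\rceil$ stabilizes while every smaller integer fails, and that no $r$ works when the cone is empty) is exactly that check. You also correctly handle the one point the paper leaves implicit: in the scaled positive case the minimality witness must lie in $A=[0,\lambda^{1/2}]^2$, and the rescaled nearest neighbour itself supplies it.
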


The following monotonicity property for $R$ will be useful.
\begin{lemma}
\label{lem:R-monotonicity}
    For all $A \subseteq \R^2$, all  $\cX \subseteq \R^2$, and all $\bx,\by\in \R^2$, 
   $ R(\bx, \cX, A) \geq R(\bx, \cX \cup \{\by\},A)$. 
\end{lemma}
\begin{proof}
 Fix $\bx,\by\in \R^2$ and $\cX \subseteq \R^2$.
 We show that for each $r \in \ZP$ if condition~\eqref{eq:radius-stabilization} (from Definition~\ref{def:radius-of-stabilization}\ref{def:radius-of-stabilization-i}) holds for the set $\cX$, then it also holds for the set  $\cX \cup \{\by\}$. First suppose~\eqref{eq:radius-stabilization} holds for $\cX$, with $\cDtp(\bx,\cX \cap  B_r(\bx)) =0$. This requires that $((\cX \cap  B_r(\bx) ) \cup \cY) \cap \cCotp(\bx) = \emptyset$ for all finite $ \cY \subseteq A \setminus B_r(\bx)$. Hence, for all finite $\cY \subseteq A \setminus B_r(\bx)$,  
\begin{align*}
    \cDtp(\bx, ((\cX \cup \{\by\}) \cap  B_r(\bx)) \cup \cY) = \cDtp(\bx,   \{\by\} \cap B_r(\bx))
    &= \cDtp(\bx, (\cX\cup \{\by\}) \cap B_r(\bx)).
\end{align*}
 On the other hand, suppose that  $\cDtp(\bx, \cX \cap B_r(\bx)) > 0$, so  $\cX \cap B_r(\bx) \cap \cCotp(x) \neq \emptyset$, hence also
 $(\cX \cup \{\by\}) \cap B_r(\bx) \cap \cCotp (x) \neq \emptyset$. Then for all $\cY \subseteq A \setminus B_r(\bx)$, the nearest neighbour of $\bx$  in $((\cX \cup \{\by\}) \cap B_r(\bx))\cup \cY$ cannot be in $\cY$, so 
\[
    \cDtp(\bx, ( (\cX \cup \{\by\}) \cap B_r(\bx) )\cup \cY ) = \cDtp(\bx,(\cX \cup \{\by\}) \cap B_r(\bx)). 
\]
So, in either case,~\eqref{eq:radius-stabilization} holds  for $\cX \cup \{\by\}$. Hence the infimum over $r$ satisfying~\eqref{eq:radius-stabilization} for $\cX \cup \{\by\}$ is less than or equal to the  infimum of $r$ satisfying condition~\eqref{eq:radius-stabilization} for $\cX$. 
 \end{proof}
 
The limit theorems associated with $\cLbulk$ require 
certain moments conditions on $\cDtpl(\bx,\cP_{\lambda})$, 
the scaled nearest-neighbour distance from $\bx \in \Omega_\lambda$ to its nearest point of
the Poisson process $\cP_{\lambda}$,
and on $\cDtpl(\bx,\cP_{\lambda} \cup \{\by\})$, where an additional point $\by \in \Omega_\lambda$ is inserted;
the required estimates are uniform over $\bx, \by$ and $\lambda$.
This is the purpose of the next result.

\begin{proposition}[Moments bounds]
\label{prop:moments-bounds}
Let $p >0$. Take $\Omega_\lambda$ as defined in~\eqref{eq:Omega-lambda}.
In the unaligned case, the moments bounds~\ref{prop:moments-bounds-a} and~\ref{prop:moments-bounds-b} hold.
In the singly-aligned case, there exists $c_{\phi,p} < \infty$ such that~\ref{prop:moments-bounds-a} and~\ref{prop:moments-bounds-b} hold for all $c$ in the definition of $\alpha_\lambda$ satisfying $c > c_{\phi,p}$. 
\begin{enumerate}[label=(\alph*)] % (a), (b), (c), ...
\item
\label{prop:moments-bounds-a}
$\sup_{\lambda \geq 1}\sup_{\bx \in \Omega_\lambda} \Exp[ (\cDtpl (\bx,\cP_{\lambda}) )^p ] < \infty$,
 \item
 \label{prop:moments-bounds-b}
 $\sup_{\lambda \geq 1}\sup_{\bx,\by \in \Omega_\lambda} \Exp[ (\cDtpl (\bx,\cP_{\lambda} \cup \{\by\}) )^p ] < \infty$.
\end{enumerate}
\end{proposition}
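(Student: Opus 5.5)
We bound the tail probability $\Pr[\cDtpl(\bx,\cP_\lambda) > t]$ uniformly in $\bx \in \Omega_\lambda$ and $\lambda \geq 1$, and then integrate: $\Exp[(\cDtpl)^p] = \int_0^\infty p t^{p-1}\Pr[\cDtpl > t]\,\ud t$.

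\emph{Tail bound from the empty region.} The event $\{\cDtpl(\bx,\cP_\lambda) > t\}$ requires $\lambda^{1/2}\cDtp(\bx,\cP_\lambda) > t$. This forces $\cP_\lambda$ to have no point in $\cCtp(\bx)\cap B_{t\lambda^{-1/2}}(\bx)\cap[0,1]^2$, and also requires $t < \lambda^{1/2}\nu_{\theta,\phi}(\bx)$. Hence
\[
\Pr[\cDtpl(\bx,\cP_\lambda) > t] \leq \exp\bigl(-\lambda\,|\cCtp(\bx)\cap B_{t\lambda^{-1/2}}(\bx)\cap[0,1]^2|\bigr)\,\ind\{t < \lambda^{1/2}\nu_{\theta,\phi}(\bx)\}.
\]
The whole task is to lower-bound this area by $\kappa \min(t^2, t\,\lambda^{1/2}\alpha_\lambda)$ or something similar.

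\emph{Unaligned case.} The key geometric claim is that there is $\kappa>0$ such that, for every $\bx\in[0,1]^2$ and every $r \leq \nu_{\theta,\phi}(\bx)$,
\[
|\cCtp(\bx)\cap B_r(\bx)\cap[0,1]^2| \geq \kappa r^2 .
\]
The reason is that no boundary ray of the cone is parallel to a side of the square. So the part of $\cCtp(\bx)\cap[0,1]^2$ within distance $r$ contains a triangle with angles bounded away from $0$, whose diameter is comparable to $r$. One can also see this from the corner-by-corner analysis behind Lemma~\ref{unaligned'lemma}: the cone meets each side of the square at an angle bounded below. Given the claim, the tail bound is $e^{-\kappa t^2}$, and all moments are uniformly bounded.

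\emph{Singly-aligned case ($\theta=\pi/2$).} Here we take $\bx=(x_1,x_2)$ with $x_2 \geq \alpha_\lambda$. The cone has one ray pointing horizontally left and then opens downward, so the danger is $\bx$ near the left side or near the bottom. Near the left wall the cone region is bounded by a segment of length $\leq \nu$, and the area is again at least $\kappa r^2$ by the same triangle argument as in the unaligned case. Near the bottom, the region within distance $r$ contains the part of the cone above height $0$. For $r \leq x_2$ (up to a constant), this gives area $\geq \kappa r^2$. For larger $r$, the region contains a strip of height $\asymp x_2 \geq \alpha_\lambda$ and length $\asymp r$ (minus wall effects), so the area is $\geq \kappa r\alpha_\lambda$. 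Therefore
\[
\Pr[\cDtpl > t] \leq e^{-\kappa t^2} + e^{-\kappa t \lambda^{1/2}\alpha_\lambda}\ind\{t \geq \lambda^{1/2}\alpha_\lambda\} .
\]
The second term is controlled using $\lambda^{1/2}\alpha_\lambda \geq \sqrt{c\log\lambda}$. For $t \geq \lambda^{1/2}\alpha_\lambda$, the exponent is at least $\kappa t\sqrt{c\log\lambda}$. Since $t \leq \lambda^{1/2}\sqrt2$, we have $t^{p}\leq C\lambda^{p/2}$. The contribution to $\Exp[(\cDtpl)^p]$ is then at most
\[
C\lambda^{p/2}\exp(-\kappa c\log\lambda) \leq C\lambda^{p/2-\kappa c},
\]
which is bounded once $c > c_{\phi,p} := p/(2\kappa)$. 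This is where the $p$-dependence of the threshold enters.

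\emph{Part (b).} Inserting $\by$ can only decrease $\cDtp(\bx,\cdot)$ when $\cDtp(\bx,\cP_\lambda)>0$. The problem is when $\cP_\lambda$ has no cone point: then $\cDtp(\bx,\cP_\lambda)=0$ but $\cDtp(\bx,\cP_\lambda\cup\{\by\})=\|\bx-\by\|$. Since $\|\bx-\by\|\leq\sqrt2$, the $p$th moment of the scaled quantity is at most
\[
\Exp[(\cDtpl(\bx,\cP_\lambda))^p] + (2\lambda)^{p/2}\,\Pr[\text{no point of } \cP_\lambda \text{ in } \cCtp(\bx)\cap[0,1]^2 \text{ within distance } \|\bx-\by\|] .
\]
More precisely, $\cDtpl(\bx,\cP_\lambda\cup\{\by\}) > t$ implies that the region of radius $t\lambda^{-1/2}$ is empty of $\cP_\lambda$ and $t \leq \lambda^{1/2}\nu(\bx)$. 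So exactly the same tail bound holds, and (b) follows identically.

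The main obstacle is making the geometric area lower bound rigorous and uniform, especially near corners in the singly-aligned obtuse and acute cases.
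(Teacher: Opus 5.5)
Your outline is the paper's route: bound the tail of $\cDtpl(\bx,\cP_\lambda)$ by the Poisson void probability of $\cCtp(\bx)\cap B_{t\lambda^{-1/2}}(\bx)\cap[0,1]^2$, note that the tail vanishes for $t\ge\lambda^{1/2}\nu_{\theta,\phi}(\bx)$, lower-bound the area, and integrate. In the singly-aligned case you then use $\lambda\alpha_\lambda^2\ge c\log\lambda$ to beat the factor $\lambda^{p/2}$. The paper phrases the tails through $R_\lambda(\bx)$, which is only cosmetic.

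Your refined argument for (b) is correct, and more direct than the paper's use of Lemma~\ref{lem:R-monotonicity}. A large value of $\cDtpl(\bx,\cP_\lambda\cup\{\by\})$ forces the same void and, since $\by\in[0,1]^2$, the same restriction $t<\lambda^{1/2}\nu_{\theta,\phi}(\bx)$. Your first display there should be dropped: it is $(2\lambda)^{p/2}$ times a probability that tends to $1$ as $\by\to\bx$, so it is not bounded.

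The genuine gap is the uniform area lower bound. You assert it rather than prove it, and it is where essentially all of the paper's work for this proposition lies.

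\emph{Unaligned case.} You argue that the cone meets each side at an angle bounded below, so the region contains a triangle with angles bounded away from $0$ and diameter comparable to $r$. That inference does not follow, because bounded angles do not make a convex polygon fat. For example, take the obtuse singly-aligned cone at $\bx=(1,\alpha_\lambda)$. Then $\cCtp(\bx)\cap[0,1]^2=[0,1]\times[0,\alpha_\lambda]$, all of whose angles are $\pi/2$, yet its area is $\alpha_\lambda\ll\nu_{\theta,\phi}(\bx)^2$. What must be used is that no cone ray is parallel to a side. This has to hold uniformly in $\bx$, including $\bx$ near sides and corners, where $\nu_{\theta,\phi}(\bx)\to0$ and the shape of the region changes.

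The paper does this in Lemma~\ref{lem:nadim}. Triangulating from $\bx$ reduces the claim to $|\cCtp(\bx)\cap[0,1]^2|\ge c\,\nu_{\theta,\phi}(\bx)^2$, which is proved by contradiction:
\begin{itemize}
\item a bad sequence must have $\nu_{\theta,\phi}(\bx_n)\to0$;
\item rescaling by $\nu_{\theta,\phi}(\bx_n)$, the regions converge in Hausdorff distance to $\cCtp(\0)\cap K_\infty$, with $K_\infty$ a plane, half-plane or quadrant;
\item this limit contains a point at unit distance, so it has zero area only if a cone ray is aligned with a side of $K_\infty$, which the unaligned hypothesis excludes.
\end{itemize}

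\emph{Singly-aligned case.} Your bound $\kappa\min\{r^2,r\alpha_\lambda\}$ for $r\le\nu_{\theta,\phi}(\bx)$ is in fact true. However, the ``strip of length $\asymp r$ (minus wall effects)'' is not literally right: for the obtuse cone with $x_1$ small, the strip to the left has length only $x_1$. What rescues it is the constraint $r\le\nu_{\theta,\phi}(\bx)$, via a case analysis you have not carried out.

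The paper sidesteps this by settling for a sector bound, $|A_{\theta,\phi}(\bx,s)|\ge b_\phi(s\wedge\alpha_\lambda)^2$:
\begin{itemize}
\item in the obtuse case, the south-east or south-west quarter of $B_{s\wedge\alpha_\lambda}(\bx)$ lies in the square and meets the cone in a sector of angle at least $\phi-\frac{\pi}{2}$;
\item in the acute case, one uses a sector of radius $s\wedge\ell(\bx)$, together with $\nu_{\theta,\phi}(\bx)\le\ell(\bx)\sec\phi$ whenever $\ell(\bx)<\alpha_\lambda$.
\end{itemize}
This bound is immediate and gives only the tail $e^{-\lambda b'_\phi\alpha_\lambda^2}\le\lambda^{-cb'_\phi}$ beyond $t=\lambda^{1/2}\alpha_\lambda$. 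That is exactly why $c>p/(2b'_\phi)$ is needed. So you locate the main difficulty in the singly-aligned corners, but with the paper's choice of bound those are routine; the delicate uniformity is in the unaligned case.

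Conversely, if you did prove your stronger bound, the tail $e^{-\kappa t\lambda^{1/2}\alpha_\lambda}$ would be integrable against $pt^{p-1}$ uniformly once $\lambda^{1/2}\alpha_\lambda\ge1$. Then no largeness condition on $c$ would be needed at all; your crude step $t^p\le C\lambda^{p/2}$ throws that gain away.
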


To 
prove Proposition~\ref{prop:moments-bounds}
we will show that $\cDtpl(\bx,\cP_{\lambda}) \leq R_{\lambda}(\bx)$ and $\cDtpl(\bx,\cP_{\lambda} \cup \{\by\}) \leq R_{\lambda}(\bx)$, and obtain both parts of the proposition from the following 
tail bounds on $R_{\lambda}(\bx)$.

 \begin{lemma}[Stabilization tail bounds]
 \label{lem:tau-lambda} 
Recall the definitions of $\Omega_\lambda$ and $\alpha_\lambda$ from around~\eqref{eq:Omega-lambda}.
Define also  $\tau_{\lambda}(r):= \sup_{\bx \in \Omega_{\lambda}}\Pr(R_{\lambda}(\bx) > r)$ for $\lambda >0$, $r \in \RP$.
  \begin{thmenumi}[label=(\roman*)]
        \item\label{lem:tau-lambda-singly-aligned}  
If $(\theta,\phi)$ is singly aligned, then there is a constant $b'_\phi \in (0,\infty)$ so that, for all $\lambda >0$, 
\begin{equation}
\label{eq:tau-lambda-singly-aligned}
   \tau_{\lambda}(r+1 )
\leq 
\begin{cases}
  \exp\{-b'_\phi r^{2}\}, & \text{if}\ 0 \leq r\leq \alpha_{\lambda}\sqrt{\lambda} ,\\
  \exp\{-\lambda b'_\phi \alpha_{\lambda}^{2}\}, & \text{if}\ \alpha_{\lambda}\sqrt{\lambda} \leq r \leq \sqrt{2\lambda}   ,\\
  0, & \text{if}\  r >   \sqrt{2\lambda}  .
\end{cases}
\end{equation}
\item 
\label{lem:tau-lambda-unaligned} 
If $(\theta,\phi)$ is unaligned, then there is a constant $b'_{\theta,\phi}   \in (0,\infty)$ so that, for all $\lambda >0$, 
    \begin{equation}
    \label{eq:tau-lambda-unaligned}
  \tau_{\lambda}(r +1 )
\leq 
\begin{cases}
  \exp\{-b'_{\theta,\phi}  r^{2}\}, & \text{if}\  0 \leq r \leq \sqrt{2\lambda}   ,  \\
  0, & \text{if}\  r >   \sqrt{2\lambda}  . 
\end{cases}
\end{equation}
\end{thmenumi}
 \end{lemma}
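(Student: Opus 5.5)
Write $\bu := \lambda^{1/2}\bx$, $\cX := \lambda^{1/2}\cP_\lambda$, which is a unit-intensity Poisson process on $W_\lambda := [0,\lambda^{1/2}]^2$. By Lemma~\ref{lem:R-to-D-relations} and the definition, $R_\lambda(\bx) \le \lceil \lambda^{1/2}\nu_{\theta,\phi}(\bx)\rceil \le \lceil \sqrt{2\lambda}\,\rceil$ always. This already gives the zero branch for $r > \sqrt{2\lambda}$, since then $r+1 > \lceil\sqrt{2\lambda}\rceil$. For $r \le \sqrt{2\lambda}$, the event $\{R_\lambda(\bx) > r+1\}$ forces $\cX \cap \cCotp(\bu) \cap B_{r}(\bu) = \emptyset$. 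Indeed, if there were a point of $\cX$ in that set, then $\cDtp(\bu, \cX) \le r$ and $R_\lambda(\bx) = \lceil \cDtp\rceil \le r+1$ (the ceiling accounts for the $+1$). Hence
\[
\Pr(R_\lambda(\bx) > r+1) \le \exp\{-|\cCtp(\bu)\cap B_r(\bu)\cap W_\lambda|\}.
\]
The whole task therefore reduces to a deterministic lower bound on the area $A(\bu,r) := |\cCtp(\bu)\cap B_r(\bu) \cap W_\lambda|$, uniform over $\bx \in \Omega_\lambda$.

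\emph{Unaligned case.} I will show that there is $b>0$, depending on $\theta,\phi$, such that $A(\bu,r) \ge b r^2$ for every $\bu \in W_\lambda$ and every $r \le \sqrt{2\lambda}$. By scaling, this is the claim that for every point $\bu$ in a square of side $L$ and every $r \le \sqrt2 L$, the cone-sector of radius $r$ meets the square in area at least $b r^2$. The key geometric fact is that the cone is not aligned: no edge of the cone is parallel to a side. So, whatever corner region $\bu$ lies in, some subcone $\cC_{\theta',\phi'}(\bu) \subseteq \cCtp(\bu)$ of fixed positive angle $\phi'$ points into the square. For $0<\theta<\pi/2$, the cone always contains directions strictly inside the open quadrant between the up and left directions, or the appropriate quadrant in cases 1 and 2. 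Within that subcone, the sector stays in the square until distance comparable to the distance to the opposite sides, which is at least proportional to $r$ once $r \le \sqrt2 L$ is compared with the diagonal.

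To make this robust I would argue by compactness. The function $(\bu/L, r/L) \mapsto A/r^2$ is continuous and positive on the compact set $[0,1]^2 \times (0,\sqrt2]$. Near $r\to 0$ it tends to a positive limit because the cone at a corner still has interior overlap with the square: in the unaligned case, at each corner the cone and the corner quadrant share an open angle. This is exactly where unalignment enters, and it is the main point to verify carefully, corner by corner, using the three cases of Proposition~\ref{prop:cone-companions}. Setting $b'_{\theta,\phi} = b$ gives~\eqref{eq:tau-lambda-unaligned}.

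\emph{Singly-aligned case.} Take $\theta=\pi/2$, so one boundary ray points horizontally left, along the bottom side. Points $\bx \in \Omega_\lambda$ have height $x_2 \ge \alpha_\lambda$, so $u_2 \ge \alpha_\lambda\sqrt\lambda$. For $r \le \alpha_\lambda\sqrt\lambda$, the sector does not reach the bottom side. The other sides are handled as in the unaligned argument, since the cone's remaining ray is unaligned and the left and top sides cause no degeneracy, giving $A \ge b r^2$. For $\alpha_\lambda\sqrt\lambda \le r \le \sqrt{2\lambda}$, monotonicity in $r$ gives $A(\bu,r) \ge A(\bu,\alpha_\lambda\sqrt\lambda) \ge b\lambda\alpha_\lambda^2$, which is the middle branch.

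The degenerate geometry is only near the bottom-left corner for points with $u_2$ small, where the aligned ray runs along the side and the overlap angle shrinks. This is precisely why one needs $u_2 \ge \alpha_\lambda\sqrt\lambda$ and cannot get a bound better than $e^{-b\lambda\alpha_\lambda^2}$. I expect the uniform area lower bound near the corners to be the main obstacle, and I would handle it by the explicit subcone choice, falling back on the compactness argument with the aligned edge excluded.
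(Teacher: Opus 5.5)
Your reduction matches the paper's. The cap $R_\lambda(\bx)\le\lceil\sqrt{2\lambda}\,\rceil$ gives the zero branch, and $\Pr(R_\lambda(\bx)>r+1)\le\exp\{-\lambda|\cCtp(\bx)\cap B_{\lambda^{-1/2}r}(\bx)\cap[0,1]^2|\}$ reduces everything to an area bound.

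The gap is the area bound you then claim. In the unaligned case, $A(\bu,r)\ge br^2$ for all $\bu\in W_\lambda$ and $r\le\sqrt{2\lambda}$ is false. If $0<\theta<\theta+\phi<\pi/2$, every cone direction points strictly up and to the left. So for $\bx$ on the top or left side, $\cCtp(\bx)\cap[0,1]^2=\{\bx\}$ and $A(\bu,r)=0$ for every $r$. For $\bx$ at distance $\eps$ below the top side, $\cCtp(\bx)\cap[0,1]^2$ is a triangle of area $O(\eps^2)$, so $A(\bu,r)=O(\lambda\eps^2)$ uniformly in $r$. Every unaligned cone has such points: the left side in case 1, the bottom-left corner in case 2.

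Consequently several of your statements fail:
\begin{itemize}
\item It is not true that at each corner the cone and the corner quadrant share an open angle.
\item $A/r^2$ is not bounded below on your parameter set, and that set is not compact anyway.
\item The sector need not stay in the square up to a distance proportional to $r$. The relevant distance is to the sides the cone points \emph{towards}, and this can be arbitrarily small.
\end{itemize}

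The acute singly-aligned case has the same defect near the left side. For $\bx=(\eps,x_2)$ with $x_2\ge\alpha_\lambda$, $\cCtp(\bx)\cap[0,1]^2$ is a triangle of area $\tfrac12\eps^2\tan\phi$, so $A(\bu,\alpha_\lambda\sqrt\lambda)\ge b\lambda\alpha_\lambda^2$ fails once $\eps\ll\alpha_\lambda$. In the obtuse case your bound does hold, but the reason is elementary, not the unaligned argument. A quadrant sector of radius $s\wedge\alpha_\lambda$ lies in both the cone and the square: the south-east quadrant if $x_1\le 1-\alpha_\lambda$, the south-west one otherwise.

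The missing idea is to use the pointwise cap $R_\lambda(\bx)\le\lceil\lambda^{1/2}\nu_{\theta,\phi}(\bx)\rceil$, not just its consequence $\nu_{\theta,\phi}\le\sqrt2$. It gives $\Pr(R_\lambda(\bx)>r+1)=0$ whenever $r\ge\lambda^{1/2}\nu_{\theta,\phi}(\bx)$. So the supremum only runs over points with $\lambda^{1/2}\nu_{\theta,\phi}(\bx)>r$, and an area bound of the form $b\,(s\wedge\nu_{\theta,\phi}(\bx))^2$ with $s=\lambda^{-1/2}r$ suffices.

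In the unaligned case this is Lemma~\ref{lem:A-lower-bound}\ref{lem:A-lower-bound-iii}. It rests on Lemma~\ref{lem:nadim}: $\cCtp(\bx)\cap[0,1]^2$ contains a sector of angle $\eps$ and radius $\eps\nu_{\theta,\phi}(\bx)$. Its proof needs compactness normalised by $\nu_{\theta,\phi}(\bx)$ rather than by $r$. To handle $\nu_{\theta,\phi}(\bx)\to0$, one blows up $([0,1]^2-\bx)/\nu_{\theta,\phi}(\bx)$. Unalignment is genuinely used here: a zero-area limit can only arise if a boundary ray is parallel to a side.

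In the acute case the paper uses area $\ge b(s\wedge\ell(\bx))^2$ with $\ell(\bx)=x_1\wedge x_2$. It then observes that $\ell(\bx)<\alpha_\lambda$ forces $\nu_{\theta,\phi}(\bx)\le\ell(\bx)\sec\phi$. Hence on $\{\lambda^{1/2}\nu_{\theta,\phi}(\bx)>r\}$ one has $\ell(\bx)\ge\alpha_\lambda\wedge\lambda^{-1/2}r\cos\phi$, which gives the claimed bound with $b'_\phi=b_\phi\cos^2\phi$.
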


\begin{remark} 
\label{rem:tau-lambda} 
%In the terminology of Penrose (Definition~$2.3$~\cite{mp-gaussian}) $\cDtp$ is homogeneously stabilizing (\red{why is that?}). 
As mentioned above, we will show (see~\eqref{eq:D-by-R-bound} below) that $\cDtpl (\bx, \cP_{\lambda}) \leq R_{\lambda}(\bx)$. On the other hand, if $\cDtpl (\bx, \cP_{\lambda}) > 0$ then $R_{\lambda}(\bx)    \leq \cDtpl(\bx,\cP_{\lambda}) +1$ (see Lemma~\ref{lem:R-to-D-relations}), so the tails of $\cDtpl (\bx, \cP_{\lambda})$ and $R_{\lambda}(\bx)$ are essentially equivalent.
 \end{remark}

To prove Lemma~\ref{lem:tau-lambda}, we identify for each $s>0$ a region $A_{\theta, \phi}(\bx, s)$ of cone $\cCtp(\bx)$ such that $R_{\lambda}(\bx)$ is only bigger than $s$ if the region $A_{\theta, \phi}(\bx, s)$ contains none of $\cP_{\lambda}$. Then 
we bound
$\Pr(R_{\lambda}(\bx) \geq r)$ via lower bounds on the area $|A_{\theta, \phi}(\bx, s)|$. To define the appropriate region, recall from~\eqref{eq:ball-def} that $B_r(\bx)$ denotes the closed Euclidean ball centered at $\bx$ with radius~$r$.
 
 \begin{definition}
 \label{def:A-region}
For $\bx \in [0,1]^2$ and $s >0$, define
$     A_{\theta, \phi}(\bx, s) := \cCtp(\bx) \cap B_s(\bx) \cap [0,1]^2 
$.
\end{definition}

 Recall the definition of 
  $\nu_{\theta,\phi}(\bx)$
 from~\eqref{eq:nu-def}.
 For $\bx = (x_{1}, x_{2})$, let $\ell(\bx) := x_{1} \wedge x_{2}$.

\begin{lemma} 
\label{lem:A-lower-bound}
The following bounds on the area of $A_{\theta, \phi}(\bx, s)$ from Definition~\ref{def:A-region} hold for all $s>0$.
  \begin{thmenumi}[label=(\roman*)]
        \item\label{lem:A-lower-bound-i}
If $(\theta,\phi)$ is singly aligned and $\phi > \frac{\pi}{2}$, then there exists $b_\phi >0$ such that 
\[
\inf_{\bx \in \Omega_\lambda}| A_{\theta, \phi}(\bx, s)| \geq b_\phi (s \wedge \alpha_\lambda)^{2}.
\]
   \item\label{lem:A-lower-bound-ii}
If $(\theta,\phi)$ is singly aligned and $\phi < \frac{\pi}{2}$, then there exists $b_\phi >0$ such that 
    \begin{align}
        | A_{\theta, \phi}(\bx, s)| \geq b_\phi (s \wedge \ell(\bx))^{2},  \text{ for all }  \bx \in \Omega_{\lambda}. \nonumber
    \end{align}
      \item\label{lem:A-lower-bound-iii}
If $(\theta,\phi)$ is unaligned, then there exists $b_{\theta,\phi} >0$ such that 
    \begin{align}
        | A_{\theta, \phi}(\bx, s)| \geq b_{\theta,\phi} (s \wedge \nu_{\theta,\phi}(\bx))^{2},  \text{ for all }  \bx \in [0,1]^2 \nonumber
    \end{align}
\end{thmenumi}
\end{lemma}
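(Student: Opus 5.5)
The common strategy for all three parts is to exhibit, inside $A_{\theta,\phi}(\bx,s)$, an explicit elementary region (a circular sector or a triangle) whose area is bounded below by a constant times the square of its size. This uses only the convexity of $[0,1]^2$ and the geometry of the cone. Throughout, write $t := s \wedge (\text{relevant length})$. Since $A_{\theta,\phi}(\bx,s)$ is increasing in $s$, it suffices to show that $A_{\theta,\phi}(\bx,t)$ contains a region of area at least $b t^2$.

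\textbf{Part (i), obtuse singly-aligned.} Here $\theta=\pi/2$ and $\phi>\pi/2$. The cone $\cCtp(\bx)$ is bounded by the leftward horizontal ray and a ray at angle $\pi/2+\phi\in(\pi,3\pi/2)$, i.e.\ pointing down-right. It therefore contains the closed downward-left quadrant together with a further wedge of opening angle $\phi-\pi/2$ below-right of the downward vertical.

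For $\bx\in\Omega_\lambda$ we have $x_2\ge\alpha_\lambda$. One of the two sides, left or right, has horizontal room at least $1/2\ge\alpha_\lambda$ (for large $\lambda$). Taking $t=s\wedge\alpha_\lambda$, I will exhibit a sub-wedge of opening angle $\phi-\pi/2$ and radius $t$:

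\begin{itemize}
\item If $x_1\ge 1/2$, use the sub-wedge of the quadrant between the downward vertical and an angle $\min(\phi-\pi/2,\pi/2)$ towards the left.
\item If $x_1< 1/2$, use the wedge $[\pi,\pi/2+\phi]$ on the right of the downward vertical.
\end{itemize}

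In either case the sector of radius $t$ stays in $[0,1]^2$, since its vertical extent is at most $t\le x_2$ and its horizontal extent is at most $t\le 1/2$ on the side with room. Its area is $\tfrac12(\phi-\pi/2)t^2$, so $b_\phi=\tfrac12\min(\phi-\pi/2,\pi/2)$ works.

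\textbf{Part (ii), acute singly-aligned.} The cone lies in the closed left half-plane and below the horizontal. It is the wedge between the leftward horizontal and the ray at angle $\pi/2+\phi$. The sector of radius $t=s\wedge\ell(\bx)$ inside this wedge has horizontal extent at most $t\le x_1$ and vertical extent at most $t\le x_2$, so it lies in $[0,1]^2$. Its area is $\tfrac12\phi t^2$, which gives $b_\phi=\phi/2$.

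\textbf{Part (iii), unaligned.} This is the main obstacle: the distance to the square's boundary in the cone directions may be much less than $\nu_{\theta,\phi}(\bx)$, so a sector at $\bx$ need not fit. I plan to use convexity instead.

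Let $\by\in\cCtp(\bx)\cap[0,1]^2$ attain $\nu:=\nu_{\theta,\phi}(\bx)$ (or nearly attain it), and set $t=s\wedge\nu$. Because the cone directions avoid the axis directions, the set $K:=\cCtp(\bx)\cap[0,1]^2$ is a convex polygon with vertex $\bx$ and interior angle $\phi$ at $\bx$ (clipped if $\bx$ is on the boundary). I claim $K$ contains a triangle with apex $\bx$, one side $[\bx,\by]$, and a second side of length comparable to $\nu$ in a direction making angle bounded below from $[\bx,\by]$.

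The idea is a finite case analysis on which sides of the square the two boundary rays of the cone exit through, in the three unaligned subcases (c)--(e) of Proposition~\ref{prop:cone-companions}. Unalignment ensures that each exit edge meets the rays at angles bounded away from $0$, by a constant depending on $\theta,\phi$. From this one gets a constant $\kappa_{\theta,\phi}>0$ such that $K$ contains the triangle $\mathrm{conv}\{\bx,\by,\bz\}$ with $\|\bz-\bx\|\ge\kappa\nu$ and angle at $\bx$ at least $\kappa$.

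Scaling this triangle by $t/\nu$ about $\bx$ keeps it inside $K\cap B_s(\bx)$, by convexity and because all of its points lie within distance $t$ of $\bx$. Its area is at least $\tfrac12\kappa^2\sin\kappa\, t^2$. Checking the exit-edge geometry uniformly, especially near the corners of the square, is where I expect the real work to lie.
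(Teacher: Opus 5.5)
Your parts (i) and (ii) are correct and match the paper's proof up to cosmetic choices. The paper splits at $x_1 \le 1-\alpha_\lambda$ and uses the south-east (resp.\ south-west) quarter of $B_{s\wedge\alpha_\lambda}(\bx)$. In (ii) it uses the sector $\cCtp(\bx)\cap B_{s\wedge \ell(\bx)}(\bx)$, exactly as you do. Your reduction in (iii) is also sound and parallels the paper: the paper shrinks a sector of angle $\eps$ and radius $\eps\,\nu_{\theta,\phi}(\bx)$ supplied by Lemma~\ref{lem:nadim}, and you shrink a triangle.

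\textbf{The gap.} The existence of your fat triangle, with $\kappa_{\theta,\phi}$ uniform over all $\bx\in[0,1]^2$, is the whole content of (iii). You assert it rather than prove it, and the mechanism you cite does not yield it. That the boundary rays cross the exit sides at angles bounded away from $0$ says nothing about the relative sizes of the pieces of $K$. For example, take $0<\theta<\frac{\pi}{2}<\theta+\phi<\pi$ and $\bx=(\frac12,\eta)$ with $\eta\downarrow 0$. Both crossing angles are fixed, but the lower ray leaves the square after distance $\eta/|\cos(\theta+\phi)|$. Meanwhile $\nu_{\theta,\phi}(\bx)\ge\frac12$, since $(0,\eta)\in K$. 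So the triangle on $\bx$ and the two exit points is a sliver. A good third vertex must be found elsewhere (here $(0,\eta)$, in the leftward direction, works). Near the corners the right choice depends on the ratios of $x_1,x_2,1-x_1,1-x_2$ to $\nu$. Without a complete treatment of these configurations, $\kappa$ has not been shown to exist.

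\textbf{How to close it.} Your claim is equivalent, up to constants, to the uniform area bound $|K|\ge c_{\theta,\phi}\,\nu_{\theta,\phi}(\bx)^2$. Indeed $K\subseteq B_{\nu}(\bx)$, so if every $\bz\in K$ with $\|\bz-\bx\|\ge\kappa\nu$ made angle less than $\kappa$ with $\by-\bx$, then $|K|\le(\pi\kappa^2+\kappa)\nu^2$. The paper proves this area bound, inside the proof of Lemma~\ref{lem:nadim}, by contradiction and compactness rather than by cases:
\begin{itemize}
\item Take $\bx_n$ with $|K(\bx_n)|/\nu(\bx_n)^2\to0$. If $\nu(\bx_n)$ stays bounded below, a limit point gives a contradiction.
\item Otherwise $\nu(\bx_n)\to0$. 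Rescale $K(\bx_n)-\bx_n$ by $1/\nu(\bx_n)$. The square then converges locally to the plane, an axis-parallel half-plane, or an axis-parallel quadrant, because opposite scaled distances sum to $1/\nu(\bx_n)\to\infty$.
\item The limit $\cCtp(\0)\cap K_\infty$ contains a unit vector. A set of this form has zero area only if a boundary ray of the cone lies along an axis, which unalignment forbids.
\end{itemize}
You need this argument, or an equally complete case analysis, for (iii). Your convexity-and-scaling step then finishes the proof.
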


To prove Lemma~\ref{lem:A-lower-bound}\ref{lem:A-lower-bound-iii}, we will need the following claim, whose proof we defer until later.% (\red{appendix}?).

\begin{lemma}
\label{lem:nadim}
    For $(\theta,\phi)$ unaligned, there exists $\eps := \eps(\theta,\phi) >0$ such that, for all $\bx \in [0,1]^2$, $ \cCtp(\bx) \cap [0,1]^2 $ contains a sector with apex $\bx$, angle $\eps$ and radius $\eps \nu_{\theta,\phi}(\bx)$.
\end{lemma}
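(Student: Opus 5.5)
Throughout, write $\theta_1:=\theta$ and $\theta_2:=\theta+\phi$ for the two boundary directions, and let $C:=\cCtp(\bx)\cap[0,1]^2$. Then $C$ is a compact convex set: it is the intersection of a convex cone (because $\phi<\pi$) with the square. It contains $\bx$, and its diameter from $\bx$ is $\nu:=\nu_{\theta,\phi}(\bx)$. If $\nu=0$ there is nothing to prove, so assume $\nu>0$ and fix a point $\by\in C$ with $\|\by-\bx\|=\nu$. The idea is to use convexity: $C$ contains the triangle with vertices $\bx$, $\by$ and a second point $\bz\in C$ chosen so that the angle $\angle \by\bx\bz$ is at least some $\eps_0(\theta,\phi)>0$ and $\|\bz-\bx\|\geq \eps_0\nu$. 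Such a triangle contains a sector with apex $\bx$, angle $\eps_0$ and radius $c\,\eps_0\nu$, where $c=c(\eps_0)>0$. This is elementary: the sector spanned by the two rays $\bx\by$ and $\bx\bz$ at radius $\min(\|\by-\bx\|,\|\bz-\bx\|)\sin(\cdot)$ lies inside the triangle, up to a constant depending only on the angle. Taking $\eps$ to be the minimum of these constants gives the claim.

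The key step is finding $\bz$. Unalignedness means that neither boundary ray of the cone is parallel to a side of the square. I would split according to which sides of the square the two boundary rays $\bx+t(\sin\theta_i,\cos\theta_i)$ (with the paper's angle convention) exit through. Fix $i\in\{1,2\}$. Because direction $\theta_i$ is not axis-parallel, the ray makes an angle of at least $\beta:=\min_i \operatorname{dist}(\theta_i,\frac{\pi}{2}\Z)>0$ with both sides it could hit. Let $t_i$ be the exit length of boundary ray $i$. The region $C$ is bounded by these two ray segments together with a polygonal arc along $\partial[0,1]^2$. The farthest point $\by$ lies on the boundary arc or at an end of a ray segment. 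Since the angles between the rays and the sides are bounded below by $\beta$ and the sides bend by right angles, a law-of-sines argument should give $t_i\geq \kappa\nu$ for at least one $i$, where $\kappa=\kappa(\beta,\phi)>0$. Heuristically, if both boundary rays were very short compared to $\nu$, then the far point $\by$ would have to lie on a long piece of the boundary arc. Each such piece is a segment of a side seen from $\bx$ between two rays that meet the sides transversally, and its length is comparable to the nearby ray lengths unless a ray is nearly parallel to that side. This is excluded by $\beta>0$; the corner case is handled by using two sides.

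Given $t_i\geq\kappa\nu$ for some $i$, take $\bz:=\bx+t_i(\sin\theta_i,\cos\theta_i)$. If the angle between $\by-\bx$ and $\bz-\bx$ is at least $\phi/2$, use the triangle $\bx\by\bz$. Otherwise $\by$ lies within angle $\phi/2$ of ray $i$, so ray $i$ itself, together with the edge of $C$ near it, gives a sector of angle $\phi/2$ about ray $i$ inside the cone. Its radius is the distance from $\bx$ to the sides hit in that angular range, and this radius is again at least $\kappa'\nu$ by the same transversality bound, using the angular distance of at most $\phi/2$ and $\beta$.

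The main obstacle is making the inequality $t_i\geq\kappa\nu$ rigorous uniformly over all $\bx$, including $\bx$ near corners and on sides, where some exit lengths vanish. I would first try a direct case analysis over the finitely many combinatorial configurations of which sides the two rays exit through, using the three unaligned geometries of Proposition~\ref{prop:cone-companions}\ref{prop:cone-companions-c}--\ref{prop:cone-companions-e}. If that becomes messy, I would fall back on a compactness argument. The function $\bx\mapsto$ (largest sector fraction) is positive on $[0,1]^2$, and by scale invariance near each corner and side (the local picture is a cone intersected with a quadrant or half-plane, where unalignedness guarantees a nondegenerate sector), a positive lower bound can be extracted by lower semicontinuity.
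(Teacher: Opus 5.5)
Your main route depends on the claim that some boundary ray has exit length $t_i\geq\kappa\nu$. That claim is false, so the argument built on it fails.

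Take any cone from the paper's unaligned case~2, i.e.\ $0<\theta<\frac{\pi}{2}$ and $\pi<\theta+\phi<\frac{3\pi}{2}$. In the paper's convention, angle $\alpha$ is the direction $(-\sin\alpha,\cos\alpha)$. So the first boundary ray points up-left, the second points down-right, and the cone contains every direction from left to down. Put $\bx=(1-\delta,1-\delta)$. The first ray leaves the square through the top side after length $\delta/\cos\theta$, and the second through the right side after length $\delta/|\sin(\theta+\phi)|$. But $\0\in\cCtp(\bx)$, so $\nu_{\theta,\phi}(\bx)\geq\sqrt2(1-\delta)$, and hence $\max(t_1,t_2)/\nu_{\theta,\phi}(\bx)\to0$ as $\delta\to0$.

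In this example $\cCtp(\bx)\cap[0,1]^2$ is almost the whole square, and its boundary arc wraps around the three corners $(0,1)$, $(0,0)$, $(1,0)$. The pieces of the left and bottom sides in it are cut off by corners of the square, not by boundary rays of the cone, so no transversality bound relates their length to $t_1,t_2$. The sector you need lies around the diagonal direction, far from both boundary rays. Hence no construction that anchors the triangle or sector on a long boundary ray can work; at the least, $\bz$ would have to be allowed to be a corner of the square lying inside the cone.

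Your ``otherwise'' step is also wrong as stated. Take $\theta=\frac{\pi}{4}$, $\theta+\phi\in(\frac{3\pi}{4},\pi)$ and $\bx=(\frac12,\delta)$. The farthest point is the exit point of the first ray, so $\by=\bz$ and $\nu_{\theta,\phi}(\bx)=\frac{\sqrt2}{2}$. Yet the sector of angle $\phi/2$ adjacent to that ray contains downward directions that leave through the bottom side within distance $O(\delta)$. This step alone could be repaired by using an angle $\eps\leq\sin\beta$ instead of $\phi/2$. For directions within $\eps$ of a boundary ray, the inner products with all four outward side normals keep the ray's signs, and the exit distance is at least half the ray's.

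Your compactness fallback is the right idea and is essentially the paper's proof, but as written it is only a gesture, and it would now have to carry the whole argument. The paper works with the functional $|A(\bx)|/\nu_{\theta,\phi}(\bx)^2$, where $A(\bx)=\cCtp(\bx)\cap[0,1]^2$.

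It turns an area lower bound into a sector by triangulating the polygon $A(\bx)$ (at most six vertices) from $\bx$. This gives a triangle of area at least $|A(\bx)|/4$ with sides at most $\nu_{\theta,\phi}(\bx)$. Such a triangle must have both sides at $\bx$ of order $\nu_{\theta,\phi}(\bx)$ and its angle at $\bx$ bounded below.

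The area bound is then proved by contradiction along a sequence $\bx_n$:
\begin{itemize}
\item If $\nu_{\theta,\phi}(\bx_n)$ stays bounded below, continuity at a limit point gives a contradiction, because $A(\bx)$ has positive area whenever $\nu_{\theta,\phi}(\bx)>0$.
\item If $\nu_{\theta,\phi}(\bx_n)\to0$, the rescaled sets $(A(\bx_n)-\bx_n)/\nu_{\theta,\phi}(\bx_n)$ lie in the unit ball and contain a point of norm $1$. They converge in Hausdorff distance to $\cCtp(\0)\cap K_\infty$, with $K_\infty$ a plane, half-plane or quadrant. This limit has positive area precisely because no boundary ray is axis-parallel.
\end{itemize}

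To turn your sketch into a proof you need exactly these ingredients:
\begin{itemize}
\item a functional that stays meaningful as $\nu_{\theta,\phi}\to0$;
\item the blow-up and its Hausdorff limit;
\item nondegeneracy of the limit, derived from unalignedness;
\item an actual argument, not an asserted lower semicontinuity, that transfers the bound from the limit back to the approximating sets.
\end{itemize}
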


\begin{proof}[Proof of Lemma~\ref{lem:A-lower-bound}]
To prove part~\ref{lem:A-lower-bound-i}, we show $A_{\theta, \phi}(\bx, s)$ always contains a sector with vertex $\bx$, angle $\phi - \frac{\pi}{2}$, and radius $s \wedge \alpha_{\lambda}$. First suppose that $\bx = (x_1, x_2) \in \Omega_\lambda$ is such that $0 \leq x_1 \leq 1 -\alpha_\lambda$. Then the south-east quadrant $B_{\mathrm{SE}}$ of the ball $B_{s \wedge \alpha_\lambda} (\bx)$ is contained in  $[0,1]^2$, implying $A_{\theta, \phi}(\bx, s) \supseteq A_{\theta, \phi}(\bx,s\wedge \alpha_{\lambda}) \cap B_{\mathrm{SE}} = \cCtp(\bx) \cap B_{\mathrm{SE}}$. On the other hand, if $1 -\alpha_\lambda \leq x_1 \leq 1$, a similar argument works but for the south-west quadrant of the ball (recall that $\alpha_\lambda < 1/2$).

For part~\ref{lem:A-lower-bound-ii}, observe that if $s \leq \ell(\bx)$, $A_{\theta, \phi}(\bx, s)$ equals the sector $\cCtp(\bx) \cap B_{s \wedge \ell (\bx)}(\bx)$ which has area $b_\phi (s\wedge \ell(\bx))^{2}$.

Finally we prove part~\ref{lem:A-lower-bound-iii}. Write $\nu := \nu_{\theta,\phi} (\bx)$ for the duration of this proof. 
 By Lemma~\ref{lem:nadim},  $ \cCtp(\bx) \cap [0,1]^2$ contains a sector with angle $\eps$ and radius $\eps \nu $. Then $A_{\theta, \phi}(\bx,s) = \cCtp(\bx) \cap [0,1]^2 \cap B_{s}(\bx)$ contains a sector of angle   $\eps$ and radius $s \wedge \eps \nu \geq \eps(s\wedge \nu)$, which has area $b_{\theta,\phi} (s \wedge \nu(\bx))^{2}$ (recall $\eps$ depended  on $\theta$ and $\phi$).
\end{proof}

\begin{proof}[Proof of Lemma~\ref{lem:tau-lambda}]
First note that, from Lemma~\ref{lem:R-to-D-relations}
and~\eqref{eq:nu-def},  $R_{\lambda}(\bx) \leq \lceil \lambda^{1/2} \nu_{\theta,\phi}(\bx) \rceil$.
Since $\nu_{\theta,\phi}(\bx) \leq \sqrt{2}$, this means that, in  particular,  $\Pr(R_{\lambda}(\bx) > r) = 0$ for $r \geq \lceil \sqrt{2\lambda} \rceil$.

The only candidate nearest neighbours for $\bx \in [0,1]^2$ within distance $\lambda^{-1/2} r$ are the points of
$\cP_{\lambda} \cap A_{\theta, \phi}(\bx, \lambda^{-1/2}r)$. If
there is at least one such point, then $\cDtp(\bx,\cP_{\lambda})$ takes value in $(0, \lambda^{-1/2}r]$, and 
 Lemma~\ref{lem:R-to-D-relations} shows
that $R_\lambda (\bx) = \lceil  \lambda^{1/2} \cDtp(\bx,\cP_{\lambda}) \rceil
\leq \lceil r \rceil < r +1$.
%, i.e., $R_\lambda (\bx) \leq r$ (since  $R_\lambda (\bx)$  is an integer).
Hence, for every $\bx \in [0,1]^2$ and all $r \in \RP$, 
\begin{align} 
    \Pr(R_{\lambda}(\bx) > r+1)
    \leq \Pr(      \cP_{\lambda} \cap A_{\theta, \phi}(\bx, \lambda^{-1/2}r) = \emptyset)
    = \exp\{-\lambda|A_{\theta, \phi}(\bx, \lambda^{-1/2}r)|\} \label{eq:d1}.
\end{align}

To prove part~\ref{lem:tau-lambda-singly-aligned} of the lemma,
suppose that $(\theta,\phi)$ is singly aligned. First consider the obtuse case, $\phi > \frac{\pi}{2}$. Then Lemma~\ref{lem:A-lower-bound}\ref{lem:A-lower-bound-i} and inequality~\eqref{eq:d1}  give
\begin{align}
    \sup_{\bx \in \Omega_{\lambda}} \Pr(R_{\lambda}(\bx) > r+1) \leq  \exp \Bigl\{ -\lambda\inf_{\bx \in \Omega_{\lambda}}|A_{\theta, \phi}(\bx, \lambda^{-1/2}r)| \Bigr\}
    \leq \exp\{-\lambda b_\phi (\lambda^{-1/2}r \wedge \alpha_\lambda)^{2}\} \nonumber.
\end{align}
This upper bound matches (with $b'_\phi = b_\phi >0$)
that claimed in~\eqref{eq:tau-lambda-singly-aligned} for the cases $r \leq \alpha_{\lambda} \lambda^{1/2}$ and $r \geq \alpha_{\lambda} \lambda^{1/2}$.
Next consider the acute case, $\phi < \frac{\pi}{2}$. 
Here,   by Lemma~\ref{lem:A-lower-bound}\ref{lem:A-lower-bound-ii}  and~\eqref{eq:d1}, 
\begin{align}
 \Pr(R_{\lambda}(\bx) > r+1) \leq  \exp\{-\lambda|A_{\theta, \phi}(\bx, \lambda^{-1/2}r)|\}
    \leq \exp\{-\lambda b_\phi( (\lambda^{-1/2}r) \wedge \ell(\bx))^{2}\}. \nonumber
\end{align}
Recalling that $R_{\lambda}(\bx) \leq \lceil \lambda^{1/2}\nu_{\theta,\phi}(\bx) \rceil$, we have that $\Pr(R_{\lambda}(\bx) > r+1) = 0$ if $r \geq \lambda^{1/2} \nu_{\theta,\phi}(\bx)$.  Hence, for fixed $r$, defining $\Omega_\lambda(r) := \{ x \in \Omega_\lambda : \lambda^{1/2} \nu_{\theta,\phi}(\bx)  > r \}$,
\[
    \sup_{\bx \in \Omega_{\lambda}}\Pr(R_{\lambda}(\bx) > r+1)   = \sup_{\bx \in \Omega_{\lambda}(r)}\Pr(R_{\lambda}(\bx) > r+1)  
      \leq  \exp \Bigl\{ -\lambda \inf_{\bx \in \Omega_{\lambda}(r) } b_\phi (\lambda^{-1/2}r \wedge \ell(\bx))^{2} \Bigr\}. 
\]

For all $\bx = (x_1,x_2) \in \Omega_\lambda = [0,1] \times [\alpha_\lambda,1]$, we have that if $\ell(\bx) < \alpha_{\lambda}$ then $x_{1}\leq \alpha_{\lambda}$ and $\nu_{\theta,\phi}(\bx) \leq \ell(\bx) \sec\phi$, i.e., $\ell(\bx) \geq \nu_{\theta,\phi}(\bx) \cos \phi $.
Therefore the infimum of $\ell(\bx)$ over $\Omega_\lambda(r)$ is at least $\alpha_{\lambda} \wedge ( \lambda^{-1/2} r \cos \phi  )$. 
Hence
\begin{align}
     \sup_{\bx \in \Omega_{\lambda}}\Pr(R_{\lambda}(\bx) > r+1)
     \leq \exp\{ -\lambda b_\phi (\alpha_{\lambda} \wedge  \lambda^{-1/2} r\cos \phi )^{2} \}
     \leq  \exp\{ -\lambda ( b_\phi  \cos^2 \phi )  (\alpha_{\lambda} \wedge  \lambda^{-1/2} r)^{2} \}
     . \nonumber
\end{align}
This verifies the upper bound claimed in~\eqref{eq:tau-lambda-singly-aligned} for $b'_\phi = b_\phi \cos^2 \phi >0$.

Finally, for part~\ref{lem:tau-lambda-unaligned}, suppose that
$(\theta,\phi)$ is unaligned.  In this case, similarly to the previous one, but using Lemma~\ref{lem:A-lower-bound}\ref{lem:A-lower-bound-iii}, we have
\[
    \sup_{\bx \in \Omega_{\lambda}}\Pr(R_{\lambda}(\bx) > r+1) 
    %&= \sup_{\bx \in \Omega_{\lambda}(r) } \Pr(R_{\lambda}(\bx) > r) 
    \leq \exp\Bigl\{ -\lambda \inf_{\bx \in \Omega_{\lambda}(r)} b_{\theta,\phi} (\lambda^{-1/2}r \wedge \nu_{\theta,\phi}(\bx))^{2} \Bigr\} = \exp\{ -\lambda b_{\theta,\phi} (\lambda^{-1/2} r)^{2}\},
\]
which verifies~\eqref{eq:tau-lambda-unaligned} with $b'_{\theta,\phi} = b_{\theta,\phi}>0$.
\end{proof}

\begin{proof}[Proof of Lemma~\ref{lem:nadim}]
Let $\bx \in [0,1]^2$ and $A(\bx) := \cCtp(\bx) \cap [0,1]^2$.  Throughout the proof we write $\nu(\bx):= \nu_{\theta,\phi}(\bx)$.  If $A(\bx) = \{ \bx \}$ (which may happen when neither of the boundary rays of $\cCtp(\bx)$ intersects the interior of $[0,1]^2$) then $\nu(\bx) = 0$ and the statement is trivially true.  In all other cases, since $(\theta, \phi)$ are unaligned, the set $A(\bx)$ is a non-trivial polygon whose vertices comprise $\bx \in [0,1]^2$ and a finite number of points lying on the boundary of $[0,1]^2$.  Note, by definition of $\nu(\bx)$, and since the function $\by \mapsto \|\bx - \by\|$ is continuous and convex, its maximum over $\by \in A(\bx)$ (equal to $\nu(\bx)$) is attained at an extreme point of the convex set $A(\bx)$ 
 (see e.g.~\cite{MR2335496} for maxima of convex functions).

Suppose for the rest of the proof that  $A(\bx) \neq \{ \bx \}$. It suffices to prove that there is $c_{\theta,\phi} >0$ such that $| A (\bx) | \geq c_{\theta,\phi} \nu(\bx)^2$ for all $\bx$ with $\nu(\bx)>0$. Indeed,
$A (\bx)$ is a polygon with at most 6 vertices, so triangulating it from vertex $\bx$ gives a triangle inside $A(\bx)$ with area at least $| A (\bx )| /4$.  Then, writing $a,b$ for the lengths of the sides of that triangle incident to $\bx$ and $\alpha \in (0,\pi)$ for the angle at $\bx$, the bounds $\max\{a,b\} \leq \nu(\bx)$ and $\sin \alpha \leq 1$ imply
\[
\tfrac12\min\{a,b\} \nu(\bx) \geq \tfrac12ab\sin\alpha \geq \tfrac14c_{\theta,\phi} \nu(\bx)^2~~~\text{and}~~~\tfrac12\nu(\bx)^2\sin\alpha \geq \tfrac12ab\sin\alpha \geq \tfrac14c_{\theta,\phi} \nu(\bx)^2,
\]
so that both $\min\{a,b\}/\nu(\bx)$ and $\alpha$ have positive lower bounds (independent of $\bx$).  Such a triangle necessarily contains a sector as claimed in the statement of the lemma.

Suppose, in order to derive a contradiction, that there is no such $c_{\theta,\phi} >0$. Then there exist $\bx_1, \bx_2, \ldots \in [0,1]^2$, with $\nu (\bx_n)>0$ for all $n$, such that $|A(\bx_n)| /\nu(\bx_n)^2 \to 0$ as $n \to \infty$.
Suppose there is a subsequence of $\bx_n$ along which $\nu (\bx_n)$ is bounded away from zero. Then, by passing to a further subsequence, $\bx_n \to \bx \in [0,1]^2$, and, by continuity, $| A(\bx_n) | / \nu(\bx_n)^2 \to | A (\bx) | / \nu(\bx)^2$. But if $\nu (\bx) >0$ then $A(\bx)$ must have positive area (since the cone is not aligned). This contradicts the choice of $\bx_n$, meaning that we must have $\nu (\bx_n) \to 0$ as $n \to \infty$.

Consider the set $B_n := ( A (\bx_n) - \bx_n )/ \nu (\bx_n) = \cCtp (\0) \cap K_n$
where $K_n := ([0,1]^2 - \bx_n)/\nu (\bx_n)$. 
After taking a subsequence, each of the four scaled perpendicular distances from $\0$ to the sides of the square $K_n$ converges in $[0,\infty]$. Since opposite scaled distances sum to $1/\nu(\bx_n)\to\infty$, in each coordinate at most one limiting boundary is finite. Hence $K_n$ converges locally to $K_\infty$, where $K_\infty$ is either the plane, a horizontal/vertical half-plane, or the intersection of one horizontal and one vertical half-plane.
Now $B_n$ is contained in the closed ball of unit radius, by the definition of $\nu(\bx_n)$, and $B_n$ contains a point of norm $1$. Passing to a subsequence, $B_n \to B_\infty = \cCtp (\0) \cap K_\infty$
in Hausdorff distance, with $B_\infty$ non-trivial. If $B_\infty$ had zero area, this intersection would have to degenerate to a line segment or ray. But that can occur only when a boundary ray of $\cCtp (\0)$ coincides with a horizontal or vertical boundary direction of $K_\infty$. That is  excluded by the unaligned hypothesis. Therefore $|B_\infty|>0$.
On the other hand, continuity of area under Hausdorff convergence of compact convex sets gives
\[
|B_\infty|
=\lim_{n\to\infty} |B_n|
=\lim_{n\to\infty} \frac{|A(\bx_n)|}{\nu(\bx_n)^2}
=0,
\]
which is a contradiction.
\end{proof}

Now we can complete the proof of Proposition~\ref{prop:moments-bounds}.
 
 \begin{proof}[Proof of Proposition~\ref{prop:moments-bounds}] For all $\bx \in \Omega_{\lambda}$ and for all $\lambda \geq 1$, by Lemma~\ref{lem:R-to-D-relations}, we have
 \begin{align}
      \cDtpl(\bx,\cP_{\lambda}) =  \lambda^{\frac{1}{2}} \cDtp(\bx,\cP_{\lambda}) \leq \lceil  \lambda^{\frac{1}{2}} \cDtp(\bx,\cP_{\lambda}) \rceil \leq R_{\lambda}(\bx). 
      \label{eq:D-by-R-bound}
 \end{align}
Similarly, using Lemma~\ref{lem:R-monotonicity}, 
we have for all $\bx \in \Omega_{\lambda}$,
all $\by \in \Omega_{\lambda}$,
and all $\lambda \geq 1$, that
 \begin{align}
     \cDtpl(\bx,\cP_{\lambda} \cup \{\by\})  \leq \lceil  \lambda^{\frac{1}{2}} \cDtp (\bx,\cP_{\lambda}\cup \{\by\}) \rceil  &\leq R(\lambda^{\frac{1}{2}}\bx, \lambda^{\frac{1}{2}} (\cP_{\lambda}\cup \{\by\}), [0,\lambda^{\frac{1}{2}}]^{2}) \nonumber \\
    & \leq R(\lambda^{\frac{1}{2}}\bx, \lambda^{\frac{1}{2}}\cP_{\lambda}, [0,\lambda^{\frac{1}{2}}]^{2}) = R_{\lambda}(\bx). \nonumber
 \end{align}
 So to prove both statements~\ref{prop:moments-bounds-a} and~\ref{prop:moments-bounds-b}, it suffices to prove that $\sup_{\lambda \geq 1}\sup_{\bx \in \Omega_{\lambda}} \Exp[R_{\lambda}(\bx)^{p}] < \infty$. 

 First suppose that $(\theta,\phi)$ is singly aligned.
For all $p>0$, using the bounds in Lemma~\ref{lem:tau-lambda}\ref{lem:tau-lambda-singly-aligned}, 
\begin{align*} 
    \sup_{\bx \in \Omega_\lambda} \Exp[R_{\lambda}(\bx)^{p}] \leq 1 + \int_{1}^{\infty} \tau_\lambda (s^{1/p} ) \ud s 
    & \leq 1+ \int_1^\infty \re^{-b_\phi' (s^{1/p}-1)^2 } \ud s + 
    \int_{\alpha^p_\lambda \lambda^{p/2}}^{(1+\sqrt{2\lambda})^{p}} \re^{-\lambda b_\phi' \alpha_\lambda^2}  \ud s \\
    & \leq C_{\phi,p} + (1+\sqrt{2\lambda})^{p} \re^{-\lambda b_\phi' \alpha_\lambda^2} ,
\end{align*}
where $C_{\phi,p} < \infty$. 
Here, provided $c \geq c_{\phi,p}:= p/(2b'_\phi)$,
$\sup_{\lambda \geq 1}  (1+\sqrt{2\lambda})^{p} \re^{-\lambda b_\phi' \alpha_\lambda^2} < \infty$,
since for all $\lambda$ large enough we have $\alpha_\lambda \geq \sqrt{ c \log \lambda /\lambda }$ so that 
$\re^{-\lambda b_\phi' \alpha_\lambda^2} \leq \lambda^{-cb_\phi'}$. If $(\theta,\phi)$ is unaligned, then
by Lemma~\ref{lem:tau-lambda}\ref{lem:tau-lambda-unaligned},
\begin{align*} 
    \sup_{\bx \in \Omega_\lambda} \Exp[R_{\lambda}(\bx)^{p}] \leq 1 + \int_{1}^{\infty} \tau_\lambda (s^{1/p} ) \ud s 
    & \leq 1 + \int_1^\infty \re^{-b_{\theta,\phi}' (s^{1/p}-1)^2 } \ud s \leq C_{\theta,\phi,p}  < \infty ,
\end{align*}
for every $p >0$.
This completes the proof.
\end{proof}

Recall from Lemma~\ref{lem:tau-lambda} that 
$\tau_{\lambda}(s) =  \sup_{\bx \in \Omega_{\lambda}}\Pr(R_{\lambda}(\bx) > s)$. Then we further define,
for $s>0$, 
$\tau(s) := \sup_{\lambda \geq 1}\tau_{\lambda}(s)$.
The following result shows \emph{power law stabilization} for the MDSF total edge length functional.

\begin{lemma} 
\label{lem:tau-moments}
\begin{thmenumi}[label=(\roman*)]
        \item\label{lem:tau-moments-i}
        If $(\theta,\phi)$ is singly aligned, then for 
        every $q \in (0, 2 b'_\phi c)$, where $b'_\phi$ is as in Lemma~\ref{lem:tau-lambda}\ref{lem:tau-lambda-singly-aligned} and $c>0$ is as in the definition of $\alpha_\lambda$, 
$\sup_{s \geq 1}s^{q}\tau(s) < \infty$.
\item\label{lem:tau-moments-ii} If $(\theta,\phi)$ is unaligned, 
then $\sup_{s \geq 1}s^{q}\tau(s) < \infty$ for every $q>0$.
\end{thmenumi}
\end{lemma}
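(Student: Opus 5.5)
The plan is to read the bounds directly off Lemma~\ref{lem:tau-lambda}. Since $\tau$ is non-increasing, it suffices to bound $\tau_\lambda(s)$ uniformly in $\lambda \geq 1$ for $s \geq 2$; the range $s \in [1,2]$ is handled by $\tau \leq 1$. For $s \geq 2$ we write $s = r+1$ with $r = s-1 \geq s/2$.

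For the unaligned case~\ref{lem:tau-moments-ii}, Lemma~\ref{lem:tau-lambda}\ref{lem:tau-lambda-unaligned} gives, for every $\lambda$,
\[
\tau_\lambda(s) \leq \exp\{-b'_{\theta,\phi}(s-1)^2\} \leq \exp\{-b'_{\theta,\phi}s^2/4\}
\]
when $r \leq \sqrt{2\lambda}$, and $\tau_\lambda(s) = 0$ otherwise. Hence $\tau(s) \leq \re^{-b' s^2/4}$, and $s^q \tau(s)$ is bounded for every $q > 0$.

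For the singly-aligned case~\ref{lem:tau-moments-i}, fix $\lambda \geq 1$ and consider the three regimes of~\eqref{eq:tau-lambda-singly-aligned}.
\begin{itemize}
\item If $r \leq \alpha_\lambda\sqrt\lambda$, then $\tau_\lambda(s) \leq \re^{-b'_\phi s^2/4}$, as in the unaligned case.
\item If $r > \sqrt{2\lambda}$, the bound is zero.
\item If $\alpha_\lambda\sqrt\lambda \leq r \leq \sqrt{2\lambda}$, then $\tau_\lambda(s) \leq \exp\{-\lambda b'_\phi\alpha_\lambda^2\}$. For $\lambda \geq \lambda_0$ we have $\alpha_\lambda^2 \geq c\log\lambda/\lambda$, so this is at most $\lambda^{-b'_\phi c}$. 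Since also $s \leq 1+\sqrt{2\lambda} \leq 3\sqrt\lambda$, we get
\[
s^q \tau_\lambda(s) \leq 3^q \lambda^{q/2 - b'_\phi c},
\]
which is bounded uniformly in $\lambda$ exactly when $q \leq 2b'_\phi c$. This is where the restriction on $q$ comes from.
\end{itemize}

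The remaining step, and the only delicate one, is handling $\lambda \in [1,\lambda_0)$, where the logarithmic lower bound on $\alpha_\lambda$ need not hold. There I would simply note that $\tau_\lambda(s) = 0$ for $s > 1+\sqrt{2\lambda_0}$, so these $\lambda$ contribute $s^q\tau_\lambda(s) \leq (1+\sqrt{2\lambda_0})^q$, a constant.

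Taking the supremum over $\lambda \geq 1$ and combining the three regimes with this bound gives $\sup_{s\geq1}s^q\tau(s) < \infty$. The argument in fact works for all $q \leq 2b'_\phi c$, so in particular on the open range stated.
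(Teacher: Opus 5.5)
Your proof is correct and is essentially the paper's argument. Both read the bounds off Lemma~\ref{lem:tau-lambda}, use that $\tau_\lambda(s)$ can be nonzero only when $\lambda \geq (s-1)^2/2$, and then trade the factor $\lambda^{-b'_\phi c}$ for a power $s^{-2b'_\phi c}$. The paper fixes $s$ and takes the supremum over $\lambda \geq s^2/2$; you fix $\lambda$ and bound $s^q\tau_\lambda(s) \leq 3^q\lambda^{q/2-b'_\phi c}$. Your explicit treatment of $\lambda<\lambda_0$ plays the role of the paper's ``for all $s$ sufficiently large''.
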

\begin{proof} 
In each case, it is enough to show that $\lim_{s \to \infty} s^q \tau (s) = 0$.
For part~\ref{lem:tau-moments-i}, we have from 
Lemma~\ref{lem:tau-lambda}\ref{lem:tau-lambda-singly-aligned}
that $\tau_\lambda (s+1) \leq \exp ( - b_\phi' ( s^2 \wedge \lambda \alpha_\lambda^2))$ for $s \leq \sqrt{2\lambda}$ and $\tau_\lambda (s+1) =0$ for $s > \sqrt{2 \lambda}$. Hence $\tau(s+1) \leq \sup_{\lambda \geq s^2/2} \exp ( -b_\phi' (s^2 \wedge \lambda \alpha_\lambda^2 ) )$.
Thus for all $s$ sufficiently large,
\[ \tau (s+1) \leq \sup_{\lambda \geq s^2/2} \exp ( -b_\phi' (s^2 \wedge c \log \lambda ) )
= \exp ( -b_\phi' (s^2 \wedge c \log (s^2/2) ) )
= (s^2/2)^{-b_\phi' c} .\]
So, as required,  $\lim_{s \to \infty} s^q \tau (s) = 0$
provided $q < 2 b_\phi' c$, as claimed in part~\ref{lem:tau-moments-i}.
Part~\ref{lem:tau-moments-ii} follows immediately from Lemma~\ref{lem:tau-lambda}\ref{lem:tau-lambda-unaligned}, which shows 
$\tau(s+1) = \sup_{\lambda \geq 1}\tau_{\lambda}(s+1) \leq \exp (- b'_{\theta,\phi}  s^{2} )$.
\end{proof}

\subsection{Proof of central limit theorem and variance asymptotics}
\label{sec:clt-proof}

We are now ready to prove Theorem~\ref{thm:bulk-clt}, which essentially follows from an application of Theorems~$2.1$ and~$2.2$ of~\cite{mp-gaussian}, which give a central limit theorem and variance asymptotics for general stabilising functionals.   Before doing so, we claim there is a positive lower bound on the variance of $\cLbulk$, which justifies that the constant $s_\phi$ appearing in  Theorem~\ref{thm:bulk-clt} is positive.
    \begin{proposition} 
    \label{prp:proof1.5.4.} 
    There exists $v_0 > 0$ (depending on $\phi$), such that, for all $\lambda$ sufficiently large, 
    \begin{equation}
\Var ( \cLbulk ) \geq v_0. \nonumber
    \end{equation}
\end{proposition}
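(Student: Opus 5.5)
The plan is to use a block (local-configuration) variance lower bound: condition on a $\sigma$-field that freezes almost everything, and exhibit many independent local "gadgets" that each contribute order-$\lambda^{-1}$ conditional variance. Cover a fixed sub-rectangle of the bulk, say $[1/4,3/4]^2$ (contained in $\cRbulk$ for large $\lambda$), by $m \asymp \lambda$ disjoint boxes $Q_1,\dots,Q_m$ of side $K\lambda^{-1/2}$, with $K$ a large constant, separated by corridors of width $K\lambda^{-1/2}$. Let $\cF$ be the $\sigma$-field generated by $\cP_\lambda$ outside $\cup_i Q_i$, together with the indicator variables below. By the conditional variance formula,
\[
\Var \cLbulk \geq \Exp \Var(\cLbulk \mid \cF).
\]
Write $\cLbulk = \sum_i \Delta_i + (\text{$\cF$-measurable})$ on a good event. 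The aim is for the contributions $\Delta_i$ to be conditionally independent, each with conditional variance at least $c\lambda^{-1}$ with probability bounded below.

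The gadget is built with the companion geometry of Proposition~\ref{prop:cone-companions}. In the scaled picture (unit intensity, $Q_i$ of side $K$), define a good event $G_i$ with probability $p_0>0$ that depends on $\lambda$ only through the scaling. On $G_i$, a thin shell inside $Q_i$ near its boundary is densely occupied, so that every point outside or in the shell has its directed nearest neighbour within distance $1$. This follows from Proposition~\ref{prop:geom}/Corollary~\ref{def:locall-finite-set} applied at a finer tile scale: having every small tile of the shell and its companion occupied fixes the nearest neighbours locally. Meanwhile the core of $Q_i$ contains exactly one point, and we let that point's position $\bx$ vary uniformly over a small disc $D$ in the core. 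The dense shell shields the outside, so points outside $Q_i$ never select the core point, or do so only in a controlled way. The choice of the core point's own neighbour then depends only on points inside $Q_i$. Hence $\Delta_i$ is a function of the core point's position and of $Q_i$'s configuration alone, and the $\Delta_i$ are conditionally independent given $\cF$ and the shells.

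Next I would arrange the core so that $\Delta_i$ genuinely varies with $\bx$. The cleanest arrangement has the core empty apart from $\bx$ and one fixed target point $\by$ in the cone $\cCtp(\bx)$ at distance about $2$, with all shell points farther away. Then $\Delta_i$ contains the term $\|\bx-\by\|$, which varies by a constant amount as $\bx$ ranges over $D$. The reverse effects, namely shell points whose nearest neighbour may become $\bx$, are handled by placing $D$ so that no shell point has $D$ in its cone closer than its own shell neighbour. This needs a careful but elementary check using only $\phi<\pi$. It gives $\Var(\Delta_i\mid\cF)\geq c\lambda^{-1}$ on $G_i$, since lengths scale by $\lambda^{-1/2}$.

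Summing over the $m\asymp\lambda$ boxes gives
\[
\Exp\Var(\cLbulk\mid\cF)\ \geq\ \sum_i \Pr(G_i)\,c\lambda^{-1}\ \geq\ c\,p_0\,m/\lambda\ \geq\ v_0>0.
\]
A nuisance is that under the Poisson model the core point's position is not literally uniform. I would instead condition on the number and positions of points in $Q_i$ minus the core disc, and on $D$ containing exactly one point, whose conditional law is then uniform on $D$. The main obstacle is the shielding/geometry step: one must guarantee, uniformly over $\bx\in D$, that moving the core point changes no edge outside $Q_i$ and changes its own edge length by a nondegenerate amount. For acute cones the outside points' cones are narrow and cause no issue. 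For obtuse cones, and in the unaligned case, I would handle this by choosing the shell densities via the companion tiles of Proposition~\ref{prop:cone-companions} at a fine scale, so that every shell point has a companion-tile neighbour nearer than $D$.
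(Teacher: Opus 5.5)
Your plan is the standard local-configuration argument the paper has in mind. The paper omits the proof, citing Avram--Bertsimas and Theorem~2.2 of Penrose--Wade for a ready-made criterion. Your route is a direct implementation of it: $m\asymp\lambda$ shielded boxes of scaled size $O(1)$, each contributing conditional variance of order $\lambda^{-1}$ on an event of probability bounded below, combined through $\Var \cLbulk \geq \Exp \Var(\cLbulk \mid \mathcal{F})$.

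Two corrections are needed inside the gadget.

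\emph{Shielding.} Some shell points on the inner walls facing the empty core have no neighbour within distance $1$, because their cones look into the core. Proposition~\ref{prop:geom} cannot be invoked for them, since their companion tiles, like those of points in the random corridors, need not be occupied. Shielding of outside points should instead come directly from convexity of the cone: near the segment from an outside point to a core point, the cone contains a whole fine tile of the dense shell.

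\emph{Reverse effects.} The requirement that no shell point selects $\bx$ does not follow from $\phi<\pi$ alone; for example, a central $D$ in a square core fails for narrow cones. It has to be arranged through the position of $D$, or the shape of the core, so that every shell point of $\cC_{\theta+\pi,\phi}(\bx)$ has a wall point in its cone strictly closer than $\bx$.
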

We omit the proof of Proposition~\ref{prp:proof1.5.4.} here, as the idea is standard, and goes 
back at least to Proposition~5 of~\cite{avram-bertsimas}: roughly speaking, 
one shows that in a region where one expects to find $O(1)$ Poisson points, there is positive
probability of a `local' configuration that isolates a small number of Poisson points from the rest, and contributes positive variance. See  \cite{pw-south-west,pw-south} for such arguments in the MDSF with rectangular cones of directions, and Theorem~2.2 of~\cite{pw-multivariate} for a conveniently applicable general statement.

\begin{proof}[Proof of Theorem~\ref{thm:bulk-clt}]
We apply Theorems~$2.1$ and~$2.2$ of~\cite{mp-gaussian} to the functional $\cDtp$, which satisfies the required hypotheses, by the results from Section~\ref{sec:stabilization}.  Indeed, translation invariance is given by \eqref{eq:translation-invariant} and, since $\phi > 0$, the cone $\cCtp(\bx)$ almost surely contains a point of a homogeneous Poisson process on $\R^2$ at finite distance from $\bx$ and hence the homogeneous stabilisation condition (2.10) of \cite{mp-gaussian} is satisfied.  Proposition~\ref{prop:moments-bounds}
 shows that for $p=4$ there exists $c_{\phi,p} < \infty$ such that the required $p$-th moments conditions hold for all $c > c_{\phi,p}$. Lemma~\ref{lem:tau-moments}
shows that we may choose this $c$ large enough such that the required power-law stabilization condition holds with $q=304 > 2(150+6/p)$.  
This verifies the hypotheses of Theorems~$2.1$ and~$2.2$ of~\cite{mp-gaussian} (note also the choice of height $R S$ means that $\cRbulk$ are non-decreasing in $\lambda$) and we conclude that
\begin{align*}
    \Var(\cLbulk) \to s_{\phi}^{2}~~~\text{ and}~~~
 \tcLbulk \tod \cN(0,s^2_\phi)
\end{align*}
 as  $\lambda \to \infty$ for some $s_{\phi} < \infty$. By Proposition~\ref{prp:proof1.5.4.}, the limit $s_{\phi}^{2}$ is greater than or equal to $v_0 > 0$, and therefore positive.

Observe that, since $s_{\phi}$ can be expressed in terms of functionals of a homogeneous Poisson process on $\R^2$, which is rotationally invariant (see~\cite{mp-gaussian} for more details), it depends only on $\phi$ and not $\theta$.
\end{proof}

\section{Boundary effects for singly-aligned cones}
\label{sec:boundary-effects}

\subsection{Introduction}
We now turn our attention to singly-aligned cones and the proof of Theorem~\ref{thm:generalresult}\ref{thm:generalresult-singly-aligned}.  In this case, there are boundary effects which contribute a non-Gaussian component to the limit distribution, coming from edges near to the boundary of the unit square.  In this section we show that this boundary contribution converges in distribution to a random variable with distribution $\cQ$ as characterised by \eqref{eq:fixed-point}.
The results in this section are analogues of those for the doubly-aligned case from~\cite[\S 6]{pw-south-west}.

Recall from Section~\ref{sec:partioning-the-square} that for a singly-aligned cone we define a non-empty boundary region $\cRbndry=[0,1] \times [0,\lambda^{-\sigma}]$, for $\sigma \in (\frac{1}{2},\frac{2}{3})$; this is where we expect to see long edges. As in that section, we write $\cLbndry = \cL(\cP_\lambda,\cRbndry)$ for the contribution to the total length from points in $\cP_\lambda \cap \cRbndry$
 and $\tcLbndry = \cLbndry - \Exp \cLbndry$ for the centered random variable. The principal result of this section is the following convergence in distribution of $\tcLbndry$.
 
\begin{theorem} 
\label{thm:boundary-effects}
Let $\sigma \in (\frac{1}{2}, \frac{2}{3})$. 
If $(\theta,\phi)$ is singly-aligned, then
\begin{equation}
    \tcLbndry \tod  Q,  \text{ as } \lambda \to \infty\label{eq:boundary-effects},
\end{equation}
where $Q \sim \cQ$ and the distribution $\cQ$ is characterized by the fixed point equation~\eqref{eq:fixed-point}.
\end{theorem}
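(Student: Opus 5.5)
We argue by reduction to a one-dimensional ``on-line nearest-neighbour'' model, following~\cite[\S 6]{pw-south-west}. Throughout $\theta=\pi/2$, so the cone $\cCtp(\bx)$ has one boundary ray pointing horizontally to the left and the other pointing into the lower half-plane at angle $\phi$ from the upward vertical. Points of $\cP_\lambda\cap\cRbndry$ sit in a strip of height $\lambda^{-\sigma}$. Because $\sigma>1/2$, such a strip is thin compared with the typical spacing $\lambda^{-1/2}$ of Poisson points, and with high probability the nearest neighbour of a strip point is another strip point lying to its left. The strip height is not bounded by the cone. Since $\sigma<2/3$, however, the vertical contribution to each edge length is negligible.

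\textbf{Step 1: coupling with the horizontal projection.} Project $\cP_\lambda\cap\cRbndry$ onto the $x$-axis. This gives a homogeneous Poisson process on $[0,1]$ of intensity $\lambda^{1-\sigma}$; by scaling $x$, write it as $n\to\infty$ points. In the one-dimensional model each point joins the nearest \emph{earlier} point in the appropriate order. In the acute case, a point $\by$ lies in $\cCtp(\bx)$ essentially iff it is to the left of $\bx$ and $x_2-y_2\le (x_1-y_1)\cot\phi$. In the obtuse case, a point below $\bx$ with small horizontal offset also qualifies. In both cases the relevant ordering turns out to be the $x$-order together with a vertical ranking, and the structure is that of~\cite{pw-south-west}: sort strip points by $x_1$ and consider the ``minimal directed'' relation among them. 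We then show that
\[
\Exp\bigl|\cLbndry-\cL^{(1)}_\lambda\bigr|\to 0,
\]
where $\cL^{(1)}_\lambda$ is the total length of the one-dimensional on-line nearest-neighbour (``records'') functional on the uniform points. Two error sources must be controlled. The first is the vertical displacements: each edge has horizontal length $h$ and error at most $\lambda^{-2\sigma}/h$. The second is the set of strip points whose true nearest neighbour lies above the strip, or for which the cone ordering differs from the one-dimensional ordering. Both are bounded by summing over points, using the Poisson exponential bounds of Lemma~\ref{lem:tau-lambda} type together with the fact that only $O(\log)$ records occur in the structure.

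\textbf{Step 2: the one-dimensional limit.} For the one-dimensional functional, the centred length of the nearest-``left-and-lower'' neighbour graph on $n$ uniform points in $[0,1]$ is exactly the object studied in~\cite[\S 6]{pw-south-west}. The first point in the relevant order splits $[0,1]$ at a uniform position $U$. This yields the recursion
\[
\cL^{(1)}\eqd U+ U\cL'+(1-U)\cL'',
\]
with Poissonized independence, and the centring produces $U\log U+(1-U)\log(1-U)$. Rösler's contraction method~\cite{rosler} in the Zolotarev/Wasserstein-2 metric then gives convergence of the centred variable to $\cQ$. I will cite this directly, checking only that the scaling (the total horizontal extent is $1$) matches.

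\textbf{Main obstacle.} The hard part will be Step 1, namely showing rigorously that the planar cone relation inside the thin strip reduces to the one-dimensional model. The window $\sigma\in(1/2,2/3)$ enters here. The case $\sigma>1/2$ makes the probability that a strip point finds a strip neighbour within a short distance small. The case $\sigma<2/3$ ensures $\sum \lambda^{-2\sigma}/h\to0$, since there are about $\lambda^{1-\sigma}$ points with gaps of order $\lambda^{\sigma-1}$; this gives a total error of roughly $\lambda^{1-\sigma}\cdot\lambda^{-2\sigma}\lambda^{1-\sigma}=\lambda^{2-4\sigma}\to0$. Short horizontal gaps need a careful truncation. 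For the acute case I expect to have to treat the cone boundary separately, and will handle it the same way as in the obtuse case, with the lower ray essentially vertical at the strip scale.
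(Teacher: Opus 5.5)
Your overall architecture is the one the paper uses. You couple $\cLbndry$ to the directed linear forest on the $x$-coordinates of the strip points, taken in order of increasing $y$-coordinate. You show the difference tends to $0$ in $L^1$ or $L^2$. Then you invoke Penrose--Wade's limit theorem for the DLF. The gap is in Step~1, which is where essentially all the work lies, and there both your error sources and your proposed tools miss.

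First, no strip point can have its nearest neighbour above the strip. With $\theta=\pi/2$ the cone lies in the closed lower half-plane, so the MDSF on $\cP_\lambda\cap\cRbndry$ is determined by those points alone, deterministically.

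Second, the vertical-displacement error does not need the $\lambda^{-2\sigma}/h$ refinement and its $1/h$ truncation problem. Off the discrepancy events described below, $|\Delta_i|=O(\lambda^{-\sigma})$, and $M_\lambda\lambda^{-\sigma}=O(\lambda^{1-2\sigma})$ in $L^2$. Your own exponent $2-4\sigma$ shows that the condition doing the work is $\sigma>1/2$, not $\sigma<2/3$. The bound $\sigma<2/3$ is not needed for this theorem at all; the paper uses it only for the intermediate region.

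The genuine difficulty is the event $A_i$ that the two models disagree on candidacy: an earlier point lies in a sliver of horizontal width $O(\lambda^{-\sigma})$ beside $X_i$. In the obtuse case the sliver is to the right; in the acute case it is to the left but outside the cone. On $A_i$ the error $\Delta_i$ is not small. It is the size of a whole edge, i.e.\ of a spacing of $X_1,\dots,X_i$.

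What is missing is the estimate that makes these terms negligible. It has three parts.

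\emph{Probability of discrepancy.} $\Pr(A_i)\le 1-(1-c_\phi\lambda^{-\sigma})^{i-1}\le Ci\lambda^{-\sigma}$, which is $O(\lambda^{1-2\sigma})$ uniformly in $i\le 3\lambda^{1-\sigma}$.

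\emph{Size of the error on $A_i$.} One needs $|\Delta_i|\le \max_{j\le i}I_{i,j}+O(\lambda^{-\sigma})$. For acute cones this is \emph{not} ``the same as the obtuse case''. There the MDSF edge is the long one, so you cannot bound $\Delta_i$ by the DLF edge. The paper instead bounds $\|Z_{K(i)}-Z_i\|$ by $\|Z_{\tJ(i)}-Z_i\|$, where $\tJ(i)$ is the rightmost earlier point to the left of $X_i-\lambda^{-\sigma}\cot\phi$. That point lies in the cone, and together with $I(i)$ it forms a consecutive pair of order statistics.

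\emph{Moments.} $\Exp[\max_{j\le i}I_{i,j}^4]=O(i^{-4}\log^4 i)$. Combining the three by Cauchy--Schwarz gives $O(\lambda^{1/2-\sigma}\log^4\lambda)$.

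The tools you name supply none of this. Lemma~\ref{lem:tau-lambda} is a bulk estimate for $\bx\in\Omega_\lambda$, a region that excludes the strip. It would predict edges of order $\lambda^{-1/2}$, whereas in the strip the $i$-th edge is of order $1/i$. The $O(\log)$ records are points with $J(i)=\infty$, and these contribute at most $O(\lambda^{-\sigma})$ each. Without the factor $i\lambda^{-\sigma}$ coming from the sliver, summing edge lengths over $i$ gives only a bound of order $\log\lambda$, which does not vanish.

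Separately, Penrose--Wade's theorem is for a fixed number $n$ of points. With $M_\lambda\sim\Poi(\lambda^{1-\sigma})$ you must also show that the random centring $q_{M_\lambda}=\Exp[\cLdlf{\cV_\lambda}\mid M_\lambda]$ can be replaced by its mean. This follows from $q_n=\log(1+n)+c+o(1)$ and Poisson concentration; appealing to ``Poissonized independence'' does not do it.

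Two minor slips: the lower boundary ray is at angle $\pi/2+\phi$ from the upward vertical, and the acute-case cone condition is $y_2\le x_2$ together with $x_2-y_2\le (x_1-y_1)\tan\phi$, not $\cot\phi$.
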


To prove Theorem~\ref{thm:boundary-effects} we construct a \emph{directed linear forest} (DLF) defined on a sequence of random variables that are coupled to the points of the Poisson process $\cP_\lambda$, and show that total edge length of MDSF on points near the bottom boundary is asymptotically close to the total edge length of this DLF.
The DLF was introduced by Penrose and Wade \cite{pw-south-west}; we summarise what will be relevant for the present paper in Section~\ref{sec:DLF}.

\subsection{Directed linear forest}
\label{sec:DLF}

In the DLF, every point in a sequence of independent uniform random points within an interval is connected to its closest left neighbour among the preceding points;  see~\cite{pw-south-west} for further details. 
Let $\cS$ be a finite sequence in $(0,1)$, i.e., $\cS = (S_1, \ldots, S_m) \in (0,1)^m$.
 DLF($\cS$), 
is constructed by, for every $i \in \{2,3,\ldots,m\}$, inserting a directed edge
from $S_i$ to the closest $S_j$ with $1 \leq j<i$ and $S_j<S_i$, i.e.,
every point after the first in the sequence is joined by an edge to its
nearest left-neighbour, if there is one. Denote the total edge length of DLF($\cS$) by
\begin{equation}
    \label{eq:DLF-length}
 \cLdlf{ {S_1, \ldots, S_m} } := \sum_{i=2}^m H_i , \text{ where } H_i :=   S_i - \max \{ S_j : 1 \leq j < i, S_j < S_i \}   ,
\end{equation}
whenever the set inside the $\max$ is non-empty, and $H_i:= 0$ if the set inside the $\max$ is empty. 
We recall Theorem~3.1 of Penrose and Wade~\cite{pw-south-west} below for the total length of the directed linear forest on uniform random points in the interval.

\begin{theorem}[Penrose and Wade~\cite{pw-south-west}]
\label{thm:PW-DLF}
For $n \in \N$, let $\cU_n := (U_1, U_2, \ldots, U_n)$ where $U_1, \ldots, U_n$ are i.i.d.~$\unif{0}{1}$. 
Then, as $n \to \infty$, the total edge length of DLF($\cU_n$) satisfies
    \[    \cLdlf{\cU_n} - \Exp \cLdlf{\cU_n} \tod  Q, \]
    where $Q \sim \cQ$ has distribution characterized by the fixed-point equation~\eqref{eq:fixed-point}.
\end{theorem}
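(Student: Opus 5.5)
The plan is to use the recursive (divide-and-conquer) structure of the directed linear forest generated by the first point, together with the contraction method of R\"osler (and the Neininger--R\"uschendorf extension to random sizes). It is cleanest to work first with an auxiliary quantity, $\cLdlf{0,U_1,\ldots,U_n}$ (call it $D_n$), the DLF length when an extra point at $0$ is placed first in the sequence. Then every $U_i$ has a left neighbour, and $D_n$ is the natural object for a recursion.

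\textbf{Step 1: distributional recursion for $D_n$.} Condition on $U_1=U$, and let $N$ be the number of later points in $(0,U)$, so $N\sim\mathrm{Bin}(n-1,U)$ given $U$. The point $U_1$ joins $0$, giving an edge of length $U$.
\begin{itemize}
\item The points in $(0,U)$ never use anything to their right. After scaling by $1/U$ they form an independent copy of the origin-augmented DLF on $N$ uniform points.
\item The points in $(U,1)$ always have $U_1$ available as a left candidate and never use points of $(0,U)$. After shifting by $U$ and scaling by $1/(1-U)$ they form a conditionally independent copy on $n-1-N$ points.
\end{itemize}
Hence
\[
D_n \eqd U + U D'_{N} + (1-U) D''_{n-1-N}.
\]

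\textbf{Step 2: mean asymptotics.} From this recursion, or directly, since the $i$-th point's edge has mean of order $1/i$, I will show that $\Exp D_n=\log n+\kappa+o(1)$ for a constant $\kappa$. I will also show $\Var D_n=O(1)$, by an $L^2$ induction using $\Exp[U^2+(1-U)^2]=2/3<1$.

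Substituting into the centred recursion, and using $N/n\to U$ a.s., the toll term becomes
\[
U+U\log(N/n)+(1-U)\log\bigl((n-1-N)/n\bigr)+o(1)\;\to\;U+U\log U+(1-U)\log(1-U)
\]
in $L^2$. This is exactly the toll in~\eqref{eq:fixed-point}.

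\textbf{Step 3: contraction argument.} Apply the contraction theorem in the Zolotarev/Wasserstein $\ell_2$ metric on mean-zero, finite-variance laws. The coefficients satisfy $\Exp[U^2+(1-U)^2]=2/3<1$, and the coefficients and toll converge in $L^2$, so the standard theorem gives $D_n-\Exp D_n\tod Q$, where $Q$ is the unique solution from R\"osler's Theorem~3.

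\textbf{Step 4: transfer from $D_n$ to $\cLdlf{\cU_n}$.} The two lengths differ exactly by the edges to the virtual origin. These are the successive left-to-right minima (lower records) $U_1=M_1>M_2>\cdots$ of the sequence, so
\[
D_n-\cLdlf{\cU_n}=\sum_k M_k .
\]
This sum converges a.s. and in $L^2$, since $M_{k+1}/M_k$ are i.i.d.\ uniform. So I would pair the recursion for $\cLdlf{\cU_n}$ with that of $D_n$. Splitting at $U_1$ again gives
\[
\cLdlf{\cU_n}\eqd U\,\cLdlf{\cdot}'_{N}+(1-U)D''_{n-1-N},
\]
a coupled two-component system to which the multivariate contraction method applies.

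\textbf{Main obstacle.} The delicate point is Step~4: checking that the limit of this coupled system for the record-free length $\cLdlf{\cU_n}$ is the law $\cQ$ of~\eqref{eq:fixed-point}, rather than a law differing from it by the centred record sum. My first attempt would be to solve the two-component fixed-point system explicitly and compare it with~\eqref{eq:fixed-point}. If the two laws disagree, I would instead re-examine the normalisation of the toll (how the edge of length $U$ is attributed) in the convention of~\cite{pw-south-west}, since the conclusion of the statement depends on matching that convention.
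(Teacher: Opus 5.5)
Your Steps 1--3 are sound, up to routine details such as using $\log(1+N)$ in place of $\log N$. However, they prove $D_n-\Exp D_n\tod Q$ for the \emph{origin-augmented} forest, whereas the statement concerns the record-free length $\cLdlf{\cU_n}$ of \eqref{eq:DLF-length}. Your Step~4 is exactly where that gap lies, and there the proposal stops. You do not determine the limit of the coupled system, you leave open that it might differ from $\cQ$, and neither fallback works. Fixed-point laws of this type have no closed form to solve for and compare. There is also no convention to re-examine: the DLF is defined unambiguously, and the statement is true as written. So the identification the statement asserts is missing, even though you correctly located it as the crux.

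The missing idea is a reflection symmetry $U\leftrightarrow 1-U$.
\begin{itemize}
\item Your record decomposition gives $\Exp D_n-\Exp\cLdlf{\cU_n}=\sum_{i=1}^n\frac{1}{i(i+1)}\to 1$. Hence $\Exp\cLdlf{\cU_n}=\log n+\kappa-1+o(1)$, consistent with \eqref{eqt:eqt1.0} since $\kappa=\gamma-1$.
\item Centre your recursion $\cLdlf{\cU_n}\eqd UL'_N+(1-U)D''_{n-1-N}$ using this mean. The shift by $-1$ turns the toll into $(1-U)+U\log U+(1-U)\log(1-U)$. So any limit $\widetilde L$ must satisfy
\[
\widetilde L\eqd U\widetilde L'+(1-U)Q''+(1-U)+U\log U+(1-U)\log(1-U),
\]
with $Q''\sim\cQ$ (your Step~3 limit) independent of $(U,\widetilde L')$.
\item The map sending the law of $\widetilde L'$ to the law of the right-hand side is a strict contraction in $\ell_2$ on centred finite-variance laws, with constant $(\Exp U^2)^{1/2}=3^{-1/2}$. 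So it has a unique fixed point.
\item $\cQ$ is that fixed point. If $\widetilde L'\sim\cQ$, write $V=1-U$ and swap the i.i.d.\ pair $(\widetilde L',Q'')$. The right-hand side then becomes exactly the right-hand side of \eqref{eq:fixed-point}.
\end{itemize}
Hence the record-free limit is also $\cQ$. This holds even though $\cLdlf{\cU_n}-\Exp\cLdlf{\cU_n}$ differs from $D_n-\Exp D_n$ by the non-degenerate, correlated centred record sum.

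With this in hand you can drop the bivariate system. Run a univariate contraction argument for $\cLdlf{\cU_n}$, treating $(1-U)(D''_{n-1-N}-\Exp D_{n-1-N})$ as an independent input that converges in $\ell_2$ to $(1-U)Q''$. This is also safer. For the joint vector $(\cLdlf{\cU_n},D_n)$ the coefficient matrices are $\mathrm{diag}(U,U)$ and $(1-U)\bigl(\begin{smallmatrix}0&1\\0&1\end{smallmatrix}\bigr)$. In the Euclidean norm these give $\Exp\sum_r\|A_r\|_{\mathrm{op}}^2=\frac13+\frac23=1$, so the standard multivariate contraction condition fails unless you reweight the norm.
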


To prove Theorem~\ref{thm:boundary-effects}, we couple the (total length of the) MDSF on the two-dimensional set of points in the boundary region to  the (total length of the) DLF defined on the horizontal components of those points, ordered by increasing vertical component.
The notation for this is as follows.

\begin{definition}[Boundary sequences]
Let $\cW_\lambda := \cP_\lambda \cap  \cRbndry$ and $M_{\lambda} := \card {\cW_\lambda}$.
Enumerate points of $\cW_\lambda$ as $\cW_\lambda = (Z_i = (X_i,Y_i),  1 \leq i \leq M_{\lambda})$ by order of increasing vertical coordinate: $Y_1< Y_2 < \cdots < Y_{M_{\lambda}}$.
Then set $\cV_\lambda := (X_i, i = 1,2, \dots, M_{\lambda})$. 
\end{definition}

\begin{theorem} 
\label{thm:DLF-singly-aligned-approx}
Suppose that $(\theta,\phi)$ is singly-aligned. 
Let $\sigma \in (\frac{1}{2}, \frac{2}{3})$. 
Then,
\begin{equation}
  \cLbndry - \cLdlf{\cV_\lambda} \toL{2} 0,  \text{ as } \lambda \to \infty\label{eqn:1.7.3}.
\end{equation}
\end{theorem}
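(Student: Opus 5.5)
We compare, point by point, the MDSF edge from each $Z_i\in\cW_\lambda$ with the DLF edge from $X_i$. With $\theta=\pi/2$ the cone $\cCtp(Z_i)$ is bounded by the leftward horizontal ray from $Z_i$. In the obtuse case the cone also contains the downward-left directions up to angle $\phi$. In the acute case it contains only directions within $\phi$ of the leftward horizontal, going downward.

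The DLF candidate for $Z_i$ is the point $Z_j$ with $j<i$, i.e.\ $Y_j<Y_i$, and $X_j<X_i$ maximal. This point lies in the quadrant below-left of $Z_i$. Since the strip has height $\lambda^{-\sigma}$, the vertical offset is at most $\lambda^{-\sigma}$.

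We write $\cLbndry-\cLdlf{\cV_\lambda}=\sum_i (\cDtp(Z_i,\cP_\lambda)-H_i)$ and identify three kinds of discrepancy. They are listed here and bounded in the next paragraph.
\begin{enumerate}[label=(\roman*)]
\item The \emph{Euclidean-versus-horizontal} error: when the MDSF neighbour is the DLF point, $\|Z_i-Z_j\|-(X_i-X_j)\le (Y_i-Y_j)^2/(X_i-X_j)\le \lambda^{-2\sigma}/H_i$.
\item The \emph{different neighbour} error: the MDSF neighbour is some other point. It may lie in the cone but outside the strip, above it in $\cRint$ (obtuse case: points to the upper left are also allowed). Or it may be a strip point not in the below-left quadrant, or a closer point to the left at angle within the cone that the DLF ignores, or vice versa.
\item The \emph{no-DLF-neighbour} case: $H_i=0$ but the MDSF edge may exit the strip.
\end{enumerate}

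The plan is to bound the expected squared total of each error. The key observation is that in the strip the horizontal gaps $H_i$ are typically of order $1/(\lambda\cdot\lambda^{-\sigma}\cdot(\text{rank}))$, i.e.\ the DLF on $M_\lambda\approx\lambda^{1-\sigma}$ points. Meanwhile the natural MDSF scale $\lambda^{-1/2}$ dominates $\lambda^{-\sigma}$ only if $\sigma>1/2$. For a point at height $y$ in the strip, a cone point outside the strip closer than the DLF neighbour requires a Poisson point in a region of area about $H_i^2$ above height $\lambda^{-\sigma}$. It is relevant only when $H_i\gtrsim \lambda^{-\sigma}$. Such edges correspond to early indices $i\lesssim \lambda^{1-2\sigma}\cdot$polylog in the DLF ordering.

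Concretely, I would split indices into $i\le K_\lambda$ and $i>K_\lambda$ with $K_\lambda=\lambda^{\beta}$ for suitable $\beta$. For $i>K_\lambda$, $H_i$ has tails like $\Pr(H_i>t)\approx e^{-it}$, so with high probability $H_i\le \lambda^{-\sigma}\lambda^{-\epsilon}$ is false only on a negligible event. On the good event the MDSF neighbour coincides with the DLF neighbour up to error (i). For the cone geometry this uses a deterministic lemma: if $H_i\ll \lambda^{-\sigma}\cot$-scale, every cone point within distance $\|Z_i-Z_j\|$ is inside the strip and below-left. Summing (i) gives $\sum_i \lambda^{-2\sigma}\Exp[1/H_i;\,\text{good}]$, which is small using $\sigma>1/2$ and $M_\lambda\approx\lambda^{1-\sigma}$. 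The upper restriction $\sigma<2/3$ enters here, since $\lambda^{-2\sigma}\cdot\lambda^{2(1-\sigma)}\log$-type sums must vanish. For $i\le K_\lambda$, both edge lengths are $O(1)$ in $L^2$ individually, but there are few such indices. Both contributions are still $O(\lambda^{-1/2})$-ish since those edges need only beat the intermediate/bulk points of density $\lambda$. Hence their squared sum is $O(K_\lambda^2\lambda^{-1}\log\lambda)\to0$.

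Case (iii) is handled similarly: $H_i=0$ means $X_i$ is a running minimum. There are only $O(\log M_\lambda)$ such indices, each with MDSF edge of $L^2$-size $O(\lambda^{-1/2}\log\lambda)$, because the point near height $\lambda^{-\sigma}$ sees Poisson points above it.

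The main obstacle is the deterministic-geometry comparison combined with the second-moment bound on the good event. Expected sums alone are not enough; cross terms must be controlled. The first approach I would try is the bound $(\sum_i E_i)^2\le M_\lambda\sum_i E_i^2$ with a sharper per-term bound, or else conditioning on $M_\lambda$ and using exchangeability of the uniform sequence. The squared-sum bound requires per-term errors of order $o(\lambda^{-(1-\sigma)})$ in $L^2$, and verifying this for the $1/H_i$ term is where $\sigma\in(1/2,2/3)$ must be used carefully.
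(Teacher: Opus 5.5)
\emph{Verdict: there is a genuine gap.} Your point-by-point decomposition is the right starting point, but the argument rests on a misreading of the cone.

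\textbf{The cone geometry.} With $\theta=\pi/2$ the permitted directions run anticlockwise from due west through due south. When $\phi>\pi/2$ they continue past south towards, but never reaching, due east. So $\cCtp(Z_i)$ lies in the closed half-plane below $Z_i$, and there are no upper-left directions. Hence $\cCtp(Z_i)\cap[0,1]^2\subseteq\cRbndry$ for every $Z_i\in\cW_\lambda$. The MDSF neighbour of a strip point is always an earlier strip point, and no edge from the strip reaches $\cRint$. This has several consequences for your plan:
\begin{itemize}
\item Your discrepancies ``neighbour above the strip'' in (ii) and ``edge exits the strip'' in (iii) cannot occur.
\item The mechanism you use for early indices and running minima is false. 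You argue that strip points see the density-$\lambda$ points above them, so their edges are $O(\lambda^{-1/2}\log\lambda)$. In fact early strip points have MDSF edges of order $1$, which is exactly where the non-Gaussian term comes from.
\item The early-index step fails even on its own terms: the DLF edges $H_i$ are of order $1/i$, not $O(\lambda^{-1/2})$.
\end{itemize}

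\textbf{The scales are reversed.} For every relevant index $i\le M_\lambda\approx\lambda^{1-\sigma}$, $H_i$ is typically of order $1/i\gtrsim\lambda^{\sigma-1}\gg\lambda^{-\sigma}$, because $\sigma>1/2$. So your good event $\{H_i\le\lambda^{-\sigma-\epsilon}\}$ is rare, not typical. Likewise your cut-off $\lambda^{1-2\sigma}$ for early indices tends to $0$. The refinement $\lambda^{-2\sigma}/H_i$ also cannot be summed in expectation: a spacing has density bounded away from $0$ near $0$, so $\Exp[1/H_i]=\infty$. The crude bound $\|Z_i-Z_j\|-|X_i-X_j|\le\lambda^{-\sigma}$ is what works. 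Finally, $\lambda^{-2\sigma}\lambda^{2(1-\sigma)}\to0$ requires $\sigma>1/2$, not $\sigma<2/3$; the upper restriction is not used in this step.

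\textbf{The missing idea.} You need to isolate exactly when the two neighbours can differ by more than $O(\lambda^{-\sigma})$. Write $\Delta_i:=\cDtp(Z_i,\cP_\lambda)-H_i$ and define the bad event $A_i$ as follows.
\begin{itemize}
\item Obtuse case: $Z_i$ has a DLF neighbour, but its MDSF neighbour lies in the thin triangle $\Eo_i$ of points of $\cCtp(Z_i)\cap\cRbndry$ to the right of $X_i$. This triangle has width $\lambda^{-\sigma}\tan(\phi-\pi/2)$.
\item Acute case: $Z_i$ has an MDSF neighbour, but its DLF neighbour lies in the thin triangle $\Ea_i$ of below-left strip points outside the cone. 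This triangle has width $\lambda^{-\sigma}\cot\phi$.
\end{itemize}

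\textbf{How the proof then goes.}
\begin{itemize}
\item Off $A_i$, minimality of each neighbour and the crude bound give $|\Delta_i|\le C\lambda^{-\sigma}$ deterministically. These terms therefore contribute $O(\lambda^{1-2\sigma})$ in $L^2$.
\item The $i-1$ earlier abscissae are i.i.d.\ uniform, so $\Pr(A_i)\le Ci\lambda^{-\sigma}\le C\lambda^{1-2\sigma}$ for $i\le3\lambda^{1-\sigma}$. This is smallest precisely for early indices, so no split at $K_\lambda$ is needed.
\item On $A_i$, $|\Delta_i|$ is at most the maximal spacing of $X_1,\dots,X_i$ plus $O(\lambda^{-\sigma})$. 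That maximal spacing has fourth moment $O(i^{-4}\log^4 i)$.
\item In the acute case this bound needs one more device. Let $\tJ(i)$ be the rightmost predecessor to the left of $X_i-\lambda^{-\sigma}\cot\phi$. When it exists it lies automatically in the cone, so the MDSF edge is at most $|X_{\tJ(i)}-X_i|+\lambda^{-\sigma}$. Moreover, $X_{\tJ(i)}$ is adjacent in the order statistics to the leftmost predecessor in $[X_i-\lambda^{-\sigma}\cot\phi,X_i)$.
\item Discard $\{M_\lambda>3\lambda^{1-\sigma}\}$ by Poisson concentration. Two applications of Cauchy--Schwarz then bound the cross terms by $\bigl(\sum_{i\le3\lambda^{1-\sigma}}\Exp[(\Delta_i\ind_{A_i})^4]^{1/4}\Pr(A_i)^{1/4}\bigr)^2=O(\lambda^{1/2-\sigma}\log^4\lambda)\to0$.
\end{itemize}
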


Before proving Theorem~\ref{thm:DLF-singly-aligned-approx}, we first write $\cLbndry$ and $\cLdlf{\cV_\lambda}$ as sums of random variables associated with points in $\cV_{\lambda}$. For this we require the following definitions.
\begin{definition}
\label{def:DLF-J}
For $i \in \{1, \dots ,M_\lambda\}$, let $J(i) := \max \{j <i : X_j = \max_{k<i}\{X_k: X_k < X_i\} \}$ denote the largest index of a rightmost predecessor of $i$, arriving to the left of $X_i$ (with the convention that $J(i) := \infty$ if there is none).
 \end{definition}

\begin{definition}
\label{def:DLF-K}
If $\cW_\lambda \cap \cCotp(Z_i) \neq \emptyset$, let $K(i) \in \{1, \dots ,M_\lambda\}$ be the unique index such that $Z_{K(i)} = N_{\theta,\phi}(Z_i; \cW_\lambda)$; otherwise let $K(i) := \infty$.
\end{definition}
Note that $K(i) < i$ whenever $K(i)$ is finite.
Using Definition~\ref{def:DLF-J}, we can write 
\begin{align}\label{L_3region} 
    \cLdlf{\cV_\lambda} = \sum_{i=1}^{M_{\lambda}}|X_i-X_{J(i)}|\ind_{\{J(i) < \infty\}}, 
\end{align}
(set $X_\infty :=0$ for definiteness, and similarly $Z_\infty := \0$) 
and using Definition~\ref{def:DLF-K}, we can write 
\begin{align}
     \cLbndry = \sum_{i=1}^{M_{\lambda}}\|Z_i-Z_{K(i)}\|\ind_{\{K(i) < \infty\}}.\label{def:L3region}
\end{align}
We then decompose $\cLbndry  - \cLdlf{\cV_\lambda} = \sum_{i=1}^{M_\lambda} \Delta_i$ as the sum  of contributions 
\begin{equation}
    \label{eq:Delta-i-def}
\Delta_i :=  \|Z_i -Z_{K(i)}\|\ind_{\{K(i) < \infty\}} - |X_i - X_{J(i)}|\ind_{\{J(i) < \infty\}}.
\end{equation}

Because the region $\cRbndry$ is short and wide, the (2-dimensional) distance $\|Z_i -Z_j\|$ between points with indices $i$ and $j$ is very close to their (1-dimensional) distance $|X_i-X_j|$.  Therefore, if the index set for the candidate neighbours of $Z_i$ in the MDSF is the same as that of $X_i$ in the DLF, then the corresponding lengths in the two constructions will be very close.   A substantial discrepancy in the lengths may arise when some indices are available as nearest neighbours in one construction but not the other.  For the acute cone, there are potential 1-dimensional neighbours that are not available as 2-dimensional neighbours;  for the obtuse cone, it is the other way around. 
%({\bf Figure of the triangles $\Eo_i$})  
However, the events that produce large discrepancies occur with small probability, so the $L^2$ convergence can still be controlled.  Because of the differing geometries for the two cones, the arguments required to prove   Theorem~\ref{thm:DLF-singly-aligned-approx} are slightly different; Section~\ref{sec:coupling-obtuse} covers the obtuse cone and Section~\ref{sec:coupling-acute} covers the acute cone.

\subsection{Coupling for the obtuse case}
\label{sec:coupling-obtuse}

In this section we assume that $\phi \in (\frac{\pi}{2}, \pi)$.
We begin by defining a region $\Eo_i$ of $\cRbndry$ for each point $Z_i$ of $\cW_\lambda$ which plays an important role in controlling the size of $\Delta_{i}$.  Indeed, any Poisson point in $\Eo_i$ is a candidate neighbour of $Z_i$ in the (2-dimensional) MDSF, but does not generate a candidate neighbour of $X_i$ in the associated DLF.

  \begin{definition} \label{Def:proof1.7.7.}
    For each $i = 1,2, \dots, M_{\lambda}$, define
  \begin{align}
      \Eo_{i} := \{(x,y) \in \cRbndry : x> X_i, (x,y) \in \cCtp(Z_i)
      \}. \nonumber
  \end{align}
  \end{definition}

Note that $\Eo_i$ is always contained in a right-angled triangle with height $\lambda^{-\sigma}$ and width $ \lambda^{-\sigma} \tan (\phi - \frac{\pi}{2})$.
For $i \in \N$ we define 
\begin{equation}
    \label{eq:A-i-def}
A_{i} :=\{ i \leq M_\lambda, \, J(i) < \infty, \, K(i) < \infty, \, Z_{K(i)} \in \Eo_i \},
\end{equation}
 and set 
 \begin{equation}
     \label{eq:S-primes-def}
 S_\lambda^{\prime} = \sum_{i=1}^{M_{\lambda}}\Delta_{i}\ind_{A^{\rc}_{i}}, \text{ and } S_\lambda^{\dprime} =  \sum_{i=1}^{M_{\lambda}}\Delta_{i}\ind_{A_{i}}. 
  \end{equation}
 We will show that both $S'_\lambda$ and $S''_\lambda$ converge to 0 in $L^2$.

\begin{lemma}
\label{lem:obtuse-boundary-error-1}
Suppose that $\phi \in (\frac{\pi}{2}, \pi)$ (obtuse case). Let $\sigma \in (\frac{1}{2}, \frac{2}{3})$. Then
\[
    S_\lambda^{\prime} \toL{2} 0,  \text{ as } \lambda \to \infty.\nonumber \]
\end{lemma}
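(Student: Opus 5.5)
The plan is to show that on $A_i^{\rc}$ the discrepancy $\Delta_i$ is deterministically $O(\lambda^{-\sigma})$. Then $|S'_\lambda| \le C\lambda^{-\sigma} M_\lambda$, and the condition $\sigma > \frac12$ makes this small in $L^2$. No control of record counts or of small gaps $H_i$ should be needed.

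\emph{Step 1: cone geometry.} With $\theta = \frac{\pi}{2}$ and $\phi \in (\frac{\pi}{2},\pi)$, the cone $\cCtp(\bx)$ contains the closed quadrant of points weakly to the left of and below $\bx$. It also contains a wedge to the right of the downward vertical, of half-width $\phi - \frac{\pi}{2}$. Since the $Z_j$ are ordered by increasing vertical coordinate, every $j < i$ with $X_j < X_i$ has $Z_j \in \cCotp(Z_i)$. The only other candidates for the MDSF neighbour within $\cW_\lambda$ are the points $Z_j$, $j<i$, lying in $\Eo_i$; ties $X_j = X_i$ have probability zero. Two consequences follow. First, $J(i) < \infty$ implies $K(i) < \infty$. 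Second, if $Z_{K(i)} \notin \Eo_i$, then $X_{K(i)} < X_i$, so $K(i)$ is a candidate index in the DLF.

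\emph{Step 2: casework on $A_i^{\rc}$.} There are three possible cases.

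(a) $K(i) = J(i) = \infty$. Then $\Delta_i = 0$.

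(b) $K(i) < \infty$ and $J(i) = \infty$. Then no earlier point lies to the left, so $Z_{K(i)} \in \Eo_i$. Since $\Eo_i$ sits inside a right triangle with legs $\lambda^{-\sigma}$ and $\lambda^{-\sigma}\tan(\phi-\frac{\pi}{2})$, we get $0 \le \Delta_i \le \lambda^{-\sigma}\sec(\phi - \frac{\pi}{2})$.

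(c) Both are finite and $Z_{K(i)} \notin \Eo_i$. Then $K(i)$ is a DLF candidate and $J(i)$ is an MDSF candidate, so
\[ |X_i - X_{J(i)}| \le |X_i - X_{K(i)}| \le \|Z_i - Z_{K(i)}\| \le \|Z_i - Z_{J(i)}\| \le |X_i - X_{J(i)}| + \lambda^{-\sigma}. \]
The last inequality holds because vertical separations in $\cRbndry$ are at most $\lambda^{-\sigma}$. Hence $0 \le \Delta_i \le \lambda^{-\sigma}$.

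In all cases, $|\Delta_i|\ind_{A_i^{\rc}} \le C_\phi \lambda^{-\sigma}$.

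\emph{Step 3: moment bound.} From Step 2, $|S'_\lambda| \le C_\phi \lambda^{-\sigma} M_\lambda$. Here $M_\lambda$ is Poisson with mean $\lambda^{1-\sigma}$, so $\Exp[M_\lambda^2] = \lambda^{1-\sigma} + \lambda^{2-2\sigma}$. Therefore
\[ \Exp[(S'_\lambda)^2] \le C_\phi^2 \lambda^{-2\sigma}\bigl(\lambda^{1-\sigma} + \lambda^{2-2\sigma}\bigr) = O(\lambda^{2-4\sigma}), \]
which tends to $0$ since $\sigma > \frac12$.

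The only delicate point is Step 1. The bound needs the inclusion of the south-west quadrant in the cone to be checked carefully against the angle convention; this is where $\phi > \frac{\pi}{2}$ is used, and it is what excludes the case $J(i)<\infty$, $K(i)=\infty$. Everything else is a crude but sufficient bound. The finer analysis of the genuinely bad events is deferred to $S''_\lambda$.
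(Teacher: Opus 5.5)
Your proposal is correct and follows the paper's proof essentially step for step. You use the same three-case analysis on $A_i^{\rc}$; the paper states the first case as $K(i)=\infty$ and notes that it forces $J(i)=\infty$, which is your Step 1 observation. You then use the same bounds $\lambda^{-\sigma}\sec(\phi-\frac{\pi}{2})$ (the hypotenuse of the triangle containing $\Eo_i$) and $\lambda^{-\sigma}$ (the chain of inequalities), and the same second-moment estimate $\Exp[(S'_\lambda)^2]=O(\lambda^{2-4\sigma})$ from $M_\lambda\sim\Poi(\lambda^{1-\sigma})$.
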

\begin{proof}
We determine the value of $\Delta_i$ for each $i \in \{ 1, \dots, M_\lambda \}$, as follows.  Given that $A_{i}^{\rc}$ occurs, since $i \leq M_\lambda$ exactly one of the following events 
occurs:
\begin{enumerate}[label={\upshape(\roman*)}, align=left, widest=iii, leftmargin=*]
     \item \label{lem:obtuse-boundary-error-1-i} \{$K(i) = \infty\}$,
     \item \label{lem:obtuse-boundary-error-1-ii} \{$J(i) = \infty, K(i) < \infty\}$,
      \item \label{lem:obtuse-boundary-error-1-iii} \{$J(i) < \infty, K(i) < \infty, Z_{K(i)} \notin \Eo_i\}$.
\end{enumerate}
In all these cases,  geometric arguments imply that $\Delta_i$ is small. Indeed, in Case~\ref{lem:obtuse-boundary-error-1-i}, $K(i) =  \infty$ means $\cCotp(Z_i)$ contains no points of $\cW_{\lambda}$, so $J(i)=  \infty$ also. Then $\Delta_{i}= 0$.
In Case~\ref{lem:obtuse-boundary-error-1-ii}, $J(i) = \infty$ and $K(i) < \infty$, so that 
$\cCotp(Z_i)$ contains $Z_{K(i)}$ the nearest-neighbour of $Z_{i}$ (hence $K(i) < i$) but $X_{K(i)} > X_{i}$ (otherwise $J(i) < \infty$). Hence $ Z_{K(i)} \in \Eo_i$ and 
\begin{equation}
    0  \leq \Delta_i = \|Z_i-Z_{K(i)}\| \leq  \lambda^{-\sigma}\sec\left(\phi - \frac{\pi}{2}\right) \label{eqt:eq},
\end{equation}
since the length $\|Z_i-Z_{K(i)}\|$ is at most the length of the hypotenuse of $\Eo_i$.

In the remaining case, 
Case~\ref{lem:obtuse-boundary-error-1-iii},
$J(i) < \infty$, $K(i) < \infty$, and  
$ Z_{K(i)} \notin \Eo_i$ means $K(i) < i$ and $X_{K(i)}< X_{i}$, hence 
$|X_{J(i)} - X_{i}| \leq |X_{K(i)} - X_{i}|$
by definition of $J(i)$. 
Also, $Z_{J(i)} \in \cCotp(Z_i)$ so 
$\|Z_{K(i)}-Z_i\| \leq \|Z_{J(i)}-Z_i\|$ by definition of $K(i)$. Therefore
\begin{align*}
  |X_{J(i)}-X_i| \leq |X_{K(i)}-X_i| \leq \|Z_{K(i)}-Z_i\| &\leq  \|Z_{J(i)}-Z_i\| \\
  &\leq  |X_{J(i)}-X_i|+|Y_{J(i)}-Y_i|  \\
  &\leq |X_{J(i)}-X_i| + \lambda^{-\sigma}. 
 \end{align*}
Therefore, $\Delta_{i} = \|Z_{K(i)}-Z_i\| - |X_{J(i)}-X_i|$ satisfies $0 \leq \Delta_{i} \leq \lambda^{-\sigma}$.

Combining Cases~\ref{lem:obtuse-boundary-error-1-i},~\ref{lem:obtuse-boundary-error-1-ii}, and~\ref{lem:obtuse-boundary-error-1-iii}, yields 
\begin{align*}
  0 \leq \Delta_i\ind_{A^{\rc}_{i}} \leq  \lambda^{-\sigma}\sec \left(\phi - \frac{\pi}{2}\right),
  \text{ for all } i \in \{1,\ldots, M_\lambda\},
\end{align*}
so that $|S_\lambda^{\prime}| \leq M_{\lambda}\lambda^{-\sigma} \sec\left(\phi - \frac{\pi}{2}\right)$. Thus $\Exp[( S_\lambda^{\prime})^2] \leq  \lambda^{-2\sigma} \sec^2 \left(\phi - \frac{\pi}{2}\right)  \Exp[M_{\lambda}^2]$.
Since  $M_{\lambda} \sim \Poi(\lambda^{1-\sigma})$, we bound $\Exp[M_\lambda^2] = \lambda^{1-\sigma} + \lambda^{2(1-\sigma)} \leq 2\lambda^{2-2\sigma}$, for $\lambda > 1$.  Hence, since $\sigma \in (\frac{1}{2}, \frac{2}{3})$,
\begin{equation*}
     \Exp[( S_\lambda^{\prime})^2] \leq 2\lambda^{2-2\sigma}\cdot\lambda^{-2\sigma} \sec^{2}\left(\phi - \frac{\pi}{2}\right) \to 0,   \text{ as }  \lambda \to \infty . \qedhere
\end{equation*}
\end{proof}

We now turn our attention to the second sum $S_\lambda^{\dprime} = \sum_{i=1}^{M_{\lambda}}\Delta_{i}\ind_{A_{i}}$ from~\eqref{eq:S-primes-def}.
In this case, a large $\Delta_i$ can arise from
a small two-dimensional length and a large one-dimensional length. An $L^2$ bound can be achieved by combining the fact that~$A_i$ is itself unlikely (as $\Eo_i$ is small),
and that the one-dimensional length can also be controlled in terms of \emph{spacings} induced by the $X_{1}, X_{2}, \dots, X_{i}$. We make use of
the concentration of a Poisson random variable around its mean. Specifically,  Lemma $1.2$ from~\cite{penrose-book} (for example) states that if $X  \sim \Poi(\mu)$, $\mu > 0$, then
\begin{align}\label{eq:poisson-3-mu}
    \Pr ( X \geq 3\mu ) \leq \re^{-\mu}.
\end{align}

\begin{lemma}
\label{lem:obtuse-boundary-error-2}
Suppose that $\phi \in (\frac{\pi}{2}, \pi)$ (obtuse case). Let $\sigma \in (\frac{1}{2}, \frac{2}{3})$. Then
\[
    S_\lambda^{\dprime} \toL{2} 0, \text{ as } \lambda \to \infty 
    \]
\end{lemma}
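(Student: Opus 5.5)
On the event $A_i$, the nearest neighbour $Z_{K(i)}$ lies in $\Eo_i$. Hence $0 \le \|Z_i - Z_{K(i)}\| \le \lambda^{-\sigma}\sec(\phi-\frac{\pi}{2})$. The one-dimensional term $|X_i - X_{J(i)}|$ is nonnegative, and it is bounded by the gap between $X_i$ and the nearest point to its left among $X_1,\dots,X_{i-1}$. So
\[
|\Delta_i|\ind_{A_i} \le \lambda^{-\sigma}\sec\bigl(\phi-\tfrac{\pi}{2}\bigr) + G_i\ind_{A_i},
\]
where $G_i := |X_i - X_{J(i)}|\ind_{\{J(i)<\infty\}}$.

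The first term summed over $i$ is handled exactly as in Lemma~\ref{lem:obtuse-boundary-error-1}: its $L^2$ norm is $O(\lambda^{1-2\sigma})\to 0$. It therefore suffices to show $\Exp\bigl[(\sum_{i} G_i\ind_{A_i})^2\bigr]\to 0$. By Cauchy--Schwarz,
\[
\Bigl(\sum_i G_i \ind_{A_i}\Bigr)^2 \le \Bigl(\sum_i \ind_{A_i}\Bigr)\sum_i G_i^2\ind_{A_i}.
\]
Alternatively, one can expand the square and bound the diagonal and off-diagonal terms separately.

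The key inputs are two estimates.

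\emph{Probability of $A_i$.} Conditional on the points, $A_i$ requires that some earlier point lies in $\Eo_i$. That triangle has area $O(\lambda^{-2\sigma})$, so given $Z_i$ the number of Poisson points in it is $\Poi(O(\lambda^{1-2\sigma}))$. Thus $\Pr(A_i \mid Z_i) = O(\lambda^{1-2\sigma})$ uniformly; since $\sigma>\frac12$, this tends to $0$. The Mecke formula for the Poisson process gives $\Exp\sum_i \ind_{A_i} = O(\lambda^{1-\sigma}\cdot\lambda^{1-2\sigma}) = O(\lambda^{2-3\sigma})$, which tends to $0$ because $\sigma>\frac23$ fails. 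Since $\sigma<\frac23$, this quantity actually diverges. So a direct count of the $A_i$ is not enough, and the smallness of the spacings must be exploited as well.

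\emph{Size of the spacing.} By the definition of $J(i)$, $G_i$ is at most the distance from $X_i$ to the nearest point to its left among earlier points. Among $i-1$ uniform points, this is of order $1/i$ in all moments. The plan is to use Mecke: replace $\sum_i$ by $\lambda\int_{\cRbndry}\ud z$, with $z=(x,y)$ an added point. The earlier points are those of $\cP_\lambda$ below height $y$, a Poisson process of intensity $\lambda$ on $[0,1]\times[0,y]$. The event ``a point in the triangle $\Eo(z)$'' depends only on points within horizontal distance $O(\lambda^{-\sigma})$ to the right of $x$. The spacing $G$ depends on points to the left of $x$. These are disjoint regions, so by Poisson independence
\[
\Exp[G^2\ind_{A}] \le \Pr(\text{pt in }\Eo(z))\cdot \Exp[G^2] \lesssim \lambda^{-2\sigma}\cdot \lambda y\cdot (\lambda y)^{-2}.
\]
Here $G$ is approximately exponential with rate $\lambda y$. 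The tail event that there are no points to the left, which makes $G=0$, is harmless. Integrating gives
\[
\Exp\sum_i G_i^2\ind_{A_i} \lesssim \lambda\int_0^1\!\!\int_0^{\lambda^{-\sigma}} \lambda^{1-2\sigma}(\lambda y)^{-1}\wedge(\lambda y\cdot\text{const})\,\ud y\,\ud x.
\]
The small-$y$ cutoff needs care. When $\lambda y\lesssim 1$ there are few earlier points, but $G\le 1$ and $\Pr(A)\lesssim \lambda^{1-2\sigma}\cdot y\lambda^{\sigma}$ is tiny. The result is $O(\lambda^{1-2\sigma}\log\lambda)\to 0$.

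\emph{Cross terms, the main obstacle.} For the square, I would expand $\Exp(\sum_i G_i\ind_{A_i})^2$ into diagonal plus off-diagonal parts, using the second-order Mecke formula for the latter, and seek approximate independence for pairs of points. For two added points $z,z'$ at horizontal separation larger than $\lambda^{-\sigma}$ plus the spacings, the triangle and spacing events are essentially independent. Each factor then contributes $\lambda\int \lambda^{1-2\sigma}(\lambda y)^{-1/2}$-type bounds, which is messy. A cleaner fallback is to truncate: on the event $\{M_\lambda\le 3\lambda^{1-\sigma}\}$, whose complement has probability $\le \re^{-\lambda^{1-\sigma}}$ by~\eqref{eq:poisson-3-mu}, the crude bound $|\Delta_i|\le 2$ kills the complement. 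One would then bound the spacings uniformly by $\max$-spacing-type estimates, with $G_i \le C\log\lambda/(\lambda y_i)$ except on an event of probability $O(\lambda^{-10})$. This reduces the sum to a weighted count of $A_i$'s, whose second moment is handled by the first-order Mecke computation plus a second-order pair computation of the same type. I expect this pairwise control, and keeping the logarithmic losses below the polynomial gain $\lambda^{1-2\sigma}$, to be the delicate part.
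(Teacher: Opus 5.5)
The gap is the off-diagonal part of the second moment, and that is where the whole difficulty of the lemma lies.

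Your reduction is sound. On $A_i$ you bound $|\Delta_i|$ by $\lambda^{-\sigma}\sec(\phi-\frac{\pi}{2})+G_i$, dispose of the first part via $M_\lambda$, and correctly note that counting the events $A_i$ alone cannot work. The expected number of indices with a point in $\Eo_i$ is of order $\lambda^{2-3\sigma}\to\infty$. Your first-order Mecke computation uses that $\Eo(z)$ lies to the right of $x$ while $G$ depends only on points to the left. It does handle the diagonal $\Exp\sum_i G_i^2\ind_{A_i}$; this is even $O(\lambda^{-\sigma})$ once you use $|\Eo(z)|\asymp y^2$.

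What you never bound is $\sum_{i\neq j}\Exp[G_i\ind_{A_i}G_j\ind_{A_j}]$. Both your second-order Mecke plan and your truncate-and-weight fallback stop at ``I expect this to be the delicate part''. The pointwise inequality $(\sum_i\ind_{A_i})\sum_iG_i^2\ind_{A_i}$ does not help by itself, because its expectation is again a pair computation. So, as written, $L^2$ convergence is not established.

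Your fallback can in fact be closed. Take $w(y)=\min\{1,C\log\lambda/(\lambda y)\}$. Pairs with disjoint triangles contribute $O(\lambda^{2-4\sigma}\log^2\lambda)$ by Poisson independence. Pairs whose triangles meet, or where one point lies in the other's triangle, force $|x-x'|\le\lambda^{-\sigma}\tan(\phi-\frac{\pi}{2})$ and contribute $O(\lambda^{1-3\sigma}\log^2\lambda)$. But that computation is precisely the content that had to be supplied.

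The paper avoids pair correlations entirely by working with arrival indices rather than positions. On $\{M_\lambda\le 3\lambda^{1-\sigma}\}$ it applies Hölder term by term:
\[
\Exp\bigl[|\widetilde{\Delta}_i||\widetilde{\Delta}_j|\ind_{A_i\cap A_j}\bigr]\le \Exp[\widetilde{\Delta}_i^4]^{1/4}\,\Exp[\widetilde{\Delta}_j^4]^{1/4}\,\Pr(A_i)^{1/4}\Pr(A_j)^{1/4}.
\]
Here $\Pr(A_i)\le Ci\lambda^{-\sigma}\le C\lambda^{1-2\sigma}$, because each of the $i-1$ earlier horizontal coordinates must land in an interval of length $O(\lambda^{-\sigma})$. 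The maximal-spacing bound gives $\Exp[\widetilde{\Delta}_i^4]^{1/4}=O(\log(i+1)/i+\lambda^{-\sigma})$.

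This bound is very wasteful: only $\sup_i\Pr(A_i)^{1/2}$ survives, rather than anything like a product of probabilities. It still suffices, because $\sum_{i\le 3\lambda^{1-\sigma}}\log(i+1)/i=O(\log^2\lambda)$ is only polylogarithmic while $\lambda^{1/2-\sigma}$ decays polynomially. The total is $O(\lambda^{1/2-\sigma}\log^4\lambda)\to 0$. This termwise Hölder step, with its index-based spacing bounds, is the idea that makes the cross terms trivial and that your proposal lacks.
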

\begin{proof}
We want to show that, as $\lambda \to \infty$,
\begin{align}
    \Exp[( S_\lambda^{\dprime})^2] = \Exp[(S_\lambda^{\dprime}\ind_{\{M_{\lambda} \leq  3\lambda^{1-\sigma}\}})^2] + \Exp[(S_\lambda^{\dprime}\ind_{\{M_{\lambda} > 3\lambda^{1-\sigma}\}})^2]\ \to 0 \label{eqt:eqiii}.
\end{align}
We start by bounding the second term in the sum in~\eqref{eqt:eqiii}.  Observe that   $|S_\lambda^{\dprime}|$ is bounded by $M_{\lambda} \sqrt{2}$, 
since $|\Delta_{i}| \leq \sqrt{2}$. Then by the Cauchy--Schwarz inequality, a bound on the $4$th moments of the Poisson distribution (i.e., if $X \sim \Poi(\mu)$ then $\Exp[X^{4}] 
= \mu^4+6 \mu^3+7 \mu^2+ \mu
\leq 15\mu^{4}$, for $\mu >1$), and the inequality~\eqref{eq:poisson-3-mu}, we get
\begin{align}
    \Exp[(S_\lambda^{\dprime}\ind_{\{M_{\lambda} > 3\lambda^{1-\sigma}\}})^2] 
        \leq 2(\Exp[M_{\lambda}^{4}])^\frac{1}{2}\Pr(M_{\lambda} > 3\lambda^{1-\sigma})^\frac{1}{2} 
        \to 0 ,
\text{ as } \lambda \to \infty \label{myeqt0}.
\end{align}

For the first term in the sum in~\eqref{eqt:eqiii}, write $\tDelta_i = \Delta_i \ind_{A_i }$ (set $\tDelta_i =0$ for $i > M_\lambda$), and note  
\begin{align}
    \Exp[(S_\lambda^{\dprime}\ind_{\{M_{\lambda} \leq 3\lambda^{1-\sigma}\}})^2] \leq \sum_{i,j=1}^{\floor*{3\lambda^{1-\sigma}}} \Exp[|\tDelta_i||\tDelta_j|] 
    = 
    \sum_{i,j=1}^{\floor*{3\lambda^{1-\sigma}}} \Exp[|\tDelta_i||\tDelta_j|\ind_{A_i \cap A_j}] 
    \label{eqt: eqti-1}.
\end{align}
We repeatedly apply the Cauchy--Schwarz inequality to get
\begin{align}
    \Exp[|\tDelta_i||\tDelta_j|\ind_{A_i \cap A_j}] \leq 
\Exp[(\tDelta_{i} \tDelta_j)^{2}]^{\frac{1}{2}}\Pr(A_i \cap A_j)^{\frac{1}{2}}
    \leq 
    \Exp[\tDelta_i^{4}]^{\frac{1}{4}}\Exp[\tDelta_j^{4}]^{\frac{1}{4}}\Pr(A_i )^{\frac{1}{4}} \Pr ( A_j)^{\frac{1}{4}},\nonumber
\end{align}
which yields
\begin{align}
    \Exp[(S_\lambda^{\dprime}\ind_{\{M_{\lambda} \leq 3\lambda^{1-\sigma}\}})^2] \leq \sum_{i,j=1}^{\floor*{3\lambda^{1-\sigma}}}\Exp[\tDelta_i^{4}]^{\frac{1}{4}}\Exp[\tDelta_j^{4}]^{\frac{1}{4}}
    \Pr(A_i )^{\frac{1}{4}} \Pr ( A_j)^{\frac{1}{4}}
. \label{eqt:eqti-2} 
\end{align}
We have the bound
$\Pr(A_i ) \leq \Pr(i \leq M_\lambda, \, W_i > 0)$, where $W_i$ is the number of points of $\cP_\lambda$ in $\Eo_i$. 
Given $i \leq M_\lambda$ and $Z_i = (X_i, Y_i)$,
the points of $\cP_\lambda \cap ([0,1] \times [0,Y_i))$
constitute a binomial point process of $i-1$ independent random points,
whose horizontal coordinates are i.i.d.~$U[0,1]$ random variables.
Each of these points is in $\Eo_i$ only if
its horizontal coordinate is in the interval $[X_i, X_i + c_\phi \lambda^{-\sigma}]$,
where $c_\phi = \tan ( \phi - \frac{\pi}{2} )$.
So
\[ \Pr(A_i ) \leq 1 - (1 - c_\phi \lambda^{-\sigma} )^{i-1} \leq C i \lambda^{-\sigma} \]
for some $C < \infty$ and all $i \in \N$ with $i \leq 3 \lambda^{1-\sigma} \ll \lambda^\sigma$ (recall $\sigma > 1/2$). Hence 
\begin{align}
   \sup_{1 \leq i \leq \lfloor 3\lambda^{1-\sigma} \rfloor} \Pr (A_i ) \leq C \lambda^{1-2\sigma}
   \label{eqt:eqti-6},\ \text{for}\ \sigma > 1/2.
\end{align}

We also need a bound on $|\tDelta_i|$. 
Given $A_i$ occurs,  $Z_{K(i)} \in \Eo_i$, 
and so, similarly to~\eqref{eqt:eq},
\[
    -|X_{J(i)}-X_i| \leq \Delta_{i} \leq - |X_{J(i)}-X_i| + \sec\left(\phi - \frac{\pi}{2}\right) \lambda^{-\sigma} , \text{ on } A_i.
\]
Hence, in terms of $|\tDelta_i| = |\Delta_i| \ind_{A_i}$, we have
\begin{align}
    |\tDelta_i|  \leq |X_{J(i)}-X_i|\ind_{\{   i \leq M_\lambda, \, J(i) < \infty \}} + \sec \left(\phi - \frac{\pi}{2}\right) \lambda^{-\sigma}\nonumber.
\end{align}
Using the fact that  $(a+b)^4 \leq (2\max\{a,b\})^4 = 2^4\max\{a^4,b^4\} \leq 2^4(a^4+b^4)$, we get
\begin{align}
 | \tDelta_{i}| ^{4} 
 & \leq 2^4 \left(|X_{J(i)}-X_i|^{4}\ind_{\{ i \leq M_\lambda, \, J(i) < \infty   \}} + \lambda^{-4\sigma}  \sec^{4}\left(\phi - \frac{\pi}{2}\right)\right) \label{eqt:eqti-5}.
\end{align}

Next we aim to bound the expectation of $|X_{J(i)}-X_i|^{4}\ind_{\{i \leq M_\lambda,\, J(i) < \infty \}}$. 
To this end, it is convenient to extend the sequence $X_1, \ldots, X_{M_\lambda}$ to an infinite sequence $X_1, X_2, \ldots$ of i.i.d.~$\unif{0}{1}$ random variables. For fixed $i \in \N$, denote the order statistics of the random variables $X_1, \dots, X_i$ by $X_{(1)} < X_{(2)} < \dots < X_{(i)}$ and set $X_{(0)} := 0, X_{(i+1)} :=1$. 
The 
successive \emph{spacings} associated with the order statistics are 
\begin{equation}
    \label{eq:spacings-I-def}
I_{i,j} := X_{(j)} - X_{(j-1)}, \text{ for } j \in \{1,\ldots, i+1\}.
\end{equation}
If $i \leq M_\lambda$ and $J(i)<\infty$, then $X_i-X_{J(i)}$ is equal to one of $I_{i,2}, \ldots, I_{i,i}$, so $| X_{J(i)} -X_i | \leq \max_{1 \leq j \leq i} I_{i,j}$. Hence from~\eqref{eqt:eqti-5} we get
\begin{align}
| \tDelta_{i}| ^{4} 
 & \leq 2^4 \left(\max_{1 \leq j \leq i} I^4_{i,j}+ \lambda^{-4\sigma}  \sec^{4}\left(\phi - \frac{\pi}{2}\right)\right)  \label{eqt:eqti-7}.
\end{align}
As a spacing of i.i.d.~$\unif{0}{1}$ variables, $I_{i,j}$
has $\Pr(I_{i,j}>r) =  (1-r)^{i} $, $r \in (0,1)$, for all $j \in \{ 1, \dots,i+1 \}$. 
A union bound yields,  for $r \in (0,1)$,
 \begin{align}
 \label{eq:max-I-bound}
     \Pr \Bigl( \max_{1 \leq j \leq i}I_{i,j} > r \Bigr) = \Pr(\cup_{j=1}^{i}\{I_{i,j} > r\}) \leq i(1-r)^{i} \leq i \re^{-ir},
 \end{align}
and the final bound in fact holds for all $r > 0$. 
 Using the fact that, for any random variable $\xi \in [0,1]$ and any $r>0$, it holds that $\xi^4 \leq r^4 + \1{ \xi > r }$, 
 and choosing $r = \frac{5 \log (i+1)}{i}$,
 we obtain
 \begin{align}\label{eq:max-I-4th-moment}
     \Exp \Bigl[ \max_{1 \leq j \leq i}I_{i,j}^{4} \Bigr] &\leq \left( \frac{5 \log (i+1)}{i}\right)^{4}
     +\Pr\left(\max_{1 \leq j \leq i}I_{i,j} > \frac{5 \log (i+1)}{i} \right) \nonumber \\
    & \leq 
    \left( \frac{5 \log (i+1)}{i}\right)^{4} +\frac{1}{i^{4}} = O(  i^{-4} \log^{4} i ), 
 \end{align}
 by~\eqref{eq:max-I-bound}.
Combining inequalities~\eqref{eqt:eqti-2},~\eqref{eqt:eqti-6},~\eqref{eqt:eqti-7} and~\eqref{eq:max-I-4th-moment} yields, for some $C < \infty$,
\begin{align}
    \Exp[(S_\lambda^{\dprime}\ind_{\{M_{\lambda} 
     \leq 3\lambda^{1-\sigma}\}})^2] 
    &\leq C \lambda^{\frac{1}{2}- \sigma}\Biggl(\sum_{i=1}^{\floor*{3\lambda^{1-\sigma}}}\left( \frac{\log (i+1)}{i} +   \lambda^{-\sigma}  \right)\Biggr)^{2} \nonumber\\
    & \leq  C \lambda^{\frac{1}{2}- \sigma} \left(\log^2 (3\lambda^{1-\sigma})+3\lambda^{1-2\sigma}\right)^{2}  \to 0, 
     \label{myeq2} 
\end{align}
since $\sigma > 1/2$. 
Finally, we combine~\eqref{myeqt0} and~\eqref{myeq2} to yield~\eqref{eqt:eqiii} and complete the proof.
\end{proof}

\begin{proof}[Proof of Theorem~\ref{thm:DLF-singly-aligned-approx}: Obtuse case]
Recall that $\cLbndry - \cLdlf{\cV_\lambda} =   S_\lambda^{\prime} + S_\lambda^{\dprime}$ as defined at~\eqref{eq:S-primes-def}. 
Combining Lemma~\ref{lem:obtuse-boundary-error-1} and Lemma~\ref{lem:obtuse-boundary-error-2} and applying Minkowski’s inequality shows that $S_\lambda^{\prime} + S_\lambda^{\dprime} \toL{2} 0$, as $\lambda \to \infty$, 
provided $\sigma \in (\frac{1}{2}, \frac{2}{3})$. This completes the proof.
\end{proof}

\subsection{Coupling for the acute case}
\label{sec:coupling-acute}

In this section we assume that $\phi \in (0,\pi/2)$.  We follow a similar approach to that employed in Section \ref{sec:coupling-obtuse}, modifying the argument where necessary for the acute cone geometry.  In particular, we begin by defining the region $\Ea_i$ of $\cRbndry$ for each point of $\cW_\lambda$, which plays an important role in controlling the size of $\Delta_{i}$ in this case.  Here, any Poisson point in the region $\Ea_i$ (associated to point $Z_i$) is not a candidate neighbour in the (2-dimensional) MDSF, but does generate a candidate neighbour in the associated DLF.
 
  \begin{definition}
  For each $i = 1,2, \dots, M_{\lambda}$, define
  \begin{align}
      \Ea_i := \{(x,y) \in \cRbndry : x< X_i, y<Y_i, (x,y) \notin \cCtp(Z_i)
      \}. \nonumber
  \end{align}
  \end{definition}
  Note that $\Ea_i$ is always contained in a right-angled triangle with height $\lambda^{-\sigma}$ and width $ \lambda^{-\sigma}\cot\phi$.
Recall the indices $J(i)$ and $K(i)$ defined for each $i\in \{1,\dots, M_\lambda\}$, given in Definitions~\ref{def:DLF-J} and~\ref{def:DLF-K}.  For the acute case, we introduce two further indices, as follows.
  
  \begin{definition}
\label{Def:acuteofX}
For $i \in \{1, \dots ,M_\lambda\}$, let $\tJ (i) := \max \{j <i : X_j = \max_{k<i}\{X_k: X_k < X_i - \lambda^{-\sigma}\cot\phi \} \}$ denote the largest index of a rightmost predecessor of $i$, arriving to the left of $X_i - \lambda^{-\sigma}\cot\phi$ (with the convention that $\tJ(i) := \infty$ if there is none).
\end{definition}

\begin{definition}\label{defacutep}
For $i \in \{1,\dots, M_\lambda\}$, let $I(i) \leq i$ be defined by $I(i) := \min\{ j \leq i : X_j = \min_{k \leq i}\{X_k : X_{i}-\lambda^{-\sigma}\cot \phi \leq X_{k} \leq X_{i} \} \}$.
    \end{definition}
    
Note that, if the interval $[X_i - \lambda^{-\sigma}\cot \phi, X_i)$ contains none of $X_1, \dots, X_{i-1}$, we have both $I(i) = i$ and $\tJ(i) = J(i)$; otherwise, $I(i) < i$ is the smallest index of a leftmost predecessor of $i$, arriving in $[X_i - \lambda^{-\sigma}\cot \phi, X_i)$, and moreover $\tJ(i) \neq J(i)$.  In either case, when $\tJ(i) < \infty$ the points $X_{\tJ(i)}$ and $X_{I(i)}$ are a consecutive pair of order statistics of the random variables $X_{1}, X_{2}, \dots , X_{i}$.
The definitions of $\Ea_i$ and $\tJ(i)$ also imply that if 
$\tJ(i) < \infty$ then $Z_{\tJ(i)} \notin \Ea_i$. 

For $i \in \N$ we define
\begin{equation}
\label{eq:A-i-def-acute}
A_{i} :=\{ i \leq M_\lambda, \, J(i) < \infty, \, K(i) < \infty, \, Z_{J(i)} \in \Ea_i\}.
\end{equation}
Notice here that the definition of $A_{i}$ at~\eqref{eq:A-i-def-acute} refers to the event $Z_{J(i)} \in \Ea_i$, in contrast to our use of $A_i$ in the previous section, defined by~\eqref{eq:A-i-def}, which refers to event $Z_{K(i)} \in \Eo_i$.
With this (re)definition of $A_i$, we recall $S_\lambda^{\prime}$ and $S_\lambda^{\dprime}$ as defined at~\eqref{eq:S-primes-def}.
The next two lemmas show that both $S_\lambda^{\prime}$
and $S_\lambda^{\dprime}$ tend to~$0$ in $L^2$, and are analogous to Lemmas~\ref{lem:obtuse-boundary-error-1} and~\ref{lem:obtuse-boundary-error-2} above.

  \begin{lemma}
  \label{lem:acute-boundary-error-1}
  Suppose that $\phi \in (0,\frac{\pi}{2})$ (acute case).
Let $\sigma \in (\frac{1}{2},\frac{2}{3})$. Then 
\[
    S_\lambda^{\prime} \toL{2} 0,  \text{ as } \lambda \to \infty.  \]
\end{lemma}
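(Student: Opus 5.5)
The plan is to copy the proof of Lemma~\ref{lem:obtuse-boundary-error-1}. We show that on $A_i^{\rc}$ the discrepancy $\Delta_i$ is deterministically of order $\lambda^{-\sigma}$, and then sum over the at most $M_\lambda$ indices. With $\theta=\frac{\pi}{2}$ and $\phi<\frac{\pi}{2}$, the cone $\cCtp(Z_i)$ lies between the leftward horizontal ray and the ray pointing down-left at angle $\phi$ below it. So every point of $\cW_\lambda \cap \cCotp(Z_i)$ has a smaller horizontal coordinate than $X_i$ and, almost surely, a smaller vertical coordinate, hence a smaller index. Conversely, a predecessor $Z_j$ ($j<i$, so $Y_j<Y_i$) with $X_j<X_i$ is either in $\cCtp(Z_i)$ or in $\Ea_i$. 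Ties in coordinates occur with probability zero, so we ignore them.

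Given $i \leq M_\lambda$ and $A_i^{\rc}$, we split into three cases.

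\emph{Case (i): $J(i)=\infty$.} No predecessor lies to the left of $X_i$, so by the remark above $\cCotp(Z_i)$ contains no point of $\cW_\lambda$. Hence $K(i)=\infty$ and $\Delta_i=0$.

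\emph{Case (ii): $J(i)<\infty$ and $K(i)=\infty$.} Here $Z_{J(i)}$ is a predecessor to the left of $X_i$ but not in the cone, so $Z_{J(i)}\in\Ea_i$. Then $\Delta_i=-|X_i-X_{J(i)}|$. Since $\Ea_i$ is contained in a right-angled triangle of width $\lambda^{-\sigma}\cot\phi$ to the left of $X_i$, this gives $|\Delta_i|\leq \lambda^{-\sigma}\cot\phi$.

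\emph{Case (iii): $J(i)<\infty$, $K(i)<\infty$ and $Z_{J(i)}\notin\Ea_i$.} Now $Z_{J(i)}\in\cCotp(Z_i)$. Since $K(i)<i$ and $X_{K(i)}<X_i$, the definitions of $J(i)$ and $K(i)$ give
\[
|X_{J(i)}-X_i|\leq |X_{K(i)}-X_i|\leq \|Z_{K(i)}-Z_i\|\leq \|Z_{J(i)}-Z_i\|\leq |X_{J(i)}-X_i|+\lambda^{-\sigma}.
\]
Hence $0\leq\Delta_i\leq\lambda^{-\sigma}$.

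These three cases, together with $A_i$ itself, exhaust the possibilities. So we get $|\Delta_i\ind_{A_i^{\rc}}|\leq \lambda^{-\sigma}\max\{1,\cot\phi\}$ for all $i\leq M_\lambda$.

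It follows that $|S'_\lambda|\leq M_\lambda\lambda^{-\sigma}\max\{1,\cot\phi\}$. Using $M_\lambda\sim\Poi(\lambda^{1-\sigma})$ and $\Exp[M_\lambda^2]\leq 2\lambda^{2-2\sigma}$ for $\lambda>1$, we obtain
\[
\Exp[(S'_\lambda)^2]\leq 2\max\{1,\cot^2\phi\}\,\lambda^{2-4\sigma}\to 0,
\]
since $\sigma>\frac12$.

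The only step needing care is case (ii), which has no analogue in the obtuse proof. There, a DLF edge exists but no MDSF edge, and one must check that the one-dimensional length is nonetheless forced to be short. The remaining geometric checks are routine, namely that a left predecessor is in either the cone or $\Ea_i$, and that cone points have smaller index.
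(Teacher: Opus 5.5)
Your proof is correct and follows the paper's argument essentially verbatim: the same three-way split of $A_i^{\rc}$, the same bounds ($0$, $\lambda^{-\sigma}\cot\phi$ and $\lambda^{-\sigma}$ on the three cases), and the same conclusion via $\Exp[M_\lambda^2]\leq 2\lambda^{2-2\sigma}$. One small correction to your closing remark: case (ii) does have an analogue in the obtuse proof, namely the case $J(i)=\infty$, $K(i)<\infty$ bounded via $\Eo_i$, with the roles of $J(i)$ and $K(i)$ swapped, as the paper itself notes.
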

\begin{proof}
Given that $A_{i}^{\rc}$ occurs, since $i \leq M_\lambda$ exactly one of the following events 
occurs:
\begin{enumerate}[label={\upshape(\roman*)}, align=left, widest=iii, leftmargin=*]
        \item \label{lem:acute-boundary-error-1-i} \{$J(i) = \infty\}$,
     \item \label{lem:acute-boundary-error-1-ii} \{$J(i) < \infty, \, K(i) = \infty\}$, 
       \item \label{lem:acute-boundary-error-1-iii}
       \{$J(i) < \infty,\, K(i) < \infty,\, Z_{J(i)} \notin \Ea_i\}$.
\end{enumerate}
Note that cases~\ref{lem:acute-boundary-error-1-i},~\ref{lem:acute-boundary-error-1-ii}, and~\ref{lem:acute-boundary-error-1-iii} correspond to the cases~\ref{lem:obtuse-boundary-error-1-i},~\ref{lem:obtuse-boundary-error-1-ii}, and~\ref{lem:obtuse-boundary-error-1-iii} appearing in proof of Lemma~\ref{lem:obtuse-boundary-error-1} but with the roles of $J(i)\ \& \ K(i) $ swapped. 

Indeed, in Case~\ref{lem:acute-boundary-error-1-i}, $J(i) = \infty$ implies that $K(i) = \infty$ also; hence $\Delta_{i} = 0$ as with Case~\ref{lem:obtuse-boundary-error-1-i} in Lemma~\ref{lem:obtuse-boundary-error-1}. 
In Case~\ref{lem:acute-boundary-error-1-ii}, $K(i) = \infty$ and $J(i) < \infty$, so that $\cCotp(Z_i)$ contains no point in $\cP_\lambda$, but in the DLF there is a candidate nearest neighbour ${J(i)}<i$ of $i$ with $X_{J(i)} < X_{i}$. Hence $Z_{J(i)} \in \Ea_i$, and 
\begin{equation}
    0  \leq |\Delta_i | = |X_{J(i)}-X_i| \leq \lambda^{-\sigma}\cot \phi\label{eqt:acu202}.
\end{equation}
In Case~\ref{lem:acute-boundary-error-1-iii}, both $J(i) < \infty$ and $K(i) < \infty$, and $Z_{J(i)} \notin \Ea_i$. Hence the same argument as in Case~\ref{lem:acute-boundary-error-1-iii} of Lemma~\ref{lem:obtuse-boundary-error-1} shows that
$0 \leq \Delta_{i} \leq \lambda^{-\sigma}$.
Combining cases~\ref{lem:acute-boundary-error-1-i},~\ref{lem:acute-boundary-error-1-ii}, and~\ref{lem:acute-boundary-error-1-iii} we see that $0 \leq |\Delta_i | \ind_{A^{\rc}_{i}} \leq \lambda^{-\sigma} \max (1,\cot \phi)$, and the proof is concluded similarly to Lemma~\ref{lem:obtuse-boundary-error-1}.
\end{proof}

We show in the following lemma that $S_\lambda^{\dprime}$ converges in $L^2$ to $0$ as the intensity $\lambda$ tends to infinity. Let $A_{i}=\{J(i) < \infty, K(i) < \infty, Z_{J(i)} \in \Ea_i\}$. We have by definition of $S_\lambda^{\dprime}$ that $  S_\lambda^{\dprime} =  \sum_{i=1}^{M_{\lambda}}\Delta_{i}\ind_{A_{i}}$, where  $\Delta_{i} = \|Z_i-Z_{K(i)}\|\ind_{\{K(i) < \infty\}}  - |X_i-X_{J(i)}|\ind_{\{J(i) < \infty\}}$, and $M_{\lambda} \sim \Poi(\lambda^{1-\sigma})$.

\begin{lemma}\label{lem:acute-boundary-error-2}
  Suppose that $\phi \in (0,\frac{\pi}{2})$ (acute case).
Suppose that  $\sigma \in (\frac{1}{2}, \frac{2}{3})$. Then,
\begin{equation}
    S_\lambda^{\dprime} \toL{2} 0,  \text{ as } \lambda \to \infty. \nonumber
\end{equation}
\end{lemma}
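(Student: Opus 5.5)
We follow the proof of Lemma~\ref{lem:obtuse-boundary-error-2} almost verbatim, with only the bounds on $\Pr(A_i)$ and on $|\tDelta_i|$ adapted to the acute geometry. As there, we split
\[
\Exp[(S_\lambda^{\dprime})^2] = \Exp[(S_\lambda^{\dprime}\ind_{\{M_\lambda \leq 3\lambda^{1-\sigma}\}})^2] + \Exp[(S_\lambda^{\dprime}\ind_{\{M_\lambda > 3\lambda^{1-\sigma}\}})^2].
\]
For the second term, $|\Delta_i| \leq \sqrt{2}$ gives $|S_\lambda^{\dprime}| \leq \sqrt2 M_\lambda$. Then Cauchy--Schwarz, the Poisson fourth-moment bound and~\eqref{eq:poisson-3-mu} show that this term tends to zero exactly as in~\eqref{myeqt0}.

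For the first term, set $\tDelta_i := \Delta_i\ind_{A_i}$ and use the same double Cauchy--Schwarz bound~\eqref{eqt:eqti-2}. This reduces the problem to two estimates, uniform over $i \leq 3\lambda^{1-\sigma}$.

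The first estimate is on $\Pr(A_i)$. On $A_i$, some earlier point lies in $\Ea_i$, so its horizontal coordinate lies in $[X_i-\lambda^{-\sigma}\cot\phi, X_i)$. Conditional on $i\leq M_\lambda$ and $Z_i$, the $i-1$ earlier points have i.i.d.\ uniform horizontal coordinates. Hence
\[
\Pr(A_i) \leq 1-(1-\lambda^{-\sigma}\cot\phi)^{i-1} \leq C i\lambda^{-\sigma} \leq C\lambda^{1-2\sigma},
\]
which is the same as~\eqref{eqt:eqti-6}.

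The second estimate, a fourth-moment bound on $\tDelta_i$, is the step needing new geometry. On $A_i$ we have $K(i)<\infty$, and $|X_i-X_{J(i)}| \leq \lambda^{-\sigma}\cot\phi$ because $Z_{J(i)}\in\Ea_i$. So it suffices to control $\|Z_i-Z_{K(i)}\|$. Here the acute cone means the two-dimensional neighbour may be far away even though the one-dimensional one is close. The tool is the index $\tJ(i)$, and the plan has three parts.
\begin{enumerate}[label=(\roman*)]
\item First we show that on $A_i$ the nearest neighbour satisfies $X_{K(i)} \leq X_i - \lambda^{-\sigma}\cot\phi$, or else $\|Z_{K(i)}-Z_i\| \leq \lambda^{-\sigma}\csc\phi$. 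Any point of $\cCotp(Z_i)\cap\cRbndry$ with horizontal coordinate in $[X_i-\lambda^{-\sigma}\cot\phi,X_i]$ is within that distance.
\item In the first alternative, we argue that $\tJ(i)<\infty$ and $Z_{\tJ(i)}\in\cCotp(Z_i)$. Every earlier point to the left of $X_i-\lambda^{-\sigma}\cot\phi$ lies in the cone, since the height of $\cRbndry$ is $\lambda^{-\sigma}$ and $\theta=\pi/2$. Therefore $\|Z_{K(i)}-Z_i\| \leq \|Z_{\tJ(i)}-Z_i\| \leq |X_i-X_{\tJ(i)}| + \lambda^{-\sigma}$.
\item We bound $|X_i - X_{\tJ(i)}| \leq |X_{I(i)}-X_{\tJ(i)}| + \lambda^{-\sigma}\cot\phi$. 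The first term is a single spacing $I_{i,j}$ of $X_1,\dots,X_i$, because $X_{\tJ(i)}$ and $X_{I(i)}$ are consecutive order statistics.
\end{enumerate}
Combining these gives
\[
|\tDelta_i| \leq \max_{1\leq j\leq i} I_{i,j} + C\lambda^{-\sigma},
\]
the analogue of~\eqref{eqt:eqti-7}. Then~\eqref{eq:max-I-4th-moment} gives $\Exp[\tDelta_i^4]^{1/4} = O(i^{-1}\log i + \lambda^{-\sigma})$.

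Substituting both estimates into~\eqref{eqt:eqti-2} yields the same bound as~\eqref{myeq2}:
\[
C\lambda^{\frac12-\sigma}\bigl(\log^2(3\lambda^{1-\sigma}) + 3\lambda^{1-2\sigma}\bigr)^2 \to 0,
\]
since $\sigma>1/2$. The main obstacle is the geometric verification in (ii) that every earlier point left of $X_i-\lambda^{-\sigma}\cot\phi$ lies in $\cCotp(Z_i)$. This must use $\theta=\pi/2$ and the exact orientation of the acute cone; if it fails, the cutoff defining $\tJ(i)$ would need to be enlarged by a constant factor, which does not affect the bounds.
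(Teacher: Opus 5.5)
Your proposal is correct and follows essentially the same route as the paper. It uses the same splitting and Cauchy--Schwarz reduction, the same bound on $\Pr(A_i)$, and the same use of $\tJ(i)$, $I(i)$ and a single spacing to control the long two-dimensional edge. The paper is slightly more streamlined: it notes $X_{K(i)}\le X_{J(i)}$, so $0\le\Delta_i\le\|Z_{K(i)}-Z_i\|$, and splits on whether $\tJ(i)<\infty$.

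The step you flag as the main obstacle is in fact immediate. The set $\Ea_i$ lies in a right-angled triangle of width $\lambda^{-\sigma}\cot\phi$, so any earlier point to the left of $X_i-\lambda^{-\sigma}\cot\phi$ lies in $\cCtp(Z_i)$, and no enlargement of the cutoff is needed.
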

\begin{proof}
We follow the proof of Lemma~\ref{lem:obtuse-boundary-error-2} where possible but making appropriate changes specific to the acute case where necessary.
We use the partition in~\eqref{eqt:eqiii} once more, and~\eqref{myeqt0} holds as written, showing that 
$\Exp[(S_\lambda^{\dprime}\ind_{\{M_{\lambda} >  3\lambda^{1-\sigma}\}})^2] \to 0$ as $\lambda \to \infty$.
For the other term in~\eqref{eqt:eqiii}, the bound~\eqref{eqt:eqti-2} carries over. Moreover, 
a very similar argument (with a different definition of the quantity $c_\phi$) yields the bound on $\Pr ( A_i)$ (with its new definition) from~\eqref{eqt:eqti-6}.

The main divergence in the proof is in the upper bound on $| \tDelta_i |$, where $\tDelta_i = \Delta_i \2 {A_i}$ is defined in the same way as before. 
For the obtuse case we showed that  $|\Delta_{i}|$ was close to the one-dimensional distance $|X_{J(i)} - X_{i}|$ (see inequality~\eqref{eqt:eqti-5}) and hence could be bounded in terms of the spacings $I_{i,j}$ defined at~\eqref{eq:spacings-I-def}. In the acute case, as we will see, the corresponding argument only yields an upper bound on $\Delta_{i}$ in terms of two-dimensional distance $\|Z_{K(i)}-Z_{i}\|$ and we need an additional argument to bound $\Delta_{i}$ in terms of an appropriate  one-dimensional distance.
Given $J(i) < \infty$ and $K(i) < \infty$, by the definition of $J(i)$, we have $X_{K(i)} \leq X_{J(i)}$, so
\begin{equation}
    |X_{J(i)}-X_{i}| \leq |X_{K(i)}-X_{i}| \leq \|Z_{K(i)}-Z_{i}\|.
\end{equation}
Hence $0 \leq \Delta_{i} \leq \|Z_{K(i)}-Z_{i}\|$.
Note that $\|Z_{K(i)}-Z_{i}\|$ is the long two-dimensional distance. So we need to work out to find the upper bound in terms of one-dimensional distance, and this is provided by the following two statements.

First, if $\tJ(i) = \infty$, then set $\{X_j: X_j < X_i-\lambda^{-\sigma}\cot(\phi), j<i \}$ is empty. But $K(i) < i$ since $ Z_{K(i)} \in \cCtp(Z_i)$ so the inequality $X_{K(i)} \geq X_{i} - \lambda^{-\sigma} \cot \phi$ must hold to avoid a contradiction. Then, by triangle inequality
\begin{align} \label{eqacutrp}
    \|Z_{K(i)}-Z_{i}\| \leq \lambda^{-\sigma} +|X_{K(i)}-X_{i}| \leq  \lambda^{-\sigma} +  \lambda^{-\sigma} \cot(\phi) \leq \lambda^{-\sigma}(1+\cot(\phi)).
\end{align}

Second, if $\tJ(i) < \infty$, we know $Z_{\tJ(i)} \notin \Ea_i$ and so $Z_{\tJ(i)} \in \cCtp(Z_i)$. Then by Definition~\ref{defacutep} of $I(i)$ along with triangle inequality, we have
\begin{align}
    \|Z_{K(i)}-Z_{i}\|\ind_{\{\tJ(i) < \infty\}} & \leq \|Z_{\tJ(i)}-Z_{i}\|\ind_{\{\tJ(i) < \infty\}} \nonumber \\
& \leq 
\left(|X_{\tJ(i)}-X_{i}|+\lambda^{-\sigma}\right) \ind_{\{\tJ(i) < \infty\}} \nonumber \\
& \leq 
\left(
     |X_{\tJ(i)}-X_{I(i)}|+ |X_{I(i)}-X_{i}|+ \lambda^{-\sigma}
     \right)\ind_{\{\tJ(i) < \infty\}} \nonumber \\
   & \leq \left(|X_{\tJ(i)}-X_{I(i)}| + \lambda^{-\sigma} (1+ \cot \phi )\right)\ind_{\{\tJ(i) < \infty\}}. \nonumber
\end{align}
In terms of $\tDelta_{i} = \Delta_{i}\ind_{A_{i}} $, the upper bound of $\tDelta_{i}$ is given by the following inequality
\begin{align} \label{deltaacute}
    0 \leq \tDelta_{i} \leq |X_{\tJ(i)}-X_{I(i)}|\ind_{\{\tJ(i) < \infty\}} + \lambda^{-\sigma} (1+ \cot(\phi)) \leq \max_{1 \leq j \leq i}I_{i,j}+\lambda^{-\sigma} (1+ \cot(\phi)).
\end{align}
For the final inequality, we used that, for $i \leq M_\lambda$, the variables $X_{\tJ(i)}$, $X_{I(i)}$ are a consecutive pair of order statistics and therefore their absolute difference is equal to $I_{i,j}$ for some $j \in \{2, \dots, i\}$.  Taking 4th powers,
 \begin{equation}\label{4thmmde}
   |  \tDelta_{i}^{4} | \leq 2^{4} \left( \max_{1 \leq j \leq i}I_{i,j}^{4}+ \lambda^{-4\sigma} (1+ \cot \phi)^{4} \right). 
 \end{equation}
Now the proof follows that of Lemma~\ref{lem:obtuse-boundary-error-2} but with~\eqref{4thmmde} in place of~\eqref{eqt:eqti-7}.
 \end{proof}

The proof of Theorem~\ref{thm:DLF-singly-aligned-approx} in the acute case now follows exactly as for the obtuse case.

\begin{proof}[Proof of Theorem~\ref{thm:DLF-singly-aligned-approx}: Acute case]
Recall from~\eqref{eq:S-primes-def} that $\cLbndry - \cLdlf{\cV_\lambda} =   S_\lambda^{\prime} + S_\lambda^{\dprime}$. 
Combining Lemma~\ref{lem:acute-boundary-error-1} and Lemma~\ref{lem:acute-boundary-error-2} and applying Minkowski’s inequality completes the proof.
\end{proof}

\subsection{Convergence to the boundary limit}
\label{sec:boundary-effects-proof}

In this section, we derive the proof of Theorem~\ref{thm:boundary-effects} for both singly-aligned cones since the proof is identical for both obtuse case and acute case. Theorem~\ref{thm:boundary-effects} plays a significant role in our analysis in which the approximation process to one-dimensional distance holds due to the boundary effect. These boundary effects can be characterized by the fixed point equation given by~\eqref{eq:fixed-point}. Recall that Theorem~\ref{thm:boundary-effects} states that the total edge length ${\tcLbndry}$ converges in distribution to $Q$, where the distribution of $Q \sim \cQ$ given by the fixed point equation~\eqref{eq:fixed-point} as the number of points tends to infinity. 

\begin{proof}[Proof of Theorem~\ref{thm:boundary-effects}]
Recall that, $\cLdlf{\cU_n}$ is the total length of the DLF defined on $n$ independent $\unif{0}{1}$ random points. Define $q_n:= \Exp \cLdlf{\cU_n}$ (and $q_0:=0$).
It is shown in (3.17) of~\cite{pw-south-west} that,
with $\gamma$ denoting Euler’s constant~\cite[p.~255]{as64},
\begin{equation}
    q_n = \log (1+n) +c + \delta_n, \text{ where }  c := \gamma - 2 , 
\label{eqt:eqt1.0}
\end{equation}
and $\delta_n$ is such that $\sup_{n \in \ZP} | \delta_n | < \infty$ and
$\lim_{n\to\infty} \delta_n =0$.

Conditional on $M_\lambda$ (the size of $\cV_\lambda$), the
sequence $\cV_\lambda$ has the same distribution as $\cU_{M_\lambda}$. In particular,
$\Exp [ \cLdlf{\cV_\lambda} \mid M_\lambda ] = q_{M_\lambda} = \sum_{n=0}^\infty q_n \ind_{\{M_\lambda = n\}}$, and $\Exp \cLdlf{\cV_\lambda} = \Exp  q_{M_\lambda} $.
Write 
\begin{align}
    {\tcLbndry} &= (\cLbndry -\cLdlf{\cV_\lambda} ) + (\cLdlf{\cV_\lambda} - q_{M_{\lambda}}) + (q_{M_{\lambda}} - \Exp q_{M_{\lambda}} ) +(\Exp q_{M_{\lambda}}  - \Exp \cLbndry ). \label{eq:finalstage}
\end{align}
The first term on the right-hand side of~\eqref{eq:finalstage} tends to $0$ in $L^2$ by Theorem~\ref{thm:DLF-singly-aligned-approx}, and the final term also tends to $0$ since the $L^2$ convergence in Theorem~\ref{thm:DLF-singly-aligned-approx} implies that $ \Exp  \cLbndry - \Exp q_{M_\lambda}  \to 0$.
For the second term, since $M_{\lambda} \to \infty$ in probability, as $\lambda \to \infty$,
it follows from Theorem~\ref{thm:PW-DLF}
that 
$\cLdlf{\cV_\lambda} - q_{M_{\lambda}}$
converges in distribution to $Q$, as $\lambda \to \infty$, where $Q \sim \cQ$ given by equation~\eqref{eq:fixed-point}. It remains to consider the third term on the right-hand side of~\eqref{eq:finalstage}; we claim that $q_{M_{\lambda}} - \Exp q_{M_{\lambda}} \to 0$ in probability.

From~\eqref{eqt:eqt1.0}, we have
\begin{align*}
    q_{M_{\lambda}} - \Exp q_{M_{\lambda}}  & = \log (1+M_\lambda) -\Exp \log (1+M_\lambda) + \delta_{M_\lambda} - \Exp \delta_{M_\lambda}.
\end{align*}
Here, since $M_\lambda \to \infty$, in probability, as $\lambda \to \infty$ we have
$\delta_{M_\lambda} \to 0$, in probability, and (by bounded convergence)
$\Exp \delta_{M_\lambda} \to 0$ as $\lambda \to \infty$.
It is not hard to show, using Poisson concentration, that $ \log (1+M_\lambda) -\Exp \log (1+M_\lambda) \to 0$, in probability, as $\lambda \to \infty$. Hence on the right-hand side of~\eqref{eq:finalstage}, the first, third, and fourth terms all tend to~$0$ in probability (say), while the second converges in distribution to $Q$. 
Thus the result follows from Slutsky's theorem.
\end{proof}

\section{Asymptotic independence}
\label{sec:asymptotic-independence}

\subsection{Intermediate region for singly-aligned cones} 
\label{sec:intermediate-region}

Recall
by  Definition~\ref{def:3-regions} for $\sigma \in (\frac{1}{2},\frac{2}{3})$ that $\cRint = [0,1] \times [\lambda^{-\sigma}, RS]$, for $R$ given in Lemmas~\ref{obtuse'lemma} and~\ref{acute'lemma} for singly-aligned cones; also recall  $\cRbndry = [0,1 ] \times [0,\lambda^{-\sigma}]$.  
Recall by Definition~\ref{allregions} that $\cLint = \sum_{\bx \in \cP_\lambda \cap \cRint} \cDtp(\bx,\cP_\lambda)$, where $ \cDtp(\bx, \cP_\lambda)$ is the distance from point $\bx$ to its nearest neighbour in $\cP_\lambda \cap \cCtp(\bx)$, and ${\tcLint} = \cLint - \Exp[\cLint]$. %is centered random variable.
The following result shows that edges from the intermediate region are negligible in the limit.
The approach of this section is similar to that from~\cite[\S 7]{pw-south-west}.

\begin{theorem} \label{thm:proofa}
Suppose that $(\theta,\phi)$ is singly-aligned. 
Then, $\text{as}\ \lambda \to \infty$,
\begin{equation}
    \tcLint \toP 0 \label{eqn:a}.
\end{equation}
\end{theorem}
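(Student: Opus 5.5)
Since convergence in $L^2$ implies convergence in probability, I will show $\Var(\cLint)\to 0$. The plan is to bound $\Exp[\cLint]$-type second moments using the Mecke formula together with the stabilization tail bounds already established, and to exploit the fact that $\cRint$ has area only $O(RS)=O(\sqrt{\log\lambda/\lambda})$.

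\textbf{Step 1: bound individual edge lengths.} For $\bx\in\cRint$, so $x_2\ge\lambda^{-\sigma}$, I will bound the tail of $\lambda^{1/2}\cDtp(\bx,\cP_\lambda)$ via the area argument of \eqref{eq:d1}.

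In the obtuse case, the sector of $\cCtp(\bx)$ pointing downward-right or downward-left has opening $\phi-\frac{\pi}{2}$. It stays inside $[0,1]^2$ up to radius $x_2\ge\lambda^{-\sigma}$, exactly as in Lemma~\ref{lem:A-lower-bound}\ref{lem:A-lower-bound-i} with $\alpha_\lambda$ replaced by $\lambda^{-\sigma}$. In the acute case the bound is in terms of $\ell(\bx)$ instead. Either way,
\[
\Pr\bigl(\lambda^{1/2}\cDtp(\bx,\cP_\lambda)>r\bigr)\le \exp\{-b(r^2\wedge \lambda^{1-2\sigma})\},
\]
with $\cDtp\le\sqrt2$ deterministically. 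Since $\lambda^{1-2\sigma}\to0$, this bound is useless at large $r$. I therefore expect edges from $\cRint$ of macroscopic length with non-negligible probability, and this is the main obstacle.

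\textbf{Step 2: the crude second-moment bound and why it fails.} The direct bound $\Exp[(\cLint)^2]\le \Exp[\#(\cP_\lambda\cap\cRint)^2]\cdot 2$ is of order $\lambda\cdot\log\lambda$, which is far too big. I will instead use the variance in Mecke form:
\[
\Var\cLint=\lambda\int_{\cRint}\Exp[\cDtp(\bx,\cP_\lambda\cup\{\bx\})^2]\,\ud\bx+\lambda^2\iint \text{(covariance-type terms)}.
\]
The diagonal term is at most $\lambda|\cRint|\sup_{\bx}\Exp[\cDtp^2]$.

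\textbf{Step 3: split $\cRint$ in two.} I will split $\cRint$ into
\[
\cR^{(1)}:=[0,1]\times[\lambda^{-\sigma},\lambda^{-1/2}(\log\lambda)^{-1}]
\quad\text{and}\quad
\cR^{(2)}:=[0,1]\times[\lambda^{-1/2}(\log\lambda)^{-1},RS].
\]

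On $\cR^{(2)}$, Step 1 with $x_2\ge\lambda^{-1/2}/\log\lambda$ gives only a weak tail, $\exp\{-b(r^2\wedge(\log\lambda)^{-2})\}$. So I instead compare with the one-dimensional picture. The edge of a point at height $x_2$ is at most roughly the horizontal spacing among the $O(\lambda x_2)$ points below it, of order $1/(\lambda x_2)$. This makes the expected contribution of a strip $\int \lambda\cdot\min(1/(\lambda y),\lambda^{-1/2})\,\ud y$ of order $\log$-small after centring.

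I will use the stabilization and local dependence of Proposition~\ref{prop:geom} only for points with $x_2\gtrsim\sqrt{\log\lambda/\lambda}$. For the thin layer I will follow \cite[\S 7]{pw-south-west}: bound the variance of the strip at heights $[a,2a]$ by a dyadic decomposition in $a$. In each dyadic layer, points are essentially independent DLF-type contributions whose variance is $O(1)$ per unit of $\log$. Layers of height between $\lambda^{-\sigma}$ and $\lambda^{-1/2}$ correspond to a range $\log(\lambda^{\sigma-1/2})$ of scales, which I expect to show gives $o(1)$ variance after accounting for the DLF variance structure. Here the fixed-point analysis has total variance $O(1)$ concentrated at the coarsest scales, which belong to $\cRbndry$. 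This is the step I expect to be hardest.

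\textbf{Step 4: conclude.} Combining the layer variance bounds with Chebyshev's inequality gives $\tcLint\toP0$.
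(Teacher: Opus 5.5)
There is a genuine gap: the proposal never actually bounds $\Var\cLint$. The covariance terms in Step~2 are left unspecified, and the decisive estimate for the thin layer $\cR^{(1)}$ in Step~3 is only announced (``which I expect to show'').

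The heuristic you give for that estimate is also inconsistent. If each dyadic layer contributed $O(1)$ variance per unit of $\log$-scale, the $(\sigma-\frac12)\log\lambda$ units between heights $\lambda^{-\sigma}$ and $\lambda^{-1/2}$ would give variance of order $\log\lambda$, not $o(1)$. It is the \emph{mean} of the DLF length that grows like $\log n$; its centred fluctuations are $O(1)$ (they converge to $Q$). So you would need two things you do not supply: a quantitative decay of the layer variances, and control of the correlations between layers, since edges from every layer depend on all points below it.

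The diagnosis in Step~1 is also wrong. Edges from $\cRint$ are not macroscopic with non-negligible probability. The sector bound of Lemma~\ref{lem:A-lower-bound} is weak here only because it ignores the strip $\cRbndry$ lying underneath every point of $\cRint$.

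That observation is the missing idea, and the paper, following \cite[\S 7]{pw-south-west}, uses it to make the proof elementary. For $\bx\in\cRint$, every point of $\cRbndry$ to the left of $\bx$ (obtuse case), or to the left by more than $RS\cot\phi$ (acute case), lies in $\cCtp(\bx)$. Cut $\cRint$ and $\cRbndry$ into columns $\Gamma_i$ and $\beta_i$ of width $1/\ell_\lambda$, where $\ell_\lambda=\lfloor\lambda^{1-\sigma}/\log^2\lambda\rfloor$. Then $RS=o(1/\ell_\lambda)$. Each $\beta_i$ has at least $\log^2\lambda$ expected Poisson points, so it is empty with probability at most $\re^{-\log^2\lambda}$.

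Let $\tau_{i-2}$ be the event that $\beta_{i-2}$ is occupied. On $\tau_{i-2}$, every point of $\Gamma_i$ has its directed nearest neighbour within columns $i-3,\dots,i+1$. Hence, writing $X_i$ for the contribution of $\cP_\lambda\cap\Gamma_i$, we get $X_i\ind_{\tau_{i-2}}\le \frac{5}{\ell_\lambda}|\cP_\lambda\cap\Gamma_i|$, and the variables $X_i\ind_{\tau_{i-2}}$ are $4$-dependent. A crude count then gives $\Var\bigl(\sum_i X_i\ind_{\tau_{i-2}}\bigr)\le 9\ell_\lambda\max_i\Var(X_i\ind_{\tau_{i-2}})=O(\lambda^{3\sigma-2}\log^7\lambda)\to0$. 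This is exactly where $\sigma<\frac23$ is used. The terms on $\tau_{i-2}^{\rc}$ are handled by the bound $\cDtp\le\sqrt2$ together with the $\re^{-\log^2\lambda}$ probability. No Mecke formula, dyadic decomposition, or DLF comparison is needed.
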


We partition $\cRint$ region into $\ell_{\lambda}:=\floor*{ \frac{\lambda^{1-\sigma}}{\log^{2} \lambda}}$ cells $\Gamma_i, i= 1, \dots, \ell_{\lambda}$ (from left to right) of height $RS-\lambda^{-\sigma}$ and width ${1}/{\ell_{\lambda}}$. 
Similarly, partition $\cRbndry$ region into cells $\beta_i$, $i = 1, \dots, \ell_{\lambda}$ (also from left to right) of height $\lambda^{-\sigma}$ and width $1/{\ell_{\lambda}}$.
For each $i = 1, \dots,\ell_{\lambda}$ define $X_i := \sum_{\bx \in \cP_\lambda \cap \Gamma_i} \cDtp(\bx,\cP_\lambda)$ and define $\tau_i := \{|\cP_\lambda \cap \beta_i| >0 \}$.

\begin{lemma} \label{lem2022}
Let $i,j \in \{ 1, \dots, \ell_{\lambda} \}$ with  $|i-j| > 4$. 
For all $\lambda$ large enough, the 
random variables $\ind_{\tau_{i-2}}X_i$ and  $\ind_{\tau_{j-2}}X_j$ are independent (with the convention that $\tau_{-1} = \tau_{0} = \Omega$).
\end{lemma}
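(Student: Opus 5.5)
Each variable $\ind_{\tau_{i-2}}X_i$ will be shown to be measurable with respect to the restriction of $\cP_\lambda$ to a vertical strip of the form $W_i := [(i-3)/\ell_\lambda, (i+2)/\ell_\lambda] \times [0,1]$, intersected with $[0,1]^2$. If $|i-j|>4$, the strips $W_i$ and $W_j$ overlap in at most a set of Lebesgue measure zero. The independence property of the Poisson process then gives the claim.

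The event $\tau_{i-2}$ depends only on $\cP_\lambda \cap \beta_{i-2}$, and $\beta_{i-2} \subseteq W_i$. The main task is $X_i$, and here the indicator $\ind_{\tau_{i-2}}$ is what matters. Take $\bx = (x_1,x_2) \in \Gamma_i$, so $x_2 \in [\lambda^{-\sigma}, RS]$. Since $\theta = \pi/2$, the cone $\cCtp(\bx)$ contains the leftward horizontal ray and its clockwise rotations by angles up to $\phi$. On $\tau_{i-2}$ there is a point $\by \in \cP_\lambda \cap \beta_{i-2}$. I will check that such a $\by$ lies in $\cCtp(\bx)$, or at least bound $\cDtp(\bx,\cP_\lambda)$.

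\textbf{Obtuse case.} The cone contains the whole lower-left quadrant. Since $y_1 < x_1$ and $y_2 \le \lambda^{-\sigma} \le x_2$, we get $\by \in \cCtp(\bx)$. Also $\|\bx-\by\| \le 2/\ell_\lambda + RS$.

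\textbf{Acute case.} The point $\by$ must satisfy $(x_2-y_2) \le (x_1-y_1)\tan\phi$. The horizontal separation is at least $1/\ell_\lambda$, and the vertical separation is at most $RS$. Now $1/\ell_\lambda \approx \lambda^{\sigma-1}\log^2\lambda$, while $RS \asymp \sqrt{\log\lambda/\lambda}$. Since $\sigma > 1/2$, we have $RS = o(1/\ell_\lambda)$. So for $\lambda$ large, $RS \le \tan\phi/\ell_\lambda$ and $\by \in \cCtp(\bx)$. This is the only place where ``$\lambda$ large enough'' enters, and it is the main point to verify carefully.

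In both cases, on $\tau_{i-2}$ we get $\cDtp(\bx,\cP_\lambda) \le 2/\ell_\lambda + RS < 3/\ell_\lambda$. Any point closer than this lies horizontally within $3/\ell_\lambda$ of $\bx$, hence inside $W_i$. This uses $x_1 \in [(i-1)/\ell_\lambda, i/\ell_\lambda]$. I will shrink the margins or enlarge the strip by one cell if necessary. The hypothesis $|i-j|>4$ leaves room for this, since the strips have width $5/\ell_\lambda$ and centres $|i-j|/\ell_\lambda \ge 5/\ell_\lambda$ apart.

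It follows that on $\tau_{i-2}$, $X_i$ equals $\sum_{\bx \in \cP_\lambda\cap\Gamma_i} \cDtp(\bx, \cP_\lambda \cap W_i)$, by the same stabilization argument as in Corollary~\ref{def:locall-finite-set}. Hence $\ind_{\tau_{i-2}}X_i$ is a measurable function of $\cP_\lambda \cap W_i$. The same holds for $j$ with $W_j$.

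For the edge cases $i \in \{1,2\}$, where $\tau_{i-2} = \Omega$, one must check that $X_i$ still depends only on a strip near the left edge. For $\bx \in \Gamma_1 \cup \Gamma_2$, every candidate neighbour lies to the left of $\bx$ or below-left within the cone. With $x_1 \le 2/\ell_\lambda$, the acute case gives $\|\bx - \by\| \le x_1\sec\phi$. In the obtuse case, points to the right of $\bx$ are also allowed, but only within height $x_2 \le RS$, so their horizontal distance is at most $RS\tan(\phi-\pi/2) = o(1/\ell_\lambda)$. Thus the whole cone within $[0,1]^2$ lies horizontally within $3/\ell_\lambda$ of $\bx$, and measurability with respect to $W_i$ follows as before.

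Finally, disjointness of $W_i$ and $W_j$ up to null sets, together with the independence of Poisson restrictions to disjoint sets, gives the result.
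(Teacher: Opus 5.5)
Your overall plan matches the paper's: show that $\ind_{\tau_{i-2}}X_i$ is a function of the Poisson points in a block of five consecutive columns, then use disjointness of the blocks. The problem is the key geometric step, which fails as written, and your block $W_i$ sits one column too far to the right.

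\textbf{The distance bound and the inclusion in $W_i$.} For $\bx\in\Gamma_i$ and $\by\in\beta_{i-2}$ the horizontal gap $x_1-y_1$ ranges over $[1/\ell_\lambda,3/\ell_\lambda]$. So $\|\bx-\by\|\le 2/\ell_\lambda+RS$ is wrong; the correct uniform bound is about $3/\ell_\lambda+RS$. Even granting your bound, a point within horizontal distance $3/\ell_\lambda$ to the left of $\bx$ (with $x_1$ near $(i-1)/\ell_\lambda$) can lie as far left as $(i-4)/\ell_\lambda$. That is outside $W_i=[(i-3)/\ell_\lambda,(i+2)/\ell_\lambda]\times[0,1]$.

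\textbf{The claim itself is false.} Take $\bx$ near the upper-left corner of $\Gamma_i$ and a point $\by$ of $\cP_\lambda$ near the lower-left corner of $\beta_{i-2}$. Take also a point $\bz\in\Gamma_{i-3}$ just left of the line $x=(i-3)/\ell_\lambda$, at height close to $x_2$. Finally, assume no other points of $\cP_\lambda$ in $\cCtp(\bx)$ lie within distance $\|\bz-\bx\|$ of $\bx$. The distance $\|\bx-\by\|$ carries a vertical offset of order $RS$, while the two horizontal gaps differ only slightly, so $\bz$ is the nearest neighbour of $\bx$. This configuration has positive probability, so $\ind_{\tau_{i-2}}X_i$ genuinely depends on $\cP_\lambda\cap\Gamma_{i-3}$, which lies outside $W_i$.

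Your fallback of enlarging the strip by one column does not rescue this. Six-column blocks whose indices differ by $5$ share a column, so the hypothesis $|i-j|>4$ does not leave room.

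\textbf{The missing idea} is to bound the leftward reach relative to the witness $\by$, not relative to $\bx$ through a uniform bound on $\cDtp(\bx,\cP_\lambda)$. If $\bw\in\cCtp(\bx)$ is no farther from $\bx$ than $\by$, then
\[
x_1-w_1\le\|\bw-\bx\|\le\|\by-\bx\|\le(x_1-y_1)+(x_2-y_2)\le(x_1-y_1)+RS .
\]
Hence $w_1\ge y_1-RS\ge (i-3)/\ell_\lambda-RS>(i-4)/\ell_\lambda$ for large $\lambda$, because $RS=o(1/\ell_\lambda)$. This is the paper's comparison $|x_3-x_1|+1/\ell_\lambda>|x_3-x_1|+|y_3-y_1|$.

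Combine this with three facts: the rightward reach $RS\tan(\phi-\frac{\pi}{2})<1/\ell_\lambda$ in the obtuse case, no rightward reach in the acute case, and the cone lying below $\bx$. Together they place the neighbour in $\bar A_i=\bigcup_{\max(1,i-3)\le k\le\min(\ell_\lambda,i+1)}(\Gamma_k\cup\beta_k)$, i.e.\ columns $i-3,\dots,i+1$. This is exactly the paper's set, and these blocks are disjoint whenever $|i-j|\ge 5$.

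The same leftward shift repairs your small-$i$ cases. The set $\bar A_i$ reaches column $1$ for every $i\le 4$. Your $W_4$ omits column $1$, although on $\tau_2$ the neighbour of a point of $\Gamma_4$ can lie there.
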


\begin{proof}
Define $\bar{A}_i := \cup_{\max(1,i-3)\leq \ell \leq \min(\ell_\lambda,i+1)}(\Gamma_{\ell} \cup \beta_{\ell})$.
First we show that given $\tau_{i-2}$, the nearest-neighbour of any point in $\cP_\lambda \cap \Gamma_i$ must lie in $\bar{A}_i$ (for sufficiently large $\lambda$). Let $z_1 \in \cP_{\lambda} \cap  \Gamma_i$. For all $i= 1, \dots, \ell_{\lambda}$, the nearest neighbour of $z_{1}$ can not be in  $\cup_{j> i+1}(\Gamma_{j} \cup \beta_{j})$. 
This is immediate for the acute cone, for the obtuse cone we need to check that for $z_2 \in \cCtp(z_1) $ with $x_{2} > x_{1}$ then $z_{2} \in \bar{A}_{i}$. Hence,
\begin{align}\label{disjiont}
    x_{2} - x_{1} < RS \tan\left(\phi - \frac{\pi}{2}\right) < \frac{1}{\ell_{\lambda}},
\end{align}
which holds for large enough $\lambda$, since $S \leq \sqrt{2c\log \lambda/ \lambda}$ for sufficiently large $\lambda$.

Hence for $i=1,2,3,4$, the nearest-neighbour of $z_{1}$ can only be found in $\bar{A}_i$. For $i\geq 5$, given that $\tau_{i-2}$ occurs, take $z_{3} \in \cP_{\lambda} \cap \beta_{i-2}$.
Note that $z_{3}$ is in $\cCtp(z_1)$ for large enough $\lambda$, and hence a candidate nearest neighbour for $z_1$. This is immediate for the obtuse case (since $x_{3} < x_{1}$ and $y_{3}< y_{1}$), and for the acute case, we observe that 
    $x_{3} < x_{1} - 1/{\ell_{\lambda}} < x_{1} - RS \cot\phi$, 
similarly to \eqref{disjiont}, which together with $y_{3} < y_{1}$ implies $z_3 \in \cCtp(z_1)$.

Now suppose $z_2 \in \cCtp(z_1)$ but not in $\bar{A}_i$.
Therefore $z_2 \in \cup_{j<i-3} (\Gamma_j \cup \beta_j)$, and we must have $x_2 < x_3 - \frac{1}{\ell_{\lambda}}$.
We show that the Euclidean distance between points $z_1$ and $z_3 \in \beta_{i-2}$ is smaller than the distance between $z_1 \in \Gamma_i$ and $z_2 \notin \bar{A}_i$. Since $S=o(1/{\ell_{\lambda}})$, for large enough $\lambda$, 
\begin{align}
    \|z_2-z_1\| \geq |x_2-x_1|> |x_3-x_1| + \frac{1}{\ell_{\lambda}} >  |x_3-x_1|+ |y_3-y_1|\geq \|z_3 -z_1\| \nonumber,
\end{align}
hence $z_2$ is not a nearest neighbour of $z_1$.

We deduce that the random variable $\ind_{\tau_{i-2}}X_i$ is determined by $\cP_\lambda \cap \bar{A}_i$. Indeed, either $\tau_{i-2}$ does not occur (not possible for $i = 1$ or $2$) meaning 
$\beta_{i-2}$ has no Poisson point and
$\ind_{\tau_{i-2}}X_i \equiv 0$; otherwise $\tau_{i-2}$ occurs and  $X_i = \sum_{\bx \in \cP_\lambda \cap \Gamma_i} \cDtp(\bx,\cP_\lambda)$  is the sum of nearest neighbour distances, where each distance $\cDtp(\bx,\cP_\lambda)$ is determined by  $\cP_\lambda \cap \bar{A}_i$. 

Finally, for $|i - j| > 4$, the regions $\bar{A}_i$ and $\bar{A}_j$ are disjoint, so the random variables $\ind_{\tau_{i-2}}X_i$ and $\ind_{\tau_{j-2}}X_j$ depend on disjoint regions of Poisson process $\cP_{\lambda}$, so are independent.
\end{proof}

\begin{proof}[Proof of Theorem~\ref{thm:proofa}.]
It is enough to show that, as $\lambda \to \infty$,
$\Var (\cLint) \to 0$.
Recalling that
$X_i = \sum_{\bx \in \cP_\lambda \cap \Gamma_i} \cDtp(\bx,\cP_\lambda)$,
$ i =1, \dots, \ell_{\lambda}$, we can write 
\begin{align}
   \Var(\cLint) = \Var\left(\sum_{i=1}^{\ell_{\lambda}}X_i\right) = \Var\left( 
   \sum_{i=1}^{\ell_{\lambda}}X_{i}\ind_{\tau_{i-2}}+ \sum_{i=1}^{\ell_{\lambda}}X_{i}\ind_{\tau_{i-2}^{\rc}}
   \right) \label{eqR}.
\end{align}

Consider $\sum_{i=1}^{\ell_\lambda} X_i'$, where we set $X_i^{\prime} := X_{i}\ind_{\tau_{i-2}}$. Observe that
   $\Cov(X_i^{\prime},X_j^{\prime}) = 0 $, if $|i-j|>4$ for large enough $\lambda$ by Lemma~\ref{lem2022}. Therefore, 
   \begin{align}
\Var\left(\sum_{i=1}^{\ell_{\lambda}}X_i^{\prime}\right)
    =\sum_{\substack{i,j=1 \\ 
    |i-j| \leq 4}}^{\ell_{\lambda}} \Cov(X_i^{\prime},X_j^{\prime}) 
   & \leq 9\ell_{\lambda} \max_{1 \leq i,j \leq \ell_\lambda} \Cov(X_i^{\prime},X_j^{\prime}) 
 \leq 9\ell_{\lambda} \max_{1 \leq i \leq \ell_\lambda }\Var(X_i^{\prime})\label{eQ},
\end{align}
where the last inequality follows from the Cauchy--Schwarz inequality.
Now observe that 
\begin{align}
    |X_i^{\prime}| \leq |\cP_\lambda \cap \Gamma_i|\frac{5}{\ell_{\lambda}}, \label{eW}
    \end{align}
    since (see proof of Lemma~\ref{lem2022}) given $\tau_{i-2}$, the nearest neighbour of any $\cP_{\lambda} \cap \Gamma_{i}$ is in $\bar{A}_i$. Here $|\cP_\lambda \cap \Gamma_i|$ is Poisson with parameter bounded by 
    \begin{align}
        \max_{1 \leq i \leq \ell_\lambda} \Exp[|\cP_\lambda \cap \Gamma_i|] \leq \frac{\lambda RS}{\ell_{\lambda}} =O\left(\lambda^{\sigma -\frac{1}{2}}\log^{\frac{5}{2}}\lambda\right), \nonumber
    \end{align}
    since $S \leq \sqrt{2c\log \lambda/ \lambda}$ for sufficiently large $\lambda$. 
   Hence, squaring~\eqref{eW} and taking expectations,  
\begin{align}
   \max_{1 \leq i \leq \ell_\lambda} \Var(X_i^{\prime}) = O \left(\lambda^{4\sigma - 3}\log^{9}\lambda\right)\label{eY},
\end{align}
as $\lambda \to \infty$. 
Hence by~\eqref{eQ} and~\eqref{eY}, recalling $\sigma \in ( \frac{1}{2}, \frac{2}{3})$, we obtain
\begin{align}\label{eD}
    \Var\left(
   \sum_{i=1}^{\ell_{\lambda}}X'_i \right) 
   = O\left (\lambda^{3\sigma-2} \log^{7}\lambda \right)
   \to 0, \text{ as } \lambda \to \infty.
\end{align}

Next consider $\sum_{i=1}^{\ell_{\lambda}}X_i\ind_{\tau_{i-2}^{\rc}}$. Using the crude bound $X_i \leq \sqrt{2}|\cP_\lambda \cap \cRint|$, we have
\begin{align*}
    \Exp\left(\left(\sum_{i=1}^{\ell_{\lambda}}X_i\ind_{\tau_{i-2}^{\rc}}\right)^{2} \right) \leq 2 \Exp\left(|\cP_\lambda \cap \cRint|^{2} \left(\sum_{i=3}^{\ell_{\lambda}}\ind_{\tau_{i-2}^{\rc}}\right)^{2} \right) \leq 2 \Exp\left(| \cP_\lambda \cap \cRint|^{2}
    \right) \ell_{\lambda}^{2}\cdot \Pr\left(\tau_{1}^{\rc}\right), 
\end{align*}
where the final inequality follows from independence of $|\cP_\lambda \cap \cRint|$ and the i.i.d.~variables $\ind_{\tau_{i-2}^{\rc}}$, $i = 3, \dots, \ell_{\lambda}$. Then, for all $\lambda$ large enough
$\Exp(|\cP_\lambda \cap \cRint|^{2}) \leq \Exp(|\cP_\lambda|^{2}) \leq 2 \lambda^{2}$
and 
$\Pr\left(\tau_{1}^{\rc}\right) \leq \exp\{-\log^{2} \lambda\} = O(\lambda^{-10})$, say. 
Combining these bounds we obtain 
\begin{align}\label{last}
    \Var\left(\sum_{i=1}^{\ell_{\lambda}}X_i\ind_{\tau_{i-2}^{\rc}} \right) = O\left(\lambda^{2} \ell_{\lambda}^{2} \lambda^{-10}\right) = O\left(\lambda^{2+2-2\sigma-10}\right) \to 0,\ \text{as}\ \lambda \to \infty. 
\end{align}
Finally, by using~\eqref{eqR} with~\eqref{eD} and~\eqref{last} and Cauchy--Schwarz, we obtain the result.
\end{proof}

\subsection{Asymptotic independence in the limit theorem}
\label{sec:asymptotic-independence-proof}

This section, dealing only with singly-aligned cones,
establishes the independence between the two
contributions in the limit result~\eqref{eq:generalresult-singly-aligned},
arising from the asymptotic independence between the random variables ${\tcLbulk}$ and ${\tcLbndry}$ in a sense we make precise below.

Recall the scale parameter $S$ defined in \eqref{eq:S-choice}, which is the reciprocal of some positive integer, $k_\lambda$, say. 
Let $\cC:= \{p=(p_1,p_2) \in \Z^{2} : 1 \leq p_1, p_2 \leq k_{\lambda}\}$ be an indexing of squares of side length~$S$ that tile   $[0,1]^2$.  Recall, from Definition~\ref{dfin-tiling-in-R^2}, the notations $T(p)$ for $p \in \Z^2$ and $T(V)$ for $V\subseteq \Z^2$.

Set
\[
\begin{split}
\cC_{1}& := \{p=(p_1,p_2) \in \Z^{2} : 1 \leq p_1 \leq k_{\lambda}, R+1 \leq p_{2} \leq  k_{\lambda}\},\\
\cC_{2} & := \{p=(p_1,p_2) \in \Z^{2} : 1 \leq p_1 \leq k_{\lambda}, 2 \leq p_{2} \leq  R\},
\end{split}
\]
and observe that $T(\cC_1) = \cRbulk$ and $T(\cC_{2}) \subseteq  \cRint$, and for all $p \in \cC \setminus (\cC_1 \cup \cC_2)$ we have $T(p) \cap \cRbndry \neq \emptyset$, using notations from Definition~\ref{def:3-regions}.

Let $N_{\lambda,p} := | \cP_\lambda \cap T(p)|$ and define
$F_\lambda := \{N_{\lambda,p} >0 \ \forall\ p \in \cC_1 \cup \cC_2\}$.
Then $N_{\lambda,p} \sim \Poi (\mu_\lambda)$ where, for all $\lambda$ large enough, $\mu_\lambda = \lambda S^2 \geq  (c/2) \log \lambda$. Then 
\[ \Pr ( F_\lambda^\rc ) \leq \Pr \left( \bigcup_{p \in \cC_1 \cup \cC_2 } \{ N_{\lambda, p} = 0 \} \right) \leq C \lambda^2 \re^{-\mu_\lambda} \to 0 ,\]
 for $c>0$ sufficiently large. 
Recall the definitions of ${\tcLbulk}$ and ${\tcLbndry}$ from Definition~\ref{allregions}.

\begin{lemma}\label{ll}
Suppose $(\theta,\phi)$ is singly-aligned.
Then ${\tcLbulk}\ind_{F_\lambda}$ and ${\tcLbndry}$ are independent.
\end{lemma}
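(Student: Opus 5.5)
The plan is to show that ${\tcLbulk}\ind_{F_\lambda}$ is a measurable function of the restriction $\cP_\lambda \cap T(\cC_1\cup\cC_2)$ alone. Meanwhile ${\tcLbndry}$ is a measurable function of $\cP_\lambda \cap T(\cC\setminus(\cC_1\cup\cC_2))$, i.e.\ of the Poisson points in the bottom row of tiles $[0,1]\times[0,S]$. Since these two regions are disjoint (up to boundaries of Lebesgue measure zero), the independence property of the Poisson process then gives the claim. The centering constants $\Exp \cLbulk$ and $\Exp\cLbndry$ are deterministic and play no role.

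\emph{Boundary side.} For $\bx \in \cRbndry$ with $\theta=\pi/2$, the cone $\cCtp(\bx)$ has one boundary ray pointing horizontally left and opens downward, both in the obtuse and acute cases. Hence $\cCtp(\bx)\cap[0,1]^2 \subseteq [0,1]\times[0,x_2]\subseteq \cRbndry$. So $\cDtp(\bx,\cP_\lambda)=\cDtp(\bx,\cP_\lambda\cap\cRbndry)$, and $\cLbndry$ is a function of $\cP_\lambda\cap\cRbndry$. Since $\lambda^{-\sigma}<S$, this set lies inside the bottom row $T(\cC\setminus(\cC_1\cup\cC_2))$.

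\emph{Bulk side.} Fix $p\in\cC_1$. By Lemma~\ref{obtuse'lemma}/\ref{acute'lemma}, $p$ satisfies either \ref{stat.1} or both \ref{stat.2} and \ref{stat.3}.

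In case \ref{stat.1}, there is $r\in C_0$ with $p+r\in\TT_S$ and $T(\cS_{p,\rho(r)})\cap\cRbndry=\emptyset$. Inspecting the proofs of those lemmas, $p_2-\rho(r)\geq 2$, so $\cS_{p,\rho(r)}\cap\TT_S\subseteq \cC_1\cup\cC_2$; this also gives $p+r\in\cC_1\cup\cC_2$. On $F_\lambda$ the tile $T(p+r)$ contains a Poisson point. Corollary~\ref{def:locall-finite-set} then shows that for each $\bx\in\cP_\lambda\cap T(p)$, $\cDtp(\bx,\cP_\lambda)=\cDtp(\bx,\cP_\lambda\cap T(\cS_{p,\rho(r)}))$, which depends only on points in $T(\cC_1\cup\cC_2)$.

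In case \ref{stat.2}--\ref{stat.3}, Lemma~\ref{Lemma443} and the argument in Lemma~\ref{acute'lemma} show that $\cCtp(\bx)\cap[0,1]^2$ misses the bottom row entirely (there $y_2\geq S$). So $\cDtp(\bx,\cP_\lambda)$ is again determined by $\cP_\lambda\cap T(\cC_1\cup\cC_2)$, and this holds without even needing $F_\lambda$.

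Summing over tiles, on $F_\lambda$ the quantity $\cLbulk$ equals a measurable functional $G(\cP_\lambda\cap T(\cC_1\cup\cC_2))$. Moreover $F_\lambda$ itself is measurable with respect to $\cP_\lambda\cap T(\cC_1\cup\cC_2)$. Hence ${\tcLbulk}\ind_{F_\lambda}=(G-\Exp\cLbulk)\ind_{F_\lambda}$ is a function of that restriction.

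The main obstacle is the bookkeeping that the ball of stabilization $\cS_{p,\rho(r)}$ avoids the \emph{entire} bottom row of tiles, not just $\cRbndry$, and that the companion tile $p+r$ lies in $\cC_1\cup\cC_2$, so that $F_\lambda$ guarantees it is occupied. I would verify both directly from the explicit values of $R$ and $\rho(r)$ in Lemmas~\ref{obtuse'lemma} and~\ref{acute'lemma}: $R\geq\floor*{h}+5$, respectively $R\geq \floor*{h}+6$, with $\rho(r)=\floor*{h}+4$, respectively $\floor*{h}+5$. Each acute-case tile near the left edge has to be checked to fall into the \ref{stat.2}/\ref{stat.3} alternative.
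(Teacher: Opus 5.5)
Your proposal is correct and is essentially the paper's proof. On $F_\lambda$, each bulk distance is fixed either by an occupied companion tile via Proposition~\ref{prop:geom} and Corollary~\ref{def:locall-finite-set} (case~\ref{stat.1}), or by a cone that misses the boundary (case~\ref{stat.3}); the boundary sum depends only on $\cP_\lambda\cap\cRbndry$, and Poisson spatial independence finishes the argument. The only difference is where you cut the square: at $y=S$ (the bottom row of tiles) rather than at $y=\lambda^{-\sigma}$. This requires your extra, valid, check that $\cS_{p,\rho(r)}$ and the acute-case cones avoid the whole bottom row, whereas the paper only needs avoidance of $\cRbndry$ and lets the bulk functional depend on all of $\cRbulk\cup\cRint$.
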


\begin{proof}
We show ${\tcLbulk}\ind_{F_\lambda}$ is determined by $\cP_\lambda \cap (\cRbulk\cup \cRint)$, whereas ${\tcLbndry}$ is determined by $\cP_\lambda \cap \cRbndry$. Since $\cRbulk\cup \cRint$ and $\cRbndry$ are disjoint, then the random variables ${\tcLbulk}\ind_{F_\lambda}$ and ${\tcLbndry}$ are independent by the spatial independence property of the Poisson process $\cP_\lambda$. 

Suppose ${F_\lambda}$ occurs, where ${F_\lambda}$ is determined by $\cP_\lambda \cap T(\cC_1 \cup \cC_2) \subseteq \cP_\lambda \cap (\cRbulk \cup \cRint)$. Lemma~\ref{obtuse'lemma} in the obtuse case and Lemma~\ref{acute'lemma} in the acute case imply that $\cRbulk = T(\cC_1)$ is a compatible bulk as in Definition~\ref{def:properties}, for the set of companions $C_{0}$ as given by Lemmas~\ref{obtuse'lemma} and~\ref{acute'lemma}.  As a consequence, we can show for any $\bx \in \cP_{\lambda} \cap \cRbulk$ that $N_{\theta,\phi}(\bx,\cP_{\lambda}) \notin \cRbndry$.  Indeed, let $p \in \cC_1$ be such that $\bx \in T(p)$; then either \ref{stat.1} or \ref{stat.3} from Definition~\ref{def:properties} is satisfied.

If $p$ satisfies~\ref{stat.1}, then there exists $r \in C_0$ with both $p+r \in \cC$ and $T( \cS_{p,\rho(r)}) \cap \cRbndry = \emptyset$.  Since $T(p+r) \subset T( \cS_{p,\rho(r)})$ we deduce that $p+r \in \cC_1 \cup \cC_2$, and since $F_\lambda$ occurs, there exists a point $\by \in \cP_\lambda \cap T(p+r)$.  Hence the conditions of Proposition~\ref{prop:geom} hold, implying that $N_{\theta,\phi}(\bx,\cP_{\lambda})$ exists and is not  in $\cRbndry$.
Otherwise, if $p$ satisfies~\ref{stat.3} then, since  
$\cDtp(\bx,\cP_\lambda)$ is determined by $\cP_{\lambda} \cap \cCtp(\bx)$,
it is determined entirely by points of $\cP_{\lambda} \cap (\cRbulk \cup \cRint)$.

Hence, on the event $F_\lambda$, the random variable 
$\cLbulk = \sum_{\bx \in \cP_\lambda \cap \cRbulk} \cDtp(\bx,\cP_\lambda)$ is a sum of random variables determined by $\cP_\lambda \cap  (\cRbulk \cup \cRint)$, and therefore ${\tcLbulk}\ind_{F_\lambda}$ is determined by $\cP_\lambda \cap  (\cRbulk \cup \cRint)$ as claimed. 

Finally, it is clear from the geometry of the cone that $N_{\theta,\phi}(\by,\cP_{\lambda}) \in [0,1]^2 \cap \cCtp(\by) \subseteq \cRbndry$ for all $\by \in \cP_{\lambda} \cap \cRbndry$, so ${\tcLbndry}$ is determined by $\cP_{\lambda} \cap \cRbndry$.
 \end{proof}

Now we have all the ingredients to
prove part~\ref{thm:generalresult-singly-aligned} of our main result Theorem~\ref{thm:generalresult}.

\begin{proof}[Proof of Theorem~\ref{thm:generalresult}\ref{thm:generalresult-singly-aligned}]
Let $Z \sim \cN(0,1)$. 
Theorem~\ref{thm:bulk-clt}
shows that $\tcLbulk \tod s_{\phi}Z$ as $\lambda \to \infty$ for some $s_{\phi} \in (0, \infty)$.
Also, ${\tcLbndry} \tod  Q_{1}$ as $\lambda \to \infty$ by Theorem~\ref{thm:boundary-effects} and 
$\tcLint \toP 0$
 by Theorem~\ref{thm:proofa}. 
We show that ${\tcLbulk}\ind_{F^\rc_\lambda} \toP 0$ as $\lambda \to \infty$. To see this, for all $\eps > 0$, we have
\begin{equation}\label{prob-0}
    \Pr(|{\tcLbulk}\ind_{F_\lambda^\rc}|>\eps) \leq \Pr(F_\lambda^\rc) \to 0, \text{ as } \lambda \to \infty. 
\end{equation}
By Slutsky's theorem we have that $ \text{as}\ \lambda \to \infty$, ${\tcLbulk}\ind_{F_\lambda} = {\tcLbulk} - {\tcLbulk}\ind_{F^\rc_\lambda} \tod  s_{\phi}Z$. By the independence from Lemma~\ref{ll}, we have
\begin{equation}
{\tcLbulk}\ind_{F_\lambda}+ {\tcLbndry} \tod  s_{\phi}Z+Q_{1}, \ \text{as}\ \lambda \to \infty \label{eqm},
\end{equation}
and the random variables $Z$ and $Q_{1}$ are independent. 
Therefore, 
\begin{align}
    \tcL_\lambda = ({\tcLbulk}\ind_{F_\lambda}+{\tcLbndry}) + ({\tcLbulk}\ind_{F_\lambda^\rc} + {\tcLint})\tod  s_{\phi}Z+Q_{1} \nonumber,
\end{align}
by Slutsky's theorem and the fact that the first bracket converges in distribution to the random variable $s_{\phi}Z+Q_{1}$ by~\eqref{eqm}, 
and the second bracket converges in probability to $0\ \text{as}\ \lambda \to \infty$ by~\eqref{prob-0} and Theorem~\ref{thm:proofa}.
\end{proof}

\section*{Acknowledgements}
\addcontentsline{toc}{section}{Acknowledgements}

Much of this work was carried out when 
QT was a PhD student at Durham University, supported by
the Cultural Bureau, Embassy of Saudi Arabia (SACB) in London, and Jazan University (KSA). 
ChatGPT helped with the proof of Lemma~\ref{lem:nadim} and with proof-reading earlier drafts.
AW was supported by EPSRC grant EP/W00657X/1.
Some of this work was carried
 out during the programme ``Stochastic systems for anomalous diffusion'' (July--December 2024) hosted by the  Isaac Newton Institute, under EPSRC grant EP/Z000580/1.

\printbibliography

\end{document}